\documentclass[12pt,twoside]{amsart}
\usepackage{geometry}
\usepackage{amsmath,amssymb,amsthm,amscd}
\usepackage{mathtools}
\usepackage{mathrsfs}
\usepackage[shortlabels]{enumitem}
\usepackage{chngcntr}

\usepackage{graphicx}
\usepackage{tabularray}
\usepackage{float}
\usepackage{hhline}
\usepackage[all]{xy}
\usepackage{tikz}
\usepackage{tikz-cd}

\usepackage{xcolor}

\setlist[enumerate]{leftmargin=56pt,labelsep=8pt,itemsep=4pt,label=\upshape{(\theequation.\arabic*)}}

\makeatletter
\renewcommand{\theequation}{%
\thesection.\arabic{equation}}
\@addtoreset{equation}{section}
\makeatother

\newtheorem{thm}{Theorem}[section]
\newtheorem{lem}[thm]{Lemma}
\newtheorem{cor}[thm]{Corollary}

\newtheorem{claim}[thm]{Claim}

\theoremstyle{definition}
\newtheorem{defn}[thm]{Definition}
\newtheorem{rem}[thm]{Remark}
\newtheorem{ex}[thm]{Example}

\newtheorem{step}{Step}
\newtheorem*{ack}{Acknowledgments}

\newcommand{\Z}{\mathbb{Z}}
\newcommand{\Q}{\mathbb{Q}}
\newcommand{\R}{\mathbb{R}}

\mathtoolsset{showonlyrefs}

\newcounter{stepcounter}
\renewcommand{\step}{%
  \refstepcounter{stepcounter}% 
  \medskip\noindent\textbf{Step \arabic{stepcounter}. }
}

\usepackage{hyperref}

\title{Abundance theorem for log surfaces} %kari
\author{Makoto Enokizono, Nao Moriyama}
\address{Graduate School of Mathematical Sciences, University of Tokyo,
3-8-1 Komaba, Meguro-ku, Tokyo 153-8914, Japan}
\email{enokizono@g.ecc.u-tokyo.ac.jp}
\address{Department of Mathematics, Graduate School of Science, Kyoto University, Kyoto 606-8502, Japan}
\email{moriyama.nao.22s@st.kyoto-u.ac.jp}
\keywords{abundance, log surface}
\subjclass[2020]{Primary 14E30; Secondary 14J17}
\date{}
\begin{document}

\begin{abstract}
We establish the abundance theorem for log surfaces without assuming $\mathbb{Q}$-factoriality or log canonicity.
\end{abstract}

\maketitle
\tableofcontents

\section{Introduction}
\subsection{Main theorem}
Let $k$ be an algebraically closed field.
A {\em log surface} $(X, \Delta)$ is a pair consisting of a normal surface $X$ over $k$ and a boundary $\mathbb{R}$-divisor $\Delta$ on $X$ such that $K_X+\Delta$ is $\mathbb{R}$-Cartier. 
In this paper, we establish the abundance theorem for log surfaces,
which provides a generalization of the abundance theorem for $\Q$-factorial log surfaces or log canonical (lc) surfaces (\cite{Fuj84}, \cite{FoMc92}, \cite{Fuj12}, \cite{Tan14}):

\begin{thm}[Abundance]\label{thm: abundance}
Let  $(X,\Delta)$ be a log surface, and $\pi\colon X\to S$ be a proper surjective morphism of varieties over $k$.
If $K_X+\Delta$ is $\pi$-nef, then it is $\pi$-semiample.
\end{thm}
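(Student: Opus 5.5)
Our plan is to reduce to the known cases, namely the $\mathbb{Q}$-factorial or log canonical abundance theorems of \cite{Fuj84}, \cite{FoMc92}, \cite{Fuj12}, \cite{Tan14}. We do this by passing to a suitable partial resolution on which the pullback of $K_X+\Delta$ is again of the form $K_Y+\Delta_Y$ with $\Delta_Y$ a boundary.

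First we reduce to the case where $\pi$ has connected fibres and $S$ is normal, using the Stein factorization. Next we take the minimal resolution $f\colon Y\to X$ and write $K_Y+\Delta_Y=f^*(K_X+\Delta)$. Here $\Delta_Y=f^{-1}_*\Delta+\sum a_iE_i$. By negativity of the minimal resolution (every exceptional curve has $K_Y\cdot E\ge 0$ and $\Delta\ge0$), all $a_i\ge 0$. The difficulty is that some $a_i$ may exceed $1$, which is exactly the non-lc case.

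To handle this, we write $\Delta_Y=\Delta_Y^{\le1}+\Delta_Y^{>1}$, where the second piece collects the coefficient excess over $1$. Set $\Gamma=\Delta_Y\wedge 1$ (coefficientwise minimum with $1$), so that $(Y,\Gamma)$ is a $\mathbb{Q}$-factorial log surface with boundary. Then $K_Y+\Delta_Y=K_Y+\Gamma+F$, where $F\ge0$ is $f$-exceptional and supported on the non-lc locus. The divisor $L:=f^*(K_X+\Delta)$ is $\pi\circ f$-nef. We then run a $(K_Y+\Gamma)$-MMP over $X$ (or over $S$); this is available on $\mathbb{Q}$-factorial surfaces, including in positive characteristic by \cite{Tan14}. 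Since $L\equiv_X 0$, each step contracts only curves along which $F$ is negative or $K_Y+\Gamma$ is negative, and these are $f$-exceptional. By the negativity lemma the MMP terminates at a model $g\colon Y'\to X$ where $F'$ is contracted as far as possible.

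We would now argue semiampleness directly. The goal is that $L|_{\operatorname{Supp}F}$ is semiample, which is trivial because $\operatorname{Supp}F$ is contracted to points of $X$ and $L\equiv_X 0$ there. Away from $\operatorname{Supp} F$, we apply the $\mathbb{Q}$-factorial abundance theorem to $(Y,\Gamma)$ with nef divisor $K_Y+\Gamma+F$. The cleanest route is a Keel-type / Fujita-type gluing statement: on a surface, a nef divisor $L$ is semiample if its restriction to the exceptional (augmented base) locus is semiample. In characteristic $p$ we would use Keel's theorem; in characteristic $0$ we would use the analogous argument via abundance for the lc pair, obtained by perturbing coefficients.

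Concretely, the dichotomy goes by numerical dimension. If $\nu(L)=2$, then $L$ is big, and $L-(K_Y+\Gamma)=F$ is effective. Semiampleness then follows from the basepoint-free theorem for $\mathbb{Q}$-factorial surfaces (\cite{Fuj12}, \cite{Tan14}) once the stable base locus is shown to lie in $\operatorname{Supp}F$, combined with the restriction argument above. If $\nu(L)\le1$, then $L\cdot F=0$ forces $L$ to be trivial near $F$. In that case we replace $F$ by $0$: we show that $K_Y+\Gamma$ is itself nef and numerically equivalent to $L$ up to exceptional curves disjoint from the relevant fibration. Abundance for $(Y,\Gamma)$ then applies.

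The main obstacle will be the big case in positive characteristic and the bookkeeping that $F$ does not obstruct the basepoint-free argument. We expect to settle this with Keel's criterion (semiampleness can be checked on the exceptional locus $\mathbb{E}(L)$ over finite fields), together with a separate lifting argument over general $k$.
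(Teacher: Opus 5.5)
There is a genuine gap: the reduction to the $\mathbb{Q}$-factorial boundary case never actually happens.

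\textbf{The reduction and the non-big step.} Since $L\equiv_X 0$, your $(K_Y+\Gamma)$-MMP over $X$ is a $(-F)$-MMP over $X$, and it does not remove $F$. For instance, if $F=cE$ is supported on one curve, then $E$ is never contracted, because $F_i\cdot E_i=cE_i^2<0$ at every stage. So you always end with $L=K_{Y'}+\Gamma'+F'$ with $F'\neq 0$. Abundance for the $\mathbb{Q}$-factorial pair $(Y',\Gamma')$ is a statement about $K_{Y'}+\Gamma'$, not about $L$. The basepoint-free theorem does not apply either: $mL-(K_Y+\Gamma)=(m-1)L+F$ is negative on every component $C$ of $F$ with $F\cdot C<0$.

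Your non-big step is false. For $F\neq 0$ one has $F^2<0$, so $K_Y+\Gamma\not\equiv L$, and truncating to $\Gamma$ can even destroy pseudo-effectivity. Take the configuration in the proof of Lemma~\ref{lem: non-vanishing 2}: $Y$ irrational ruled and the only horizontal part of $\Delta_Y$ equal to $2T$, with $T$ an $f$-exceptional section. Then $(K_Y+\Gamma)\cdot(\text{fibre})=-1$.

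Your justification that $L|_{\operatorname{Supp}F}$ is semiample ``because $L\equiv_X 0$'' is also wrong. In Example~\ref{ex: Mor26pre}, where $K_X$ is not $\mathbb{Q}$-Cartier, $L=K_Y+2T_1+E_1\sim\varphi^*p^*\mathfrak{e}$ is $\rho$-numerically trivial, yet $L|_{T_1}$ is non-torsion.

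Even the correct statement, that $L$ is torsion on the \emph{reduced} exceptional locus, is not enough. Consider the configuration excluded at the end of the proof of Theorem~\ref{thm: log CY}: $e=0$, $\mathcal{E}$ indecomposable, $\operatorname{char}k=0$, $\Delta_{Y_0}=(\alpha+2)T_{\mathcal{E}}$ with $\alpha>0$. There $L$ is nef, $L\not\equiv 0$, $\kappa(L)=0$, and $L|_{\widetilde{T}_{\mathcal{E}}}$ is trivial. The paper rules this out only by showing, via Lemma~\ref{lem: 1st order thickening}, that $L$ is non-trivial on a first-order thickening of the $\rho$-exceptional locus (the divisor $T_2$ there). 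That contradicts the triviality of $L$ near the exceptional locus forced by $\mathbb{Q}$-Cartierness of $K_X+\Delta$.

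Nothing in your sketch sees infinitesimal neighbourhoods of the exceptional locus. This is the content of Sections~\ref{sec: non-vanishing}--\ref{sec: non-big case}:
\begin{itemize}
\item non-vanishing, via a blow-down to a $\mathbb{P}^1$-bundle with $a_i\le\alpha/2$;
\item the elliptic ruled analysis for $\kappa=0$;
\item a direct treatment of the relative non-big case over a curve, since no MMP reduces it to the absolute case.
\end{itemize}

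\textbf{The big case.} The gluing principle you invoke is Keel's criterion. It holds over fields of positive characteristic but fails in characteristic $0$; Keel gives a counterexample. Your characteristic-$0$ substitute does not help, since any perturbation to an lc pair changes $L$ by a non-zero exceptional divisor.

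In characteristic $p$, Keel only reduces you to showing that $L|_{\mathbb{E}(L)}$ is semiample. Besides $\operatorname{Exc}(f)$, the locus $\mathbb{E}(L)$ contains the strict transforms of all $(K_X+\Delta)$-trivial curves, and proving that $L$ is torsion there is the real work. Your claim that the stable base locus lies in $\operatorname{Supp}F$ is asserted, not proved.

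The paper works on $X$ itself:
\begin{itemize}
\item It uses Enokizono's vanishing theorem for $\mathbb{Z}$-Zariski decompositions (Corollary~\ref{cor: vanishing}), which tolerates the non-nef part that Kawamata--Viehweg cannot.
\item It uses adjunction for reduced curves (Lemma~\ref{lem: adjunction}) to get $\mathcal{O}_X(m(K_X+\Delta))|_{\Delta_\lambda}\cong\mathcal{O}_{\Delta_\lambda}$ on connected components of $M$-trivial lc places.
\item Together these show that no non-klt center lies in $\operatorname{Bs}|m(K_X+\Delta)/S|$.
\item Fujino's lc-threshold argument with general members then shrinks the base locus by Noetherian induction.
\end{itemize}

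Finally, you tacitly assume $\mathbb{Q}$-coefficients. The $\mathbb{R}$-case needs the Shokurov polytope reduction of Section~\ref{sec: R-div}.
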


The abundance conjecture predicts that, for a projective log pair $(X, \Delta)$ with mild singularities, such as klt or lc, nefness of the log canonical divisor $K_X+\Delta$ implies its semiampleness. 
It is one of the central and most challenging problems in the birational geometry of algebraic varieties. 
In dimension at most three, the conjecture is known to hold for lc pairs in characteristic zero (\cite{KMM94}).

For surfaces, the abundance theorem has a long history. 
Fujita \cite{Fuj84} established the semiampleness of the positive parts of Zariski decompositions of log canonical divisors on smooth projective log surfaces. 
In particular, his result implies the abundance theorem for lc surfaces after passing to a minimal resolution.
Subsequently, the abundance theorem was incorporated into the framework of the minimal model program (MMP).
In characteristic zero, Fong and M\textsuperscript{c}Kernan \cite{FoMc92} gave another proof of the abundance theorem for lc surfaces as part of the proof of the three-dimensional abundance theorem.
Fujino \cite{Fuj12} later developed the minimal model theory for $\mathbb{Q}$-factorial log surfaces and proved the abundance theorem for $\mathbb{Q}$-factorial log surfaces, without assuming that the pair is lc.
In positive characteristic, Tanaka \cite{Tan14} established the MMP and the abundance theorem for $\Q$-factorial log surfaces and lc surfaces, using Keel's basepoint-free theorem.

However, for log surfaces without imposing conditions such as $\mathbb{Q}$-factoriality or log canonicity, the MMP does not run in general.
In fact, there are examples where the MMP and the non-vanishing theorem fail (\cite{Mor26pre}, see Example~\ref{ex: Mor26pre}).
Despite this failure, the abundance theorem nevertheless remains valid in this setting. 
Although we also consider proper log surfaces, the abundance statement in Theorem~\ref{thm: abundance} is nontrivial even when $X$ is projective or when $\Delta=0$.

\subsection{The strategy of the proof}
The strategy of the proof of Theorem~\ref{thm: abundance} is as follows:

Using the method of Shokurov's polytope as in {\cite[Section~3]{Bir11}}, we may assume that $\Delta$ is a boundary $\Q$-divisor (Section \ref{sec: R-div}).

First, we prove the non-vanishing theorem under the assumption that $K_{X}+\Delta$ is nef (Theorem~\ref{thm: non-vanishing}).
Let $\rho\colon Y\to X$ be the minimal resolution and let $\Delta_{Y}$ be an effective $\Q$-divisor on $Y$ defined as
$$
K_{Y}+\Delta_{Y}=\rho^{*}(K_X+\Delta).
$$
By Riemann--Roch and the classification of smooth projective surfaces, it suffices to consider the case where $Y$ is an irrational ruled surface.
After taking a suitable minimal model $Y\to Y_0$ (Lemma~\ref{claim: replace blow-down seq.}), we use the $\mathbb{P}^1$-bundle structure $Y_0\to B$ and the $\Q$-Cartierness of $K_X+\Delta$ to show that $K_{Y}+\Delta_{Y}$ is $\Q$-linearly equivalent to an effective $\Q$-divisor.
Since we do not assume that $X$ is $\Q$-factorial or $(X, \Delta)$ is lc, $\Delta_{Y}$ may contain an irreducible component with multiplicity greater than one that is horizontal over $B$.
 
Next, we prove the abundance theorem in the case where $\kappa(X, K_{X}+\Delta)=0$ (Theorem~\ref{thm: log CY}).
Following the argument in the proof of {\cite[Theorem~3.34]{Tan14}}, 
we may reduce to the case where $Y$ is an elliptic ruled surface.
Using the description of the effectivity of $K_Y+\Delta_{Y}$ (Lemmas~\ref{lem: non-vanishing 1} and \ref{lem: non-vanishing 2}),
we show that $K_{Y}+\Delta_{Y}$ is $\Q$-linearly equivalent to the nonnegative multiple of the pullback of $-K_{Y_0}$.
The Iitaka dimension of $-K_{Y_0}$ for a $\mathbb{P}^1$-bundle $Y_0$ over an elliptic curve $B$ is well understood (Lemma~\ref{lem: elliptic ruled}).
Thus, if $K_Y+\Delta_Y\not\sim_{\Q} 0$, then we have
$$
\kappa(-K_{Y_0})=\kappa(K_Y+\Delta_{Y})=0.
$$
Using the classification of such $Y_0$ and the $\Q$-Cartierness of $K_X+\Delta$, we then derive a contradiction.

The abundance theorem for the case $\kappa(K_X+\Delta)=1$ was already established in {\cite[Theorem~4.1]{Fuj84}}.
Thus, the abundance theorem in the absolute and non-big case follows.

We next complete the proof in the non-big case by considering the relative setting where $S$ is a curve (Theorem~\ref{lem: abandance non-big}).
We note that the arguments in both \cite{Fuj12} and \cite{Tan14} reduce the relative setting to the absolute one by applying the MMP for $\Q$-factorial or lc log surfaces.
Since this reduction is unavailable in our setting, the relative case must be treated directly.
Our argument for the relative setting, given in Section \ref{sec: non-big case} is inspired by \cite{Mor26}, which treats the corresponding problem in the analytic setting. 

Finally, we treat the big case, handling the absolute and relative settings simultaneously.
We note that the arguments in both \cite{Fuj12} and \cite{Tan14} rely on the MMP for $\Q$-factorial or lc log surfaces.
The argument in \cite{Fuj12} uses a vanishing theorem in characteristic $0$, whereas that in \cite{Tan14} uses Keel's basepoint-free theorem.
Our proof follows the overall strategy of \cite{Fuj12}, but replaces the MMP-based arguments by the vanishing theorems for normal surfaces established in \cite{Eno23} and \cite{Eno24} (see Corollary~\ref{cor: vanishing}), together with a key adjunction result (Lemma~\ref{lem: adjunction}).

\subsection{Notations and convention}
We work over an algebraically closed field $k$ of arbitrary characteristic throughout this paper.
We collect the following terminologies, which we will use in this paper.

\begin{itemize}
    \item
    A {\em variety} over $k$ refers to an irreducible and reduced scheme that is separated and of finite type over $k$.
    A {\em surface} over $k$ means a variety of dimension $2$.
    \item
    An $\mathbb{R}$-divisor $\Delta$ on a normal variety $X$ is called {\em boundary} if any irreducible component of $\Delta$ has multiplicity in $[0,1]$.
    \item
    We denote by $\sim$ (resp.\ $\sim_{\Q}$, $\equiv$) the linear (resp.\ $\Q$-linear, numerical) equivalence of $\mathbb{R}$-divisors.
    \item
    Let $\pi\colon X\to S$ be a proper surjective morphism.
    An $\mathbb{R}$-divisor $D$ on $X$ is called {\em $\pi$-exceptional} if any irreducible component of $D$ maps to a point by $\pi$ (this is not a standard definition).
    \item
    Let $X$ be a normal proper variety and let $D$ be a $\Q$-Cartier $\Q$-divisor on $X$. 
    Then, the {\it Iitaka dimension} of $D$ is denoted by $\kappa(X,D)$ or $\kappa(D)$.
    \item
    We freely use Mumford's intersection theory and some numerical properties (e.g., nef, big, pseudo-effective) of Weil divisors on normal surfaces (see {\cite[Appendix~A]{Eno24}}).
\end{itemize}

\begin{ack}
The first author is supported by JSPS KAKENHI No.25K06926.
The second author is supported by JSPS KAKENHI No.26KJ1528.
The authors would like to express their sincere gratitude to Professor Osamu Fujino for his continuous guidance to the second author, as well as for his fruitful discussions. We also thank Professor Kenta Hashizume and Professor Hiromu Tanaka for their valuable comments and helpful suggestions.
\end{ack}

\section{Non-vanishing}\label{sec: non-vanishing}

In this section, we prove the non-vanishing theorem for log minimal surfaces: 
\begin{thm}[Non-vanishing]\label{thm: non-vanishing}
Let $X$ be a normal proper surface over $k$, and $\Delta$ be a boundary $\mathbb{Q}$-divisor on $X$ such that $K_X+\Delta$ is $\mathbb{Q}$-Cartier.
If $K_X+\Delta$ is nef, then there exists an effective $\mathbb{Q}$-divisor $D$ such that $K_X+\Delta\sim_{\mathbb{Q}} D$.
\end{thm}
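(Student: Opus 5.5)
We pass to the minimal resolution $\rho\colon Y\to X$ and write $K_Y+\Delta_Y=\rho^*(K_X+\Delta)$. Since $Y$ is minimal over $X$, $K_Y$ is $\rho$-nef, so by the negativity lemma $\Delta_Y$ is effective. It need not be a boundary: exceptional curves, and possibly non-exceptional ones, may appear with coefficient $>1$. Set $L=K_Y+\Delta_Y$. Then $L$ is nef, and it suffices to find $m>0$ with $H^0(Y,mL)\neq0$, since pushing forward gives the effective $D$ on $X$. Write $\nu$ for the numerical dimension of $L$.

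\textbf{Easy cases.} We split according to the classification of $Y$.
\begin{itemize}
\item If $L^2>0$, Riemann--Roch gives $h^0(mL)+h^2(mL)\ge \tfrac12 m^2L^2+O(m)$. By Serre duality $h^2(mL)=h^0(K_Y-mL)=h^0(-(m-1)K_Y-m\Delta_Y)$. This vanishes for $m\gg0$: $L$ is nef and big, and $(K_Y-mL)\cdot L<0$ for large $m$.
\item If $\kappa(Y)\ge0$, some multiple of $K_Y$ is effective, so $mL=mK_Y+m\Delta_Y$ is effective as a Weil divisor. Taking $m$ divisible enough that $m\Delta_Y$ is integral finishes this case.
\item If $Y$ is rational, then $\chi(\mathcal O_Y)=1$. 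Riemann--Roch gives $\chi(mL)=1+\tfrac12 mL\cdot(mL-K_Y)$, and $L\cdot(mL-K_Y)=(m-1)L^2+L\cdot\Delta_Y\ge0$. As before $h^2(mL)=0$ for $m\ge2$, unless $L\equiv0$ and $\Delta_Y=0$. In that remaining case $K_Y\equiv0$ on a rational surface, which is impossible. Hence $h^0(mL)\ge1$.
\end{itemize}

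\textbf{Main case.} What remains is $Y$ an irrational ruled surface over a curve $B$ of genus $g\ge1$ with $L^2=0$. This is the main obstacle, because $\chi(\mathcal O_Y)=1-g\le0$ and Riemann--Roch gives nothing. We choose a sequence of blow-downs $Y\to Y_0$ to a $\mathbb P^1$-bundle $p\colon Y_0\to B$, picked so that the curves contracted are compatible with the $\rho$-exceptional configuration; the lemma on replacing blow-down sequences alluded to in the introduction should handle this. Let $F$ be a fibre and write $\Delta_Y=\sum a_iC_i$.

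\textbf{Horizontal degree.} We compute $L\cdot F=-2+\sum a_i(C_i\cdot F)\ge0$, so the horizontal part of $\Delta_Y$ has degree $\ge2$ over $B$.
\begin{itemize}
\item If $L\cdot F>0$, then $L$ is nef, positive on fibres, and $L^2=0$. In that situation I would push $L$ down to $Y_0$ and use the structure of $\mathrm{Num}(Y_0)=\Z C_0\oplus\Z F$ to find an effective representative. The pushforward of $L$ to $Y_0$ is $K_{Y_0}+\Gamma$ plus effective exceptional images.
\item If $L\cdot F=0$, then $L$ is numerically a multiple of a sum of fibres plus vertical components. Nefness with $L^2=0$ should force $L\equiv cF$ modulo vertical pieces supported in fibres.
\end{itemize}

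\textbf{Using $\Q$-Cartierness.} The point of this step is to upgrade numerical to $\Q$-linear equivalence. The hypothesis that $K_X+\Delta$ is $\Q$-Cartier means $L$ is trivial on every $\rho$-exceptional curve. If $X$ contracts a curve dominating $B$, for instance the negative section $C_0$ with $C_0^2=-e<0$, then $L|_{C_0}\sim_{\Q}0$ as a line bundle. This is finer than degree zero, because $X$ is $\Q$-Cartier and not merely numerically so. Restricting to that section identifies the class of $L$ in $\mathrm{Pic}(Y_0)\otimes\Q\cong p^*\mathrm{Pic}(B)_{\Q}\oplus\Q C_0$. This pins down the $p^*\mathrm{Pic}(B)$-part up to torsion, and shows $L\sim_{\Q}aC_0+p^*N$ with $\deg N\ge0$ and $N$ effective up to $\Q$-linear equivalence. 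Note that $N$ is $\Q$-effective when its degree is positive, or when it is torsion and hence zero in $\Q$-coefficients.

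If no horizontal curve is contracted, then $\Delta_Y$'s horizontal components are honest boundary components with coefficients $\le1$ and degree $\ge2$. In that case $K_{Y_0}+\Gamma_0$ with $\Gamma_0$ horizontal of degree $2$ is computed directly: for two sections, or a bisection, one obtains $p^*$ of a degree $\ge0$ class, made effective as above.

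I expect this linear-equivalence bookkeeping on the $\mathbb P^1$-bundle, especially when horizontal components have coefficient $>1$, to be the crux of the proof.
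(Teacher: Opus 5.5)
Your preliminary reductions are correct. These are the negativity lemma, the case $L^2>0$ via Riemann--Roch, and the case $\kappa(Y)\ge0$. In the rational case, your direct observation that $h^2(mL)\neq0$ forces $\Delta_Y=0$ and $K_Y\equiv0$ is a valid substitute for the paper's ``$-D$ is pseudo-effective, so $D\equiv0\sim_{\Q}0$''. But the irrational ruled case, which is the whole content of the theorem, is not proved, and the sketch misses the mechanism that makes it work.

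\textbf{First gap: returning from $Y_0$ to $Y$.} Write $L_i$ for the push-forward of $L$ to $Y_i$. Nefness gives $L=\varphi^*L_0-\sum_i a_i\mathbb{E}_i$ with $a_i=L_i\cdot E_i\ge0$. So a $\Q$-effective representative of $L_0=K_{Y_0}+\Delta_{Y_0}$ says nothing about $L$; the same goes for your ``$aC_0+p^*N$'', which is a class on $Y_0$, not on $Y$. Also, $L^2=0$ only gives $L_0^2=\sum a_i^2$, not $a_i=0$. The missing device is the choice of blow-down sequence in Lemma~\ref{claim: replace blow-down seq.}. At each step one contracts a $(-1)$-curve of a reducible fibre with $L$-degree at most $\alpha/2$, where $\alpha=L\cdot F$. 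This is possible because either some $(-1)$-curve has multiplicity $\ge2$ in the fibre, or there are at least two of multiplicity one. Then $a_i\le\alpha/2$, and Lemma~\ref{lem: non-vanishing 1} gives $L\ge\varphi^*\bigl(K_{Y_0}+\tfrac{2}{\alpha+2}\Delta_{Y_0}\bigr)$. This reduces everything to the fibre-degree-zero class $K_{Y_0}+\tfrac{2}{\alpha+2}\Delta_{Y_0}\sim_{\Q}p^*(\text{class on }B)$. ``Compatible with the $\rho$-exceptional configuration'' is not the property you need.

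\textbf{Second gap: the degree-zero case on $B$.} This is exactly where non-vanishing can fail, and your claims there do not follow.

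If no section of $p$ in $\Delta_{Y_0}$ has $\rho$-exceptional proper transform, the horizontal part of $\tfrac{2}{\alpha+2}\Delta_{Y_0}$ is an arbitrary boundary $\Q$-divisor of fibre degree $2$, not just ``two sections or a bisection''. Ruling out a non-torsion degree-zero class there is the content of \cite[Theorem~2.2]{Fuj84}; ``made effective as above'' fails precisely in that case.

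If such a section $C$ exists and $\alpha=0$, your restriction idea can be made to work. All $a_i=0$ because $L$ is trivial on vertical curves, and torsion descends along $C\to B$ (cf.\ \cite[Lemma~2.3]{Fuj84}).

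For $\alpha>0$, however, writing $L_0\sim_{\Q}aC+p^*N$ and restricting gives only $N\sim_{\Q}-a\,(C|_C)$, and only if no blown-up point lies on $C$. For $C=T_1$ on $\mathbb{P}_B(\mathcal{O}_B\oplus\mathcal{O}_B(\mathfrak e))$ this is $-a\mathfrak e$, which need not be torsion. So ``$N$ is $\Q$-effective'' is not established. The paper proceeds in two steps:
\begin{itemize}
\item Assuming $\deg(\mathfrak e+K_B+\tfrac{2}{\alpha+2}\mathfrak d)\le0$ (otherwise one is done), it uses $L_0\cdot T_{\mathcal E}\ge0$ and $L_0^2\ge0$ to force $g_B=1$, $e\in\{0,-1\}$ and $L_0^2=0$. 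Hence all $a_i=0$, and there is no negative section at all, so your ``$C_0^2=-e<0$'' scenario never arises.
\item It then uses Atiyah's classification to pin down $\operatorname{Supp}\Delta_{Y_0}$. For $e=-1$, no component is a section, a contradiction. For $e=0$ indecomposable, every component is $\Q$-linearly proportional to $T_{\mathcal E}$. For $e=0$ decomposable, $\Delta_{Y_0}=b_1T_1+b_2T_2$ with $b_1\ge(\alpha+2)/2>1$, so $T_1$ is contracted and restriction yields $(b_1-1)\mathfrak e\sim_{\Q}0$.
\end{itemize}
This second relation, coming from the support of $\Delta_{Y_0}$, is what your restriction step lacks.
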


\begin{rem}
Theorem~\ref{thm: non-vanishing} fails if we drop any of the assumptions that $K_X+\Delta$ is nef, that $K_X+\Delta$ is $\mathbb{Q}$-Cartier, or that $\Delta$ is a boundary (see Example~\ref{ex: Mor26pre}).  
\end{rem}

Let $X$ be a normal proper surface over $k$, and $\Delta$ be a boundary $\mathbb{Q}$-divisor on $X$ such that $K_X+\Delta$ is $\mathbb{Q}$-Cartier and nef.
Let $\rho\colon Y\to X$ be the minimal resolution of $X$.
Note that $Y$ is a smooth projective surface.
Put 
$$
D:=K_Y+\Delta_Y=\rho^*(K_X+\Delta).
$$
To prove Theorem~\ref{thm: non-vanishing}, it suffices to show that $\kappa(Y, D) \ge 0$. 
By the negativity lemma {\cite[Lemma~3.39]{KoMo98}}, the $\Q$-divisor $\Delta_Y$ is effective.
Thus, $\kappa(Y, D) \ge \kappa(Y, K_Y)$.
Then the assertion is immediate if $\kappa(Y, K_Y) \ge 0$.
We may therefore assume that $\kappa(Y, K_Y) = -\infty$.
By the classification of smooth projective surfaces, $Y$ is either a rational surface or an irrational ruled surface.

We first consider the case where $Y$ is a rational surface.
Let $m$ be a positive integer such that $mD$ is an integral divisor.
By the Riemann--Roch theorem for surfaces,
\begin{align*}
h^0(Y, m D) + h^0(Y, K_Y - m D) 
&\ge \frac{1}{2} m D \cdot (m D - K_Y) + \chi(\mathcal{O}_Y)\\
&=\frac{1}{2} m D \cdot ((m-1)D+\Delta_Y) + 1.
\end{align*}
Since $D$ is nef and $\Delta_Y$ is effective, we get
$$
h^0(Y, m D) + h^0(Y, K_Y - m D) \ge 1.
$$
If $h^0(Y, K_Y - m D) = 0$ for some such $m$, then $h^{0}(Y, mD)>0$ and hence $\kappa(Y, D) \ge 0$.
Thus, we may assume that $h^0(Y, K_Y - m D) > 0$ for all $m>0$ such that $mD$ is integral.
In particular, $(K_Y-mD)\cdot H \ge 0$ for any nef divisor $H$ on $Y$ and $m\gg 0$.
This implies that $-(D\cdot H) \ge 0$ for every nef divisor $H$.
Thus, $-D$ is pseudo-effective.
Since $D$ is nef and $-D$ is pseudo-effective, it follows that $D \equiv 0$.
Since $Y$ is a rational surface, numerical and $\mathbb{Q}$-linear equivalence coincide for $\Q$-divisors.
Therefore, $D\sim_\Q 0$,
and in particular $\kappa(Y, D) \ge 0$.

Next we assume that $Y$ is an irrational ruled surface.
Applying the $K_Y$-MMP to $Y$, we obtain a sequence of blow-downs
$$
\varphi\colon
Y=Y_N\xrightarrow{\varphi_N}
Y_{N-1}\xrightarrow{\varphi_{N-1}}
\cdots
\xrightarrow{\varphi_2}
Y_1\xrightarrow{\varphi_1}
Y_0,
$$
where each $\varphi_i\colon Y_i\to Y_{i-1}$ is the blow-up at a point $P_i\in Y_{i-1}$.
The resulting surface $Y_0$ has the structure of  a $\mathbb{P}^1$-bundle $p\colon Y_0\to B$ over a smooth curve $B$ of genus $g_B\ge 1$.
For each $i=0,\cdots, N$, let $E_i:=\varphi_i^{-1}(P_i)$ be the exceptional $(-1)$-curve of $\varphi_{i}$.
Let $D_i=K_{Y_i}+\Delta_{Y_i}$ be the push-forward of $D=K_Y+\Delta_Y$ to $Y_i$.
Since $D_i$ is nef, we can write
$$
D_i=\varphi_i^*D_{i-1}-a_iE_i
$$
for some $a_i\in\Q_{\ge 0}$.
Let $\Gamma_i$ be a fiber of the projection $Y_i\to B$ and put
$$
\alpha:=D_i \cdot \Gamma_i\ge 0.
$$
Note that $\alpha$ is independent of both the choice of $i$ and  the choice of the fiber $\Gamma_{i}$.

\begin{lem}\label{claim: replace blow-down seq.}
After suitably replacing the blow-down sequence $\varphi$, we may assume that 
$a_i \le \frac{\alpha}{2}$ for each $i$.
\end{lem}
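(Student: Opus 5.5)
The plan is to choose the blow-down sequence $\varphi$ extremally and then show that any violation $a_j>\frac{\alpha}{2}$ can be removed by an elementary transformation of the $\mathbb{P}^1$-bundle, which strictly decreases the chosen invariant.

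\textbf{Setting up the invariant.} Write $E_i^*\subset Y$ for the total transform of $E_i$ under $Y\to Y_i$. Then
$$D=\varphi^*D_0-\sum_i a_iE_i^*,\qquad (E_i^*)^2=-1,\qquad E_i^*\cdot E_j^*=0\ (i\neq j),\qquad E_i^*\cdot\varphi^*D_0=0,$$
so $a_i=D\cdot E_i^*$. Blow-ups at distinct points of the same surface commute. Hence the multiset $\{a_i\}$ depends only on the configuration of (infinitely near) points, and we may reorder the $P_i$ freely as long as each infinitely near point still comes after the point it lies over. If $P_j$ lies over the fiber $F\subset Y_0$, then $\varphi^*F-E_j^*$ is effective. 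Since $D$ is nef, this gives $0\le a_j\le D\cdot\varphi^*F=\alpha$. The $a_i$ have denominators bounded by those of $D$, so they lie in a discrete subset of $[0,\alpha]$. Among all sequences of contractions of $(-1)$-curves from $Y$ to some $\mathbb{P}^1$-bundle over $B$ (there are such, and every one has exactly $N$ steps), choose one minimizing $\sum_i a_i$; the minimum exists by discreteness. I claim this choice satisfies $a_i\le\frac{\alpha}{2}$ for all $i$.

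\textbf{The elementary transformation.} Suppose first that a violation occurs at a point $P$ that is the first point blown up on its fiber $F\subset Y_0$. Reorder so that $P=P_1$. In $Y_1$ the fiber over $p(P_1)$ is $F'+E_1$, where the strict transform $F'$ is a $(-1)$-curve. We have
$$D_1\cdot E_1=a_1,\qquad D_1\cdot F'=\alpha-a_1.$$
Contract $F'$ instead, obtaining $\psi\colon Y_1\to Y_0'$, another $\mathbb{P}^1$-bundle over $B$. Then $D_1=\psi^*D_0'-(\alpha-a_1)F'$, so the new coefficient is $a_1'=\alpha-a_1<\frac{\alpha}{2}<a_1$. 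The later blow-ups are unchanged, since points of $Y_1$ are the same whichever curve we contract, so $a_i'=a_i$ for $i\ge2$. The sum strictly decreases, contradicting minimality.

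\textbf{Main obstacle: violations at later points.} The difficulty is a violation $a_j>\frac{\alpha}{2}$ where $P_j$ is not the first point over its fiber. I would distinguish two cases.
\begin{itemize}
\item If $P_j$ is infinitely near some earlier point $P_i$ over that fiber, then $E_i^*-E_j^*$ is effective, so $a_i\ge a_j>\frac{\alpha}{2}$. Inductively along the chain of infinitely near points, this pushes the violation down to a point lying directly on the strict transform of $F$.
\item If $P_j$ lies on the strict transform of $F$ but not on any earlier exceptional curve, it is a point of $Y_0$ itself. We can then reorder so that it is blown up first, reducing to the case treated above.
\end{itemize}
The delicate part is checking that the reordering in the second case is legitimate. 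One must verify that $P_j$ is not infinitely near any point blown up earlier, and that the points blown up after $P_j$ still correspond to points on the new model. I would verify this by tracking the dual graph of the fiber of $Y\to B$ over $p(P_j)$, which is a tree of rational curves. The point is that both $Y_0$ and $Y_0'$ are obtained from the same surface $Y_1$, so later blow-ups are unaffected. Once the reordering is justified, minimality forces $a_i\le\frac{\alpha}{2}$ for all $i$.
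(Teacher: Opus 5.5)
Your argument is correct, but it takes a different route from the paper.

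\textbf{The paper's route.} The paper uses a greedy descending induction. At each stage it contracts a $(-1)$-curve $E$ in a reducible fiber $\Gamma_i$ of $Y_i\to B$ with $D_i\cdot E\le\frac{\alpha}{2}$. Such an $E$ exists because either some $(-1)$-curve has multiplicity at least two in $\Gamma_i$, so $2D_i\cdot E\le D_i\cdot\Gamma_i=\alpha$, or all $(-1)$-curves in $\Gamma_i$ have multiplicity one. In the latter case there are at least two of them, and their $D_i$-degrees sum to at most $\alpha$.

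\textbf{Your route.} You make a global extremal choice instead. You minimize $\sum_i a_i$, which equals $D\cdot K_Y-D_0\cdot K_{Y_0}$ since $K_Y=\varphi^*K_{Y_0}+\sum_i E_i^*$, over all blow-down sequences. Violations are then removed using two observations:
\begin{itemize}
\item monotonicity $a_i\ge a_j$ along chains of infinitely near points, which follows from effectivity of $E_i^*-E_j^*$ and nefness of $D$;
\item the elementary transformation $a\mapsto\alpha-a$ at a genuine point of $Y_0$, which leaves the other $a_i$ unchanged.
\end{itemize}

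\textbf{What each buys.} The paper's argument is constructive and never reorders blow-ups. However, it relies on the classical structural fact about degenerate fibers of ruled surfaces: a reducible fiber whose $(-1)$-curves all have multiplicity one contains at least two of them. Your argument avoids that fact. It also gives a variational characterization of a good sequence: any $\varphi$ maximizing $D_0\cdot K_{Y_0}$ works. The cost is the bookkeeping needed to reorder centers.

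\textbf{The step you flag as delicate.} It is standard, so no dual-graph analysis is needed. If $Q\in Y_0$ is a center of $\varphi$ that is an actual point of $Y_0$, i.e.\ a fundamental point of $\varphi^{-1}$, then $\varphi$ factors through the blow-up of $Y_0$ at $Q$ (see e.g.\ {\cite[V, \S5]{Har77}}). This is exactly the reordering you need. The remaining centers then become points of that blow-up, infinitely near or not, and the total transforms $E_i^*$ on $Y$ are unaffected.
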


\begin{proof}
We proceed by descending induction on $i$ starting from $i=N$.
Assume that $a_j\le \frac{\alpha}{2}$ for all $j=i+1, \cdots, N$.
If $i=0$, then there is nothing to prove.
When $i>0$, we will replace $\varphi_i$ so that $a_i \le \frac{\alpha}{2}$.
Since $Y_i$ is obtained from the $\mathbb{P}^1$-bundle $p\colon Y_0\to B$ by blowing up points on fibers, the ruling $p_i \colon Y_i \to B$ has a reducible fiber $\Gamma_i$ that contains at least one $(-1)$-curves.

First suppose that there exists a $(-1)$-curve $E$ contained in $\Gamma_i$ with the multiplicity at least two.
Since $D_i$ is nef and $2E\le \Gamma_{i}$, we have
$$
D_i\cdot E\le \frac{D_i\cdot \Gamma_i}{2}=\frac{\alpha}{2}.
$$
Thus, replacing $\varphi_i$ with the contraction of $E$, we have $a_i=D_i\cdot E\le\frac{\alpha}{2}$.

We may therefore assume that every $(-1)$-curve contained in $\Gamma_{i}$ has multiplicity one.
In this case, $\Gamma_i$ contains at least two $(-1)$-curves.
Thus, there exists a $(-1)$-curve $E$ in $\Gamma_i$ satisfying 
$$
D_i\cdot E\le \frac{D_i\cdot \Gamma_i}{2}=\frac{\alpha}{2}.
$$
Replacing $\varphi_i$ with the contraction of $E$, we get $a_i=D_i\cdot E\le\frac{\alpha}{2}$.
\end{proof}

From now on, we suppose that $a_i \le \frac{\alpha}{2}$.
To prove Theorem~\ref{thm: non-vanishing}, we propose the following two lemmas:

\begin{lem} \label{lem: non-vanishing 1}
The following holds.
$$
\frac{\alpha}{\alpha+2}\Delta_Y\le \frac{\alpha}{\alpha+2} \varphi^*\Delta_{Y_0}-\sum_{i=1}^N a_i \mathbb{E}_i,
$$
where the divisor $\mathbb{E}_i$ is the total transform of $E_i$ on $Y$.
\end{lem}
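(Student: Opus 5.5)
The plan is to rewrite both sides in terms of $\varphi^*\Delta_{Y_0}$ and the total transforms $\mathbb{E}_i$. The inequality then reduces coefficientwise to $a_i\le\frac{\alpha}{2}$, which is exactly what Lemma~\ref{claim: replace blow-down seq.} provides.

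\emph{Expansion of $D$.} For each $i$ write $\psi_i\colon Y\to Y_i$ for the composite $\varphi_{i+1}\circ\cdots\circ\varphi_N$, so that $\mathbb{E}_i=\psi_i^*E_i$. By hypothesis $D_i=\varphi_i^*D_{i-1}-a_iE_i$. Pulling this back by $\psi_i$ and inducting on $i$ (using $\psi_{i-1}=\varphi_i\circ\psi_i$) gives
$$
D=\varphi^*D_0-\sum_{i=1}^N a_i\mathbb{E}_i .
$$
\emph{Expansion of $K_Y$.} Each $\varphi_i$ is the blow-up of a smooth point, so $K_{Y_i}=\varphi_i^*K_{Y_{i-1}}+E_i$. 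Pulling back in the same way gives
$$
K_Y=\varphi^*K_{Y_0}+\sum_{i=1}^N\mathbb{E}_i .
$$

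\emph{Expansion of $\Delta_Y$.} Since $D_0=K_{Y_0}+\Delta_{Y_0}$ is the push-forward of $D$, subtracting the two expansions yields
$$
\Delta_Y=D-K_Y=\varphi^*\Delta_{Y_0}-\sum_{i=1}^N(a_i+1)\mathbb{E}_i .
$$
Multiplying by $\frac{\alpha}{\alpha+2}$, which is well defined and nonnegative because $\alpha\ge 0$, the difference of the right-hand side and the left-hand side of the claimed inequality is
$$
\Bigl(\tfrac{\alpha}{\alpha+2}\varphi^*\Delta_{Y_0}-\sum_i a_i\mathbb{E}_i\Bigr)-\tfrac{\alpha}{\alpha+2}\Delta_Y
=\sum_{i=1}^N\Bigl(\tfrac{\alpha(a_i+1)}{\alpha+2}-a_i\Bigr)\mathbb{E}_i
=\sum_{i=1}^N\frac{\alpha-2a_i}{\alpha+2}\,\mathbb{E}_i .
$$

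\emph{Conclusion.} Each $\mathbb{E}_i$ is an effective divisor, being the total transform of a curve. By Lemma~\ref{claim: replace blow-down seq.} every coefficient $\frac{\alpha-2a_i}{\alpha+2}$ is nonnegative. Hence the difference above is effective, which is the assertion of Lemma~\ref{lem: non-vanishing 1}.

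The only point requiring care is the bookkeeping of the first step. One must verify that the coefficients $a_i$, which were defined on the intermediate surfaces $Y_i$, reappear unchanged as coefficients of the total transforms $\mathbb{E}_i$ on $Y$. This holds because pulling back commutes with composition, so no genuine obstacle is expected.
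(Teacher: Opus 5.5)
Your proposal is correct and is essentially the paper's proof. The paper likewise uses $\varphi^*\Delta_{Y_0}-\Delta_Y=(K_Y-\varphi^*K_{Y_0})-(D-\varphi^*D_0)=\sum_{i}(a_i+1)\mathbb{E}_i$, so the difference of the two sides is $\sum_i\frac{\alpha-2a_i}{\alpha+2}\mathbb{E}_i$, which is effective because $a_i\le\frac{\alpha}{2}$ by Lemma~\ref{claim: replace blow-down seq.}.
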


\begin{proof}
Since $a_i \le \frac{\alpha}{2}$, we have
\begin{align*}
&\frac{\alpha}{\alpha+2}\varphi^*\Delta_{Y_0}-\sum_{i=1}^N a_i \mathbb{E}_i-\frac{\alpha}{\alpha+2}\Delta_Y\\
&=\frac{\alpha}{\alpha+2}((K_Y-\varphi^{*}K_{Y_0})-(D-\varphi^*D_0))-\sum_{i=1}^N a_i \mathbb{E}_i\\
&=\frac{\alpha}{\alpha+2}\sum_{i=1}^N(a_i+1)\mathbb{E}_i-\sum_{i=1}^N a_i \mathbb{E}_i\\
&=\sum_{i=1}^N\frac{\alpha-2a_i}{\alpha+2}\mathbb{E}_i\\
&\ge 0.
\end{align*}
\end{proof}

\begin{lem} \label{lem: non-vanishing 2}
$\kappa(Y_0, K_{Y_0}+\frac{2}{\alpha+2}\Delta_{Y_0})\ge 0$.
\end{lem}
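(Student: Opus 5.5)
The plan is to exploit the fact that $L:=K_{Y_0}+\frac{2}{\alpha+2}\Delta_{Y_0}$ has degree zero on the fibres of $p\colon Y_0\to B$. I will compute its numerical class in the standard basis of $\operatorname{Num}(Y_0)$, and then upgrade numerical effectivity to $\Q$-effectivity using $\operatorname{Pic}(Y_0)=\Z C_0\oplus p^*\operatorname{Pic}(B)$.

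\textbf{Fibre degree and reduction to a pullback.} We have $D_0\cdot\Gamma_0=\alpha$ and $K_{Y_0}\cdot\Gamma_0=-2$, so $\Delta_{Y_0}\cdot\Gamma_0=\alpha+2$. Hence $L\cdot\Gamma_0=0$. Fix a section $C_0$ with $C_0^2=-e$ minimal, and let $f$ be a fibre, so that $K_{Y_0}\equiv -2C_0+(2g_B-2-e)f$.

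Decompose $\Delta_{Y_0}=H+V$, where $H$ is the horizontal part and $V$ the vertical part. $V$ is a nonnegative combination of fibres, so $V\sim_{\Q}p^*(\text{effective})$. The horizontal part satisfies $\frac{2}{\alpha+2}H\equiv 2C_0+\beta f$ for some $\beta\in\Q$. Since $L\cdot f=0$ and $\operatorname{Pic}(Y_0)=\Z C_0\oplus p^*\operatorname{Pic}(B)$, the divisor $L$ is $\Q$-linearly equivalent to $p^*M$ for a $\Q$-divisor $M$ on $B$ with
$$
\deg M=2g_B-2-e+\beta+\tfrac{2}{\alpha+2}\deg(V).
$$
So $\kappa(Y_0,L)\ge0$ as soon as $\deg M>0$, or $\deg M=0$ and $M$ is torsion.

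\textbf{Lower bound for $\beta$.} For $\beta$ I will use Hartshorne's description of irreducible curves on ruled surfaces. For an irreducible curve $C\equiv aC_0+bf$ with $a\ge1$ and $C\neq C_0$, one has $b\ge ae$ when $e\ge0$, and $b\ge ae/2$ when $e<0$.

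\begin{itemize}
\item When $e<0$, this gives $\beta\ge e$ directly.
\item When $e\ge0$, the component $C_0$ itself (where $b=0<e$ if $e>0$) must be controlled separately. For this I use nefness of $D_0$: writing $\Delta_{Y_0}=tC_0+(\text{rest})$, the inequality $D_0\cdot C_0\ge0$ and the adjunction $(K_{Y_0}+C_0)\cdot C_0=2g_B-2$ bound how much of $H$ can sit on $C_0$. Since the coefficient $\frac{2}{\alpha+2}$ scales $H$ down to fibre degree $2$, this bound should force $2g_B-2-e+\beta\ge 2g_B-2$.
\end{itemize}

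Granting this, $\deg M\ge 2g_B-2$. For $g_B\ge2$ this gives $\deg M>0$, hence $\kappa(Y_0,L)\ge0$.

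\textbf{Main obstacle: $g_B=1$ with $\deg M=0$.} Here $L\equiv0$, and one must rule out $M$ being a non-torsion degree-zero class. Equality throughout forces $V=0$ and equality in the Hartshorne bounds. So every horizontal component of $H$ is extremal: a curve with $C\cdot C_0$ minimal, for instance $C_0$ or a section disjoint from $C_0$ in the decomposable $e=0$ case, or a curve in the minimal class when $e=-1$. In each of these configurations I will compute $K_{Y_0}+\frac{2}{\alpha+2}H$ in $\operatorname{Pic}$ rather than in $\operatorname{Num}$.

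\begin{itemize}
\item Decomposable case, $Y_0=\mathbb{P}(\mathcal{O}\oplus\mathcal{L})$ with sections $C_0,C_1$: one has $K_{Y_0}+C_0+C_1\sim0$. The extremal curves are then forced to be $C_0,C_1$ or members of the pencils $|C_0|$, $|C_0+p^*\mathcal{L}|$.
\item Indecomposable cases: the extremal curves lie in classes $\Q$-linearly equivalent to a fixed multiple of $-K_{Y_0}$, since $-K_{Y_0}$ is effective on an elliptic ruled surface and $2C_0-K_{Y_0}$ is numerically trivial up to torsion.
\end{itemize}

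I would try to show that in each case $\frac{2}{\alpha+2}H\sim_{\Q}-K_{Y_0}$, so that $L\sim_{\Q}0$. This case analysis, which uses the explicit structure of $\mathbb{P}^1$-bundles over elliptic curves, is where the real work lies. If it fails in some decomposable case with non-torsion $\mathcal{L}$, the fallback is to bring in the nefness of $D_0$ along $C_0$ and $C_1$ to exclude that configuration.
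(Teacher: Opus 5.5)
Your proposal has a genuine gap. It only uses three things: $\Delta_{Y_0}\ge 0$, that $D_0=K_{Y_0}+\Delta_{Y_0}$ is nef with $D_0\cdot\Gamma_0=\alpha$, and the structure of $\operatorname{Pic}(Y_0)$. It never uses that $K_X+\Delta$ is $\Q$-Cartier or that $\Delta$ is a boundary, and with only those ingredients the statement is false.

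The first place this bites is your bound $2g_B-2-e+\beta\ge 2g_B-2$ for $e\ge 0$. If the negative section $C_0$ has coefficient $t>\frac{\alpha+2}{2}$ in $\Delta_{Y_0}$, Hartshorne's inequality only gives $\beta\ge e\bigl(2-\frac{2t}{\alpha+2}\bigr)<e$, and nefness of $D_0$ does not repair this. For instance:
\begin{itemize}
\item Let $B$ have genus $2$, $Y_0=\mathbb{P}_B(\mathcal{O}_B\oplus\mathcal{O}_B(-K_B+\mathfrak{n}))$ with $\deg\mathfrak{n}=0$, and $\Delta_{Y_0}=2C_0$. Then $e=2$, $\alpha=0$, $\beta=0$, $D_0\sim p^*\mathfrak{n}$ is nef, and $\deg M=0<2g_B-2$.
\item For $\mathfrak{n}=0$ this is the minimal resolution of the cone over $(B,K_B)$ with $\Delta=0$ and $K_X\sim 0$, so it really occurs in the setting of the lemma.
\item For $\mathfrak{n}$ non-torsion, every hypothesis you invoke still holds, yet $\kappa(Y_0,K_{Y_0}+\Delta_{Y_0})=-\infty$.
\end{itemize}
So your $g_B\ge 2$ branch cannot be right as stated. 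What nefness actually yields (through $D_0^2\ge 0$, as in the paper) is only $\deg M\ge\frac{\alpha}{\alpha+2}(2g_B-2)$. Hence the degree-zero case is not confined to $g_B=1$: it occurs for every $g_B$ when $\alpha=0$.

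Your fallback in the elliptic case fails for the same reason. Take $\mathbb{P}_B(\mathcal{O}_B\oplus\mathcal{O}_B(\mathfrak{e}))$ with $\mathfrak{e}$ non-torsion of degree $0$ and $\Delta_{Y_0}=(\alpha+2)T_1$. Then $D_0\equiv\alpha T_1$ is nef with $D_0\cdot T_1=D_0\cdot T_2=0$, but $K_{Y_0}+\frac{2}{\alpha+2}\Delta_{Y_0}\sim p^*\mathfrak{e}$.

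The missing idea is precisely the case the paper isolates. A multisection of degree $\ge 2$ has coefficient at most $\frac{\alpha+2}{2}$ in $\Delta_{Y_0}$. A curve whose proper transform is not $\rho$-exceptional keeps its coefficient $\le 1$ from $\Delta$. So the horizontal part of $\frac{2}{\alpha+2}\Delta_{Y_0}$ can fail to be a boundary only because of a section $T\le\Delta_{Y_0}$ whose proper transform $\widetilde T$ on $Y$ is $\rho$-exceptional.

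Without such a section, your Hartshorne bound is valid, and the rest of your plan is essentially what the paper obtains by citing Fujita's Theorem~2.2. With such a section, the argument runs as follows:
\begin{itemize}
\item First, $a_i=0$ for all $i$. This is immediate from $0\le a_i\le\alpha/2$ if $\alpha=0$. If $\alpha>0$ and $\deg M\le 0$, it follows from $D_0^2=0$ and the nefness of $D$.
\item Hence $K_Y+\Delta_Y=\varphi^*D_0$.
\item Because $K_X+\Delta$ is $\Q$-Cartier, $\rho^*(K_X+\Delta)|_{\widetilde T}$ is torsion, and therefore so is $D_0|_T$.
\item When $\alpha=0$, this forces $M$ to be torsion.
\item In the decomposable non-torsion elliptic case it gives $(b_1-1)\mathfrak{e}\sim_{\Q}0$ with $b_1>1$, a contradiction.
\item In the degree-zero case with $e=-1$, no section can lie in $\Delta_{Y_0}$ at all.
\end{itemize}
Your $\operatorname{Pic}$-level case analysis only becomes effective once this $\Q$-Cartier input is in place.
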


\begin{proof}
If there are no sections of $p$ contained in $\Delta_{Y_0}$ the proper transform of which is $\rho$-exceptional, 
the claim follows from {\cite[Theorem~2.2]{Fuj84}}.
Indeed, in this case, the horizontal part $\frac{2}{\alpha+2}(\Delta_{Y_0})_{\mathrm{hor}}$ of $\frac{2}{\alpha+2}\Delta_{Y_0}$ is boundary since so is $\Delta$.
Hence {\cite[Theorem~2.2]{Fuj84}} can be applied to $K_{Y_0}+\frac{2}{\alpha+2}(\Delta_{Y_0})_{\mathrm{hor}}$.
Thus, we may assume that there exists a section of $p$ in $\Delta_{Y_0}$ the proper transform of which is $\rho$-exceptional.

Now we use the following terminology.
We fix a rank-two vector bundle $\mathcal{E}$ with $Y_0\cong \mathbb{P}_{B}(\mathcal{E})$ that is normalized in the sense of {\cite[V, Notation~2.8.1]{Har77}}.
Let $T_{\mathcal{E}}$ be the minimal section satisfying $\mathcal{O}_{Y_0}(T_{\mathcal{E}})\cong \mathcal{O}_{\mathbb{P}_{B}(\mathcal{E})}(1)$.
Let $\mathfrak{e}$ be a divisor on $B$ such that $\mathcal{O}_{B}(\mathfrak{e})\cong \det \mathcal{E}$.
Then, we have
$$
K_{Y_0} \sim -2T_\mathcal{E}+p^*(K_B+\mathfrak{e}).
$$
Let $\mathfrak{d}$ be a $\mathbb{Q}$-divisor on $B$ such that
$$
\Delta_{Y_0} \sim_\Q (\alpha+2)T_\mathcal{E}+p^*\mathfrak{d}.
$$
Then we can write
$$
K_{Y_0}+\frac{2}{\alpha+2}\Delta_{Y_0}\sim_\Q p^*(\mathfrak{e}+K_B+\frac{2}{\alpha+2}\mathfrak{d}).
$$
We put
$$
e:=-\deg \mathcal{E}=-\deg \mathfrak{e},\quad d:=\deg \mathfrak{d}.
$$
To prove the claim, we may assume
\begin{equation} \label{eq: condition 1}
-e+2g_B-2+\frac{2}{\alpha+2}d=\deg(\mathfrak{e}+K_B+\frac{2}{\alpha+2}\mathfrak{d})\le 0
\end{equation}
since otherwise the claim trivially follows.
Since $K_{Y_0}+\Delta_{Y_0}$ is nef, we have
\begin{equation}\label{eq: condition 3}
0\le (K_{Y_0}+\Delta_{Y_0})\cdot T_{\mathcal{E}}=-(\alpha+1)e+2g_B-2+d.
\end{equation}

First we assume that $\alpha=0$.
Thus, it follows from \eqref{eq: condition 1} and \eqref{eq: condition 3} that $-e+2g_B-2+d=0$, that is, 
 $K_{Y_0}+\Delta_{Y_0}\sim_{\Q}p^{*}\mathfrak{d}_0$ for some $\Q$-divisor $\mathfrak{d}_0$ on $B$  of degree $0$.
Since $a_i=0$, the $\Q$-divisor class $K_Y+\Delta_{Y}$ is also the pullback of $\mathfrak{d}_0$ to $Y$.
We take a $\rho$-exceptional curve $C$ contained in $\Delta_{Y}$ that is dominant onto $B$.
Since $K_X+\Delta$ is $\Q$-Cartier and $K_Y+\Delta_{Y}=\rho^{*}(K_X+\Delta)$, the restriction $(K_Y+\Delta_{Y})|_{C}\sim_{\Q}\mathfrak{d}_0|_{C}$ is torsion.
Thus $\mathfrak{d}_0$ is also torsion by {\cite[Lemma 2.3]{Fuj84}} and the claim holds.

Thus we may assume that $\alpha>0$.
Since $K_{Y_0}+\Delta_{Y_0}$ is nef, we have
$$
0\le (K_{Y_0}+\Delta_{Y_0})^{2}=-\alpha^{2}e+2\alpha(-e+2g_B-2+d).
$$
Dividing this by $2\alpha$, we have
\begin{equation*} 
-\frac{\alpha+2}{2}e+2g_B-2+d\ge 0.
\end{equation*}
Combining this with \eqref{eq: condition 1} implies 
\begin{align*}
0 &\ge -e+2g_B-2+\frac{2}{\alpha+2}d \\
&\ge -e+2g_B-2+\frac{2}{\alpha+2}\left(\frac{\alpha+2}{2}e-2g_B+2\right) \\
&=\frac{\alpha}{\alpha+2}(2g_B-2) \\
&\ge 0,
\end{align*}
where the last inequality follows from $g_B\ge 1$.
Thus we conclude that
$$
g_B=1,\quad d=\frac{\alpha+2}{2}e, \quad (K_{Y_0}+\Delta_{Y_0})^{2}=0
$$ 
and $K_{Y_{0}}+\frac{2}{\alpha+2}\Delta_{Y_0}\sim_{\Q}p^{*}\mathfrak{d}_0$ for some $\Q$-divisor $\mathfrak{d}_0$ on $B$ of degree $0$.
Moreover, \eqref{eq: condition 3} and $e\ge -1$ ({\cite[V, Theorems 2.12, 2.15]{Har77}}) imply that $e=0$ or $-1$.

Suppose that $e=0$ and $\mathcal{E}$ is decomposable, so that $\mathcal{E}\cong \mathcal{O}_{B}\oplus \mathcal{O}_{B}(\mathfrak{e})$.
Since $d=0$ and $\Delta_{Y_0}$ is effective, we have $\mathfrak{d}\sim_{\Q}a\mathfrak{e}$ for some $a\in \Q$.
Indeed, for a divisor $\mathfrak{b}$ on $B$ and a positive integer $m$, we have
\begin{align*}
H^{0}(Y_0, mT_{\mathcal{E}}+p^{*}\mathfrak{b})&\cong H^{0}(B, \mathrm{Sym}^{m} \mathcal{E}\otimes \mathcal{O}_{B}(\mathfrak{b})) \\
&\cong 
 H^{0}(B, \mathcal{O}_{B}(\mathfrak{b}))\oplus H^{0}(B, \mathcal{O}_{B}(\mathfrak{b}+\mathfrak{e}))\oplus \cdots \oplus H^{0}(B, \mathcal{O}_{B}(\mathfrak{b}+m \mathfrak{e})).
\end{align*}
Thus, if $\deg \mathfrak{b}\le 0$, the linear system $|mT_{\mathcal{E}}+p^{*}\mathfrak{b}|$ is non-empty if and only if $\mathfrak{b}+ a\mathfrak{e}\sim 0$ for some $0\le a\le m$.
This proves $\mathfrak{d}\sim_{\Q}a\mathfrak{e}$ for some $a\in \Q$.
It follows that $K_{Y_{0}}+\frac{2}{\alpha+2}\Delta_{Y_0}\sim_{\Q}\left(1+\frac{2a}{\alpha+2}\right)p^{*}\mathfrak{e}$.
If $\mathfrak{e}$ is torsion, then the right-hand side is $\Q$-linearly trivial, and the desired conclusion holds.
We may therefore assume, for a contradiction, that $\mathfrak{e}$ is not torsion.
Let $T_1=T_{\mathcal{E}}$ and $T_2\sim T_{\mathcal{E}}-p^{*}\mathfrak{e}$ respectively denote the sections of $p$ corresponding to the quotient maps $\mathcal{E}\to \mathcal{O}_{B}(\mathfrak{e})$ and $\mathcal{E}\to \mathcal{O}_{B}$.
These are are the only integral curves on $Y_0$ whose numerical classes are proportional to $T_{\mathcal{E}}$.
Thus, we can write $\Delta_{Y_0}=b_1T_1+b_2T_2$ for some $b_1, b_2\in \Q_{\ge 0}$ with $b_1+b_2=\alpha+2$.
Replacing a normalized vector bundle $\mathcal{E}$  with  $\mathcal{E}\otimes \mathcal{O}_{B}(-\mathfrak{e})$ if necessary, we may assume that $b_1\ge b_2$.
In particular, $b_1\ge (\alpha+2)/2>1$.
Since $\Delta$ is a boundary divisor, it follows that the proper transform $\widetilde{T}_1$ of $T_1$ on $Y$ is $\rho$-exceptional.
Thus, the restriction $(K_Y+\Delta_{Y})|_{\widetilde{T}_1}$ is $\Q$-linearly trivial.
On the other hand, since $K_Y+\Delta_{Y}=\varphi^{*}(K_{Y_0}+\Delta_{Y_0})$ and $K_{Y_0}+\Delta_{Y_0}\sim_{\Q}\alpha T_{\mathcal{E}}+(1-b_2)p^{*}\mathfrak{e}$,
we conclude that the restriction 
$$
(\alpha T_{\mathcal{E}}+(1-b_2)p^{*}\mathfrak{e})|_{T_{\mathcal{E}}}\sim_{\Q}(b_1-1)p^{*}\mathfrak{e}|_{T_{\mathcal{E}}}
$$
 is $\Q$-linearly trivial, where we use $T_{\mathcal{E}}|_{T_{\mathcal{E}}}\sim p^{*}\mathfrak{e}|_{T_{\mathcal{E}}}$ and $b_1+b_2=\alpha+2$.
Since $b_1>1$, we have $\mathfrak{e}\sim_{\Q} 0$ that contradicts the assumption.

Suppose that $e=0$ and $\mathcal{E}$ is indecomposable.
By Atiyah's classification, $\mathcal{E}$ is a non-trivial extension of $\mathcal{O}_{B}$ by $\mathcal{O}_{B}$ and $\mathfrak{e}\sim 0$.
Since $d=0$ and $\Delta_{Y_0}$ is effective, we have $\mathfrak{d}\sim_{\Q}0$.
Indeed, let $C$ be an integral curve contained in the linear system $|mT_{\mathcal{E}}+p^{*}\mathfrak{b}|$,
 where $m>0$ and $\mathfrak{b}$ is a divisor on $B$ with $\deg \mathfrak{b}\le 0$.
 Then either $C=T_{\mathcal{E}}$ or $C\cap T_{\mathcal{E}}=\emptyset$ (the latter possibility can occur only in positive characteristic).
In the latter case, restricting to $T_{\mathcal{E}}$ gives $\mathfrak{b}\sim C|_{T_{\mathcal{E}}}=0$.
Thus, in either case, each component of $\Delta_{Y_0}$ is $\Q$-linearly proportional to $T_{\mathcal{E}}$.
Consequently, $K_{Y_{0}}+\frac{2}{\alpha+2}\Delta_{Y_0}\sim_{\Q}0$.

Suppose that $e=-1$.
In this case, $\mathcal{E}$ is indecomposable, $d=-(\alpha+2)/2$ and 
$$
\Delta_{Y_0}\equiv \frac{\alpha+2}{2}(2T_{\mathcal{E}}-\Gamma)
$$
where $\Gamma$ is the numerical equivalence class of a fiber of $p$.
Note that $(2T_{\mathcal{E}}-\Gamma)^2=0$.
Since $\Delta_{Y_0}$ is numerically proportional to $2T_{\mathcal{E}}-\Gamma$, it follows from {\cite[V, Proposition~2.21]{Har77}} that each irreducible component of $\Delta_{Y_0}$ is also numerically proportional to $2T_{\mathcal{E}}-\Gamma$.
In particular, $\Delta_{Y_0}$ has no sections of $p$.
It is a contradiction since we assume that $\Delta_{Y_0}$ has at least one sections.
\end{proof}

\begin{proof}[Proof of Theorem \ref{thm: non-vanishing}]
By Lemma~\ref{lem: non-vanishing 1}, we have
\begin{align*}
K_Y+\Delta_Y &=\varphi^*(K_{Y_0}+\Delta_{Y_0})-\sum_{i=1}^N a_i \mathbb{E}_i\\
&=\varphi^*\left(K_{Y_0}+\frac{2}{\alpha+2}\Delta_{Y_0}\right)+\varphi^*\left(\frac{\alpha}{\alpha+2}\Delta_{Y_0}\right)-\sum_{i=1}^N a_i \mathbb{E}_i\\
&\ge \varphi^*\left(K_{Y_0}+\frac{2}{\alpha+2}\Delta_{Y_0}\right)+\frac{\alpha}{\alpha+2}\Delta_Y\\
&\ge \varphi^*\left(K_{Y_0}+\frac{2}{\alpha+2}\Delta_{Y_0}\right).
\end{align*}
Thus the claim follows from Lemma~\ref{lem: non-vanishing 2}.
\end{proof}

\section{Log Calabi--Yau case}\label{sec: dim=0 and kappa=0}
In this section, we prove the abundance theorem in the case where $\kappa(X, K_{X}+\Delta)=0$:

\begin{thm} \label{thm: log CY}
Let $X$ be a normal proper surface over $k$, and let $\Delta$ be a boundary $\Q$-divisor on $X$
such that $K_X+\Delta$ is $\Q$-Cartier.
Assume that $K_{X}+\Delta$ is  nef and $\kappa(X, K_{X}+\Delta)=0$.
Then, $K_{X}+\Delta\sim_{\Q}0$.
\end{thm}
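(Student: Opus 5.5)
We work throughout on the minimal resolution $\rho\colon Y\to X$ with $D=K_Y+\Delta_Y=\rho^*(K_X+\Delta)$, as in Section~\ref{sec: non-vanishing}. By Theorem~\ref{thm: non-vanishing} and $\kappa=0$, there is a unique effective $\Q$-divisor $D'\sim_\Q D$; write $D'=\sum c_jC_j$. It suffices to show $D'=0$, i.e.\ $D\equiv 0$ together with torsionness.

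\textbf{Step 1: reduction to $D\equiv 0$ and the elliptic ruled case.} Following \cite[Theorem~3.34]{Tan14}, we first show $D\equiv 0$. Since $D$ is nef and $D\cdot D'=D^2\ge 0$, we get $D^2=D\cdot D'=\sum c_j D\cdot C_j$, with each term $\ge 0$. If $D^2>0$, then $D$ is big and $\kappa=2$, a contradiction, so $D^2=0$ and $D\cdot C_j=0$ for all $j$. By the Hodge index theorem, the support of $D'$ is then negative semidefinite, and by Zariski's lemma $D'$ is either negative definite or a multiple of a fibre-type configuration. In the negative definite case, $D'^2=D^2=0$ forces $D'=0$. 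In the remaining case one argues, as in Tanaka, that $D'$ is supported on fibres of a fibration with $D'$ proportional to a fibre, which would give $\kappa\ge 1$, a contradiction. Hence $D\equiv 0$.

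Now if $Y$ is rational, $D\equiv 0$ gives $D\sim_\Q 0$. If $\kappa(Y,K_Y)\ge 0$, then $0\le K_Y\le D$ numerically forces $\Delta_Y=0$ and $K_Y\equiv 0$, and abundance for smooth surfaces gives $K_Y\sim_\Q 0$. So we reduce to $Y$ irrational ruled. Since $\alpha=D\cdot\Gamma=0$, the equality analysis in Lemma~\ref{lem: non-vanishing 2} gives $K_{Y_0}+\Delta_{Y_0}\equiv 0$. Using $-e+2g_B-2+d=0$ with $d\ge$ (the contribution of $\Delta_{Y_0}\equiv 2T+\cdots$) and $e\ge -(g_B)$ when $g_B\ge 2$, this forces $g_B=1$. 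So $Y_0$ is an elliptic ruled surface and $\Delta_{Y_0}\equiv -K_{Y_0}$.

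\textbf{Step 2: $D\sim_\Q$ a multiple of $\varphi^*(-K_{Y_0})$.} From Lemma~\ref{lem: non-vanishing 1} with $\alpha=0$, all $a_i=0$, so $D=\varphi^*(K_{Y_0}+\Delta_{Y_0})$. We then analyse $\Delta_{Y_0}\equiv -K_{Y_0}$ via the case split $e=0$ decomposable, $e=0$ indecomposable, $e=-1$, exactly as in Lemma~\ref{lem: non-vanishing 2}. In each case every component of $\Delta_{Y_0}$ is numerically proportional to a specific class, and the linear-system computations there show $K_{Y_0}+\Delta_{Y_0}\sim_\Q p^*\mathfrak{d}_0$ with $\deg\mathfrak{d}_0=0$, or (when $-K_{Y_0}$ is semiample-type) $\sim_\Q 0$ directly. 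Equivalently, $D\sim_\Q \lambda\varphi^*(-K_{Y_0})+(\text{degree-zero pullback})$.

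\textbf{Step 3: the torsion argument (main obstacle).} If $\mathfrak{d}_0$ is torsion we are done. Otherwise we want a contradiction via $\Q$-Cartierness. We will use Lemma~\ref{lem: elliptic ruled}-type facts: $\kappa(-K_{Y_0})=0$ happens only for $Y_0=\mathbb{P}(\mathcal{O}\oplus\mathcal{L})$ with $\mathcal{L}$ non-torsion of degree $0$, where the only curves of class proportional to $T$ are $T_1,T_2$. Then $\Delta_{Y_0}=b_1T_1+b_2T_2$ with $b_1+b_2=2$. The hardest point is when $b_1=b_2=1$, so that no coefficient exceeds $1$ and we cannot immediately conclude that some $\widetilde T_i$ is $\rho$-exceptional. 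The approach is as follows. If some $\widetilde T_i$ is $\rho$-exceptional, restricting $D$ to it gives $\mathfrak{e}$ torsion as in Lemma~\ref{lem: non-vanishing 2}, a contradiction. If neither is exceptional, then $K_{Y_0}+T_1+T_2\sim 0$ exactly, so $D\sim_\Q 0$ already. In the other indecomposable cases, $-K_{Y_0}$ has $\kappa\ge 1$ (Atiyah), and then $\mathfrak{d}_0$ must be torsion because $\Delta_{Y_0}$ is effective, which gives $D\sim_\Q 0$.
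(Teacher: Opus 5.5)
\textbf{There is a genuine gap in Step~1, and it is exactly where the content of the theorem lies.}

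You assert that a nef $D$ with $\kappa(D)=0$ is numerically trivial. Your reason is that a negative semidefinite, not negative definite, configuration $D'$ with $D'\cdot C_j=0$ must be proportional to a fibre of some fibration. Zariski's lemma gives no such conclusion, and the claim is false on elliptic ruled surfaces. Two examples:
\begin{itemize}
\item If $\operatorname{char}k=0$ and $Y_0=\mathbb{P}_B(\mathcal{E})$ with $\mathcal{E}$ the non-split self-extension of $\mathcal{O}_B$, then $K_{Y_0}+3T_{\mathcal{E}}\sim T_{\mathcal{E}}$. This divisor is nef with $\kappa=0$, since $h^0(B,\operatorname{Sym}^m\mathcal{E})=1$, yet it has degree $1$ on fibres.
\item Similarly, $K_{Y_0}+2T_1+T_2\sim T_1$ when $\mathcal{E}=\mathcal{O}_B\oplus\mathcal{O}_B(\mathfrak{e})$ with $\mathfrak{e}$ non-torsion.
\end{itemize}
Your Step~1 never uses that $\Delta$ is a boundary or that $K_X+\Delta$ is $\Q$-Cartier, so it cannot exclude these examples; as written it would prove a false statement.

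Step~3 also has Atiyah backwards. In characteristic $0$ the indecomposable $e=0$ case has $\kappa(-K_{Y_0})=0$ (Lemma~\ref{lem: elliptic ruled}), not $\kappa\ge 1$. So $\kappa(-K_{Y_0})=0$ does not occur only in the decomposable case.

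\textbf{Your Steps~2--3 only cover the easy regime.} In the paper's notation, $D\equiv 0$ means $\alpha=D\cdot\Gamma=0$, and that case needs no work. Then $a_i=0$, $D\sim_\Q\varphi^*p^*\mathfrak{d}_0$ with $\deg\mathfrak{d}_0=0$, and Theorem~\ref{thm: non-vanishing} forces $\mathfrak{d}_0$ to be torsion.

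\textbf{The paper's proof is devoted to excluding $\alpha>0$.} From $\kappa=0$ and Lemmas~\ref{lem: non-vanishing 1} and \ref{lem: non-vanishing 2} one gets $\frac{2}{\alpha+2}\Delta_{Y_0}\sim_\Q-K_{Y_0}$. Hence $(K_{Y_0}+\Delta_{Y_0})^2=0$, all $a_i=0$, and $\kappa(-K_{Y_0})=0$. Two cases remain.
\begin{itemize}
\item If $\mathcal{E}$ is decomposable with $\mathfrak{e}$ non-torsion, then $\Delta_{Y_0}=b_1T_1+b_2T_2$ with $b_1=b_2=(\alpha+2)/2>1$. The boundary hypothesis makes $\widetilde{T}_1$ $\rho$-exceptional, and restricting $\rho^*(K_X+\Delta)$ to it forces $\mathfrak{e}$ to be torsion, a contradiction.
\item If $\mathcal{E}$ is indecomposable (characteristic $0$), then $\Delta_{Y_0}=(\alpha+2)T_{\mathcal{E}}$ and $\mathcal{O}(T_{\mathcal{E}})|_{T_{\mathcal{E}}}$ is trivial, so restricting to the reduced curve $\widetilde{T}_{\mathcal{E}}$ detects nothing. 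The paper instead builds a chain-connected $\rho$-exceptional divisor (the $T_1$ of the Claim) that agrees with $\varphi^*T_{\mathcal{E}}$ near $\widetilde{T}_{\mathcal{E}}$. Using Lemma~\ref{lem: 1st order thickening}, it shows that $\varphi^*\mathcal{O}_{Y_0}(m\alpha T_{\mathcal{E}})$, restricted to the first-order thickening $T_1+\widetilde{T}_{\mathcal{E}}$, is the image of $m\alpha\xi\neq 0$ under an injective $H^1(\exp)$. Here $\xi$ is the extension class of $\mathcal{E}$. This contradicts $\Q$-Cartierness.
\end{itemize}
Your proposal is missing an argument of this kind, one that uses both the boundary and the $\Q$-Cartier hypotheses in an essential way to rule out $\alpha>0$.
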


Before proving this, we prepare two lemmas:
\begin{lem} \label{lem: elliptic ruled}
Let $p\colon \mathbb{P}_B(\mathcal{E})\to B$ be a geometrically ruled
surface over an elliptic curve $B$,
where $\mathcal{E}$ is normalized in the sense of {\cite[V, Notation~2.8.1]{Har77}}, $\det \mathcal{E}=\mathcal{O}_{B}(\mathfrak{e})$ and $e:=-\deg \mathcal{E}$.
Then
\[
\kappa\left(\mathbb{P}_B(\mathcal{E}), -K_{\mathbb{P}_B(\mathcal{E})}\right)=
\begin{cases}
2, & e>0,\\
1, & e=-1,\\
1, & e=0\text{ and $\mathcal{E}$ is decomposable with
      $\mathfrak e$ torsion}, \\
1, & e=0\text{ and $\mathcal{E}$ is indecomposable in characteristic $p>0$,} \\
0, & e=0\text{ and $\mathcal{E}$ is decomposable with
      $\mathfrak e$ non-torsion}, \\
       0, & e=0\text{ and $\mathcal{E}$ is indecomposable in characteristic $0$.}
\end{cases}
\]
\end{lem}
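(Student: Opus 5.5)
We compute $\kappa(-K)$ for $Y_0:=\mathbb{P}_B(\mathcal{E})$ by reducing to sections of symmetric powers on $B$. Since $g_B=1$, we have $K_B\sim 0$, and hence
$$
-K_{Y_0}\sim 2T_{\mathcal{E}}-p^*\mathfrak{e}.
$$
For $m>0$ this gives
$$
H^0(Y_0,-mK_{Y_0})\cong H^0\bigl(B,\mathrm{Sym}^{2m}\mathcal{E}\otimes\mathcal{O}_B(-m\mathfrak{e})\bigr).
$$
Numerically $-K_{Y_0}\equiv 2T_{\mathcal{E}}+e\Gamma$, so $(-K_{Y_0})^2=-4e+4e=0$. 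Thus $-K_{Y_0}$ is never big when $e\le 0$, and $\kappa\le 1$ in those cases. The proof then splits according to Atiyah's classification, which allows $e\ge -1$ by \cite[V, Theorems 2.12, 2.15]{Har77}.

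\emph{Case $e>0$.} Here $\mathcal{E}\cong\mathcal{O}_B\oplus\mathcal{O}_B(\mathfrak{e})$ with $\deg\mathfrak{e}=-e<0$. The symmetric power splits as $\bigoplus_{j=0}^{2m}\mathcal{O}_B((j-m)\mathfrak{e})$, and the summands with $j<m$ have degree $(m-j)e>0$. The total dimension is therefore of order $\sum_{j<m}(m-j)e\sim m^2e/2$, so $\kappa=2$. Equivalently, $-K_{Y_0}\equiv 2(T_{\mathcal{E}}+e\Gamma)-e\Gamma$, where $T_{\mathcal{E}}+e\Gamma$ is nef and big. Since $(-K_{Y_0})^2=0$, bigness must come from the actual effective decomposition $-K_{Y_0}=2T_{\mathcal{E}}+p^*(\text{positive degree})$, i.e.\ from the sections counted above, so one should argue via $h^0$ rather than intersection numbers.

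\emph{Case $e=0$, decomposable.} The summands are $\mathcal{O}_B((j-m)\mathfrak{e})$, all of degree $0$. Such a summand has a section iff $(j-m)\mathfrak{e}\sim 0$. If $\mathfrak{e}$ is non-torsion, only $j=m$ contributes, so $h^0=1$ for all $m$ and $\kappa=0$. If $\mathfrak{e}$ is $r$-torsion, the contributing indices $j\equiv m \pmod r$ give $h^0$ growing linearly, so $\kappa=1$.

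\emph{Case $e=0$, indecomposable.} Here $\mathcal{E}=F_2$ is the nontrivial self-extension of $\mathcal{O}_B$ and $\mathfrak{e}\sim 0$, so we need $h^0(\mathrm{Sym}^{2m}F_2)$. In characteristic $0$, $\mathrm{Sym}^nF_2\cong F_{n+1}$ is indecomposable of degree $0$, so $h^0=1$ and $\kappa=0$. In characteristic $p>0$ this fails. Instead, pulling back by Frobenius (or using that the Frobenius twist of the extension class splits on an ordinary/supersingular curve after finitely many iterates) gives a curve $C\in|p^aT_{\mathcal{E}}|$ disjoint from $T_{\mathcal{E}}$, as noted in the proof of Lemma~\ref{lem: non-vanishing 2}. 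Then $T_{\mathcal{E}}$ and $C$ span a pencil, giving a fibration and $\kappa=1$. Making this step precise is the main obstacle. I would cite the known structure, namely $\mathrm{Sym}^{p}F_2$ containing $\mathcal{O}\oplus\cdots$, or equivalently the existence of a second section of $\mathcal{O}(p^a)$ via the $p$-linear map induced on $H^1(\mathcal{O}_B)$, and otherwise rely on the literature.

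\emph{Case $e=-1$.} $\mathcal{E}$ is the indecomposable bundle of odd degree, and $-K_{Y_0}\equiv 2T_{\mathcal{E}}-\Gamma$ with self-intersection $0$. By \cite[V, Proposition~2.21]{Har77} and the known description (e.g.\ Maruyama), some multiple $|{-2K_{Y_0}}|$ or $|{-4K_{Y_0}}|$ contains a pencil: $Y_0$ admits an elliptic fibration whose fibers are numerically proportional to $2T_{\mathcal{E}}-\Gamma$. Concretely, $\mathrm{Sym}^{2m}\mathcal{E}\otimes\det^{-m}$ decomposes into line bundles of degree $0$ (after an isogeny-type étale cover trivializing $\mathcal{E}$ up to twist), yielding linear growth of $h^0$. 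Hence $\kappa=1$.
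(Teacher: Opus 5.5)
Your case division and the computations you actually carry out are the same as in the paper's proof. The decomposable cases are read off from $\mathrm{Sym}^{2m}\mathcal{E}\otimes\mathcal{O}_B(-m\mathfrak{e})\cong\bigoplus_{j}\mathcal{O}_B((j-m)\mathfrak{e})$. The characteristic-$0$ indecomposable case is Atiyah's $h^0(\mathrm{Sym}^n\mathcal{E})=1$. For $e=-1$ and for the indecomposable case in characteristic $p$, the paper also just cites \cite[Proposition~3.2]{GP96} and \cite[Proposition~3.32]{Tan14}, so deferring to the literature there is no weaker than the paper.

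However, the mechanism you suggest for characteristic $p$ is wrong as stated. If $B$ is ordinary, Frobenius acts bijectively on the one-dimensional space $H^1(B,\mathcal{O}_B)$. So no Frobenius pullback of the extension class $\xi$ ever vanishes, and the relative Frobenius trick only works for supersingular $B$. What works in general is the relation $F^*\xi=\lambda\xi$ for some $\lambda\in k$ (possibly $0$). Write $Y_0\setminus T_{\mathcal{E}}$ as a $\mathbb{G}_a$-torsor with local coordinates $t_i=t_j+f_{ij}$, where $\{f_{ij}\}$ represents $\xi$, and choose $h_i$ with $f_{ij}^p-\lambda f_{ij}=h_i-h_j$. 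Then the local functions $t_i^p-\lambda t_i-h_i$ glue to a global section of $\mathcal{O}_{Y_0}(pT_{\mathcal{E}})$ with a pole of order exactly $p$ along $T_{\mathcal{E}}$. Its divisor $C\sim pT_{\mathcal{E}}$ satisfies $C\cdot T_{\mathcal{E}}=0$ and does not contain $T_{\mathcal{E}}$, so it is disjoint from $T_{\mathcal{E}}$. This gives a base point free pencil and $\kappa(-K_{Y_0})=\kappa(2T_{\mathcal{E}})\ge 1$ in every positive characteristic, ordinary or supersingular.

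There are two smaller corrections. First, the step ``$(-K_{Y_0})^2=0$, hence $-K_{Y_0}$ is not big for $e\le 0$'' needs that $-K_{Y_0}\equiv 2T_{\mathcal{E}}+e\Gamma$ is nef when $e\in\{0,-1\}$ (\cite[V, Propositions~2.20, 2.21]{Har77}). Self-intersection zero alone rules out nothing, as your own $e>0$ case shows. Second, for $e=-1$, a splitting of $\mathcal{E}$ up to twist after pullback along an isogeny $\psi\colon B'\to B$ says nothing directly about $h^0$ over $B$. You have to pass to $Y'=Y_0\times_B B'$ with projection $\widetilde{\psi}\colon Y'\to Y_0$, use $K_{Y'}\sim\widetilde{\psi}^{*}K_{Y_0}$ (both base curves have trivial canonical class), and use invariance of the Iitaka dimension under finite surjective pullback. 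In characteristic $2$, where $[2]$ is inseparable, the required splitting still has to be justified separately, or that case left to the citation as in the paper.
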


\begin{proof}
The decomposable case follows directly from the decomposition of $\mathcal{E}\cong \mathcal{O}_{B}\oplus \mathcal{O}_{B}(\mathfrak{e})$ and the formula $-K_{\mathbb{P}_B(\mathcal{E})}\sim 2T_{\mathcal{E}}-p^{*}\mathfrak{e}$.
The case $e=-1$ follows from {\cite[Proposition~3.2]{GP96}}.
It remains to consider the case where $\mathcal{E}$ is indecomposable and $e=0$.
In this case, $\mathcal{E}$ is the non-trivial extension
$$
0\to \mathcal{O}_B\to \mathcal{E}\to \mathcal{O}_B\to 0
$$
and $-K_{\mathbb{P}_B(\mathcal{E})}\sim 2T_{\mathcal{E}}$.
In characteristic $0$, one has $h^{0}(B, \operatorname{Sym}^{m}\mathcal{E})=1$ for every $m\ge 0$ by {\cite[Lemma~17 and Theorem~9]{Ati57}},
and hence $\kappa(-K_{\mathbb{P}_B(\mathcal{E})})=0$.
In positive characteristic, the assertion follows, for example, from {\cite[Proposition~3.32]{Tan14}}.
\end{proof}

\begin{lem}\label{lem: 1st order thickening}
Let $D_1 \subseteq D_2$ be a first order thickening of schemes (i.e., a closed immersion defined by an ideal sheaf $\mathcal{I}_{D_1/D_2}$ such that $\mathcal{I}_{D_1/D_2}^2 = 0$). 
The short exact sequence of sheaves
\[
0 \longrightarrow \mathcal{I}_{D_1/D_2} \xrightarrow{\ \exp\ } \mathcal{O}_{D_2}^{\times} \longrightarrow \mathcal{O}_{D_1}^{\times} \longrightarrow 1,
\]
where $\exp(f) := 1 + f$, induces a long exact sequence in cohomology:
\[
 H^1(D_1, \mathcal{I}_{D_1/D_2}) \xrightarrow{\ H^1(\exp)\ } \operatorname{Pic}(D_2) \xrightarrow{\ \mathrm{res}\ } \operatorname{Pic}(D_1) \longrightarrow 1.
\]
Let $\mathcal{L} \in \operatorname{Pic}(D_2)$ be a line bundle on $D_2$ such that $\mathcal{L}|_{D_1} \cong \mathcal{O}_{D_1}$.
Then, the following hold:

\begin{itemize}
    \item[$(1)$] 
    There exists a short exact sequence of $\mathcal{O}_{D_2}$-modules:
    \begin{align}\label{eq: s.e.s. over D2}
    0 \longrightarrow \mathcal{I}_{D_1/D_2} \longrightarrow \mathcal{L} \longrightarrow \mathcal{O}_{D_1} \longrightarrow 0. 
    \end{align}
    \item[$(2)$] 
    Let $\xi \in H^1(D_1, \mathcal{I}_{D_1/D_2})$ be the extension class of \eqref{eq: s.e.s. over D2}, i.e., $\xi = \delta(1)$ via the connecting homomorphism $\delta \colon H^0(D_1, \mathcal{O}_{D_1}) \to H^1(D_1, \mathcal{I}_{D_1/D_2})$. Then
   \[
    \mathcal{L} = H^1(\exp)(\xi) \in \operatorname{Pic}(D_2).
    \]
\end{itemize}
\end{lem}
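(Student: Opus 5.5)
The plan is to prove (1) by pulling back the defining sequence of $D_1\subseteq D_2$ along a trivialization of $\mathcal{L}$, and to prove (2) by a \v{C}ech computation with an affine open cover.

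Fix an isomorphism $\phi\colon \mathcal{L}|_{D_1}=\mathcal{L}\otimes\mathcal{O}_{D_1}\xrightarrow{\sim}\mathcal{O}_{D_1}$. Tensor the exact sequence $0\to\mathcal{I}\to\mathcal{O}_{D_2}\to\mathcal{O}_{D_1}\to 0$ (writing $\mathcal{I}=\mathcal{I}_{D_1/D_2}$) with the locally free sheaf $\mathcal{L}$; exactness is preserved. Since $\mathcal{I}^2=0$, the sheaf $\mathcal{I}$ is an $\mathcal{O}_{D_1}$-module, so $\mathcal{I}\otimes\mathcal{L}\cong \mathcal{I}\otimes_{\mathcal{O}_{D_1}}\mathcal{L}|_{D_1}\cong\mathcal{I}$ via $\phi$. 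Composing the surjection $\mathcal{L}\to\mathcal{L}|_{D_1}$ with $\phi$ gives the exact sequence \eqref{eq: s.e.s. over D2}, which proves (1).

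For (2), take an open cover $\{U_i\}$ of $D_2$ (equivalently of $D_1$, since the underlying spaces coincide) on which $\mathcal{L}$ is trivial, with the $U_i$ affine, so that $H^1(U_{ij},\mathcal{I})=0$ on affine intersections when the scheme is separated. Otherwise, pass to a refinement and argue with \v{C}ech cohomology in the limit. Choose local generators $s_i\in\mathcal{L}(U_i)$ mapping to $1\in\mathcal{O}_{D_1}(U_i)$ under the map of (1). This is possible because $1$ lifts locally and any lift of a unit is a generator, as $\mathcal{I}$ is nilpotent. On $U_{ij}$ we have $s_j=g_{ij}s_i$ with $g_{ij}\in\mathcal{O}_{D_2}^\times(U_{ij})$, and $g_{ij}\equiv 1 \bmod \mathcal{I}$, so $g_{ij}=1+f_{ij}=\exp(f_{ij})$ with $f_{ij}\in\mathcal{I}(U_{ij})$. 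The cocycle $(g_{ij})$ represents $\mathcal{L}$ in $\check H^1(\mathcal{O}_{D_2}^\times)=\operatorname{Pic}(D_2)$, so $\mathcal{L}=H^1(\exp)([f_{ij}])$.

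The remaining step, and the main bookkeeping point, is to identify $[f_{ij}]$ with $\delta(1)$. By the standard description of the connecting map, $\delta(1)$ is represented by $s_j-s_i=f_{ij}s_i\in\mathcal{I}\cdot\mathcal{L}(U_{ij})$. Under the identification $\mathcal{I}\cdot\mathcal{L}\cong\mathcal{I}\otimes\mathcal{L}\cong\mathcal{I}$ used in (1), the element $f_{ij}s_i$ corresponds to $f_{ij}\cdot\phi(\bar s_i)=f_{ij}$. Thus $\xi=[f_{ij}]$. Since the sign conventions for $\delta$ and for transition functions must be matched, I would fix them once, for example $s_j=g_{ij}s_i$ and $\delta(1)=(s_j-s_i)$. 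With these choices the signs agree, and replacing $\mathcal{L}$ by $\mathcal{L}^{-1}$ corrects any global sign discrepancy. Finally, I would note that the choice of $\phi$ changes $\xi$ only by the image of $H^0(\mathcal{O}_{D_1}^\times)$, which lies in the kernel of $H^1(\exp)$, so the class $H^1(\exp)(\xi)$ is well defined.
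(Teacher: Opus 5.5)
Your proposal is correct and follows essentially the same route as the paper. You tensor the ideal sequence with $\mathcal{L}$ and identify $\mathcal{I}\otimes\mathcal{L}\cong\mathcal{I}$, doing this intrinsically via the $\mathcal{O}_{D_1}$-module structure of $\mathcal{I}$ and $\phi$ where the paper normalizes transition functions to be $\equiv 1 \bmod \mathcal{I}$; you then compute $\delta(1)$ as the \v{C}ech cocycle $s_j-s_i=(g_{ij}-1)s_i$ and apply $H^1(\exp)$. With the conventions you fix, which match the paper's, $\mathcal{L}=H^1(\exp)(\xi)$ holds on the nose, so the remark about replacing $\mathcal{L}$ by $\mathcal{L}^{-1}$ is unnecessary, while your check that changing $\phi$ alters $\xi$ only by the image of $H^0(\mathcal{O}_{D_1}^\times)$ is a correct addition the paper omits.
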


\begin{proof}
(1) Consider the defining short exact sequence for the ideal sheaf $\mathcal{I}_{D_1/D_2} \subset \mathcal{O}_{D_2}$:
\[
0 \longrightarrow \mathcal{I}_{D_1/D_2} \longrightarrow \mathcal{O}_{D_2} \longrightarrow \mathcal{O}_{D_1} \longrightarrow 0.
\]
Since $\mathcal{L}$ is a locally free $\mathcal{O}_{D_2}$-module of rank one, tensoring this sequence with $\mathcal{L}$ yields the exact sequence
\[
0 \longrightarrow \mathcal{I}_{D_1/D_2} \otimes_{\mathcal{O}_{D_2}} \mathcal{L} \longrightarrow \mathcal{L} \longrightarrow \mathcal{O}_{D_1} \otimes_{\mathcal{O}_{D_2}} \mathcal{L} \longrightarrow 0.
\]
By assumption, $\mathcal{L}|_{D_1} \cong \mathcal{O}_{D_1}$, so $\mathcal{O}_{D_1} \otimes_{\mathcal{O}_{D_2}} \mathcal{L }\cong \mathcal{O}_{D_1}$.
Furthermore, let $\{U_\lambda\}_{\lambda \in \Lambda}$ be an affine open cover of $D_2$. 
For each $\lambda \in \Lambda$, choose a local basis $e_\lambda \in \Gamma(U_\lambda, \mathcal{L})$ of $\mathcal{L}|_{U_\lambda}$ as a free $\mathcal{O}_{D_2}|_{U_\lambda}$-module of rank one. 
The transition functions $g_{\lambda\mu} \in \Gamma(U_\lambda \cap U_\mu, \mathcal{O}_{D_2}^\times)$ representing $\mathcal{L}$ are defined on each overlap $U_\lambda \cap U_\mu$ by the relation
\[
e_\mu = g_{\lambda\mu}  e_\lambda \quad \text{in } \Gamma(U_\lambda \cap U_\mu, \mathcal{L}).
\]
The restriction condition $\mathcal{L}|_{D_1} \cong \mathcal{O}_{D_1}$ implies that on each overlap $U_\lambda \cap U_\mu \cap D_1$, the restricted transition functions can be trivialized by local non-vanishing sections $s_\lambda \in \Gamma(U_\lambda \cap D_1, \mathcal{O}_{D_1}^\times)$ satisfying
\[
g_{\lambda\mu}\big|_{D_1} = \frac{s_\lambda}{s_\mu} \quad \text{on } U_\lambda \cap U_\mu \cap D_1.
\]
Since $U_\lambda$ is affine and $D_1 \subset D_2$ is defined by a nilpotent ideal, the canonical map $\Gamma(U_\lambda, \mathcal{O}_{D_2}^\times) \to \Gamma(U_\lambda \cap D_1, \mathcal{O}_{D_1}^\times)$ is surjective. 
Hence, we can choose local lifts $\tilde{s}_\lambda \in \Gamma(U_\lambda, \mathcal{O}_{D_2}^\times)$ for each $s_\lambda$. 
By replacing $g_{\lambda\mu}$ with the modified transition functions $\frac{\tilde{s}_\mu}{\tilde{s}_\lambda} g_{\lambda\mu}$, we may assume without loss of generality that
\[
g_{\lambda\mu}\big|_{D_1} = 1 \quad \text{for all } \lambda, \mu,
\]
which means $g_{\lambda\mu} - 1 \in \Gamma(U_\lambda \cap U_\mu, \mathcal{I}_{D_1/D_2})$.
The sheaf $\mathcal{I}_{D_1/D_2} \otimes_{\mathcal{O}_{D_2}} \mathcal{L}$ is constructed by gluing the local subsheaves $\mathcal{I}_{D_1/D_2}\big|_{U_\lambda}$ via the multiplication isomorphisms $g_{\lambda\mu} : \mathcal{I}_{D_1/D_2}\big|_{U_\lambda \cap U_\mu} \xrightarrow{\,\sim\,} \mathcal{I}_{D_1/D_2}\big|_{U_\lambda \cap U_\mu}$. 
Since $\mathcal{I}_{D_1/D_2}^2=0$, for any open subset $V \subseteq U_\lambda \cap U_\mu$ and any local section $x \in \Gamma(V, \mathcal{I}_{D_1/D_2})$, we have
\[
g_{\lambda\mu} x = \big(1 + (g_{\lambda\mu} - 1)\big) x = x + (g_{\lambda\mu} - 1) x = x.
\]
Thus, $g_{\lambda\mu}$ acts as the identity automorphism on $\mathcal{I}_{D_1/D_2}\big|_{U_\lambda \cap U_\mu}$ for every pair $\lambda, \mu$, yielding a canonical isomorphism of sheaves $\mathcal{I}_{D_1/D_2} \otimes_{\mathcal{O}_{D_2}} \mathcal{L} \;\cong\; \mathcal{I}_{D_1/D_2}$.
Thus, we obtain the desired exact sequence:
\[
0 \longrightarrow \mathcal{I}_{D_1/D_2} \longrightarrow \mathcal{L} \longrightarrow \mathcal{O}_{D_1} \longrightarrow 0.
\]

\medskip
\noindent
(2) Let $\xi \in \mathrm{Ext}^1_{\mathcal{O}_{D_2}}(\mathcal{O}_{D_1}, \mathcal{I}_{D_1/D_2})$ be the extension class of the exact sequence \eqref{eq: s.e.s. over D2}.
Taking the long exact sequence of the cohomology of \eqref{eq: s.e.s. over D2}, the extension class $\xi$ corresponds to the image of $1 \in  H^0(D_1, \mathcal{O}_{D_1})$ under the connecting homomorphism
\[
\delta : H^0(D_1, \mathcal{O}_{D_1}) \longrightarrow H^1(D_2, \mathcal{I}_{D_1/D_2}),
\]
that is, $\xi = \delta(1) \in H^1(D_2, \mathcal{I}_{D_1/D_2})$.

Explicitly, with respect to the local bases $\tilde{e}_\lambda \in \Gamma(U_\lambda, \mathcal{L})$ chosen in (1) satisfying $\tilde{e}_\lambda\big|_{D_1} = 1 \in \Gamma(U_\lambda \cap D_1, \mathcal{O}_{D_1})$, each $\tilde{e}_\lambda$ serves as a local lift of $1 \in \Gamma(U_\lambda \cap D_1, \mathcal{O}_{D_1})$. 
On each overlap $U_\lambda \cap U_\mu$, the difference between these local lifts is given by
\[
\tilde{e}_\mu - \tilde{e}_\lambda = (g_{\lambda\mu} - 1) \tilde{e}_\lambda \;\in\; \Gamma(U_\lambda \cap U_\mu, \mathcal{I}_{D_1/D_2}).
\]
Thus, the 1-cocycle $\{g_{\lambda\mu} - 1\}_{\lambda\mu}$ represents $\xi = \delta(1) \in H^1(D_2, \mathcal{I}_{D_1/D_2})$ in \v{C}ech cohomology.
On the other hand, since $\mathcal{I}_{D_1/D_2}^2 = 0$, there is a short exact sequence of sheaves:
\[
0 \longrightarrow \mathcal{I}_{D_1/D_2} \xrightarrow{\;\exp\;} \mathcal{O}_{D_2}^\times \xrightarrow{\;\text{res}\;} \mathcal{O}_{D_1}^\times \longrightarrow 1,
\]
where the exponential map $\exp : \mathcal{I}_{D_1/D_2} \to \mathcal{O}_{D_2}^\times$ is given locally by $f \mapsto 1 + f$. 
This short exact sequence induces the connecting map on cohomology groups:
\[
H^1(\exp) : H^1(D_2, \mathcal{I}_{D_1/D_2}) \longrightarrow H^1(D_2, \mathcal{O}_{D_2}^\times) \cong \mathrm{Pic}(D_2).
\]
Applying $H^1(\exp)$ to the extension class $\xi = \delta(1) = [\{g_{\lambda\mu} - 1\}_{\lambda\mu}]$, we obtain:
\[
H^1(\exp)(\xi) = [\{\exp(g_{\lambda\mu} - 1)\}_{\lambda\mu}] = [\{1 + (g_{\lambda\mu} - 1)\}_{\lambda\mu}] = [\{g_{\lambda\mu}\}_{\lambda\mu}] = \mathcal{L} \;\in\; \mathrm{Pic}(D_2),
\]
which proves the claim.
\end{proof}

\begin{proof}[Proof of Theorem~\ref{thm: log CY}]
Let $\rho\colon Y\to X$ be a minimal resolution.
By the arguments in Steps 1--8 of the proof of {\cite[Theorem 3.34]{Tan14}}, it suffices to consider the case where $Y$ is an elliptic ruled surface.

We freely use the notations introduced in Section~\ref{sec: non-vanishing}.
Choose a blow-down sequence 
$$
Y=Y_{N}\to Y_{N-1}\to \cdots \to Y_0
$$
 satisfying Lemma~\ref{claim: replace blow-down seq.}.
By the assumption $\kappa(X, K_{X}+\Delta)=0$ and Lemmas~\ref{lem: non-vanishing 1} and \ref{lem: non-vanishing 2}, we have
$$
\frac{2}{\alpha+2}\Delta_{Y_0} \sim_{\mathbb{Q}} -K_{Y_0} \sim_{\mathbb{Q}} 2 T_{\mathcal{E}} - p^*\mathfrak{e}.
$$
Consequently,
$$
K_{Y_0}+\Delta_{Y_0}\sim_{\Q}-\frac{\alpha}{2}K_{Y_0}\sim_{\Q}\alpha T_{\mathcal{E}}-\frac{\alpha}{2}p^{*}\mathfrak{e}.
$$
Since $(K_{Y_0}+\Delta_{Y_0})^2=0$ and $K_{Y}+\Delta_{Y}$ is nef, we have $a_i=0$ for every $i$.
Hence $K_{Y}+\Delta_{Y}=\varphi^{*}(K_{Y_{0}}+\Delta_{Y_{0}})$.
In particular, $\kappa(K_{Y_0}+\Delta_{Y_0})=\kappa(K_{Y}+\Delta_{Y})=0$.
Therefore, for every sufficiently divisible integer $m>0$,
$$
|m(K_{Y}+\Delta_{Y})|=\{\frac{m\alpha}{\alpha+2}\varphi^{*}\Delta_{Y_0}\}.
$$
Thus, it remains to prove that $\alpha=0$.
 
Suppose to the contrary that $\alpha>0$.
Then 
$$
0=\kappa(K_{Y_0}+\Delta_{Y_0})=\kappa(-K_{Y_0}).
$$
By Lemma~\ref{lem: elliptic ruled}, we may assume that $e=0$ and that either $\mathcal{E}$ is decomposable with $\mathfrak{e}$ non-torsion, or $\mathcal{E}$ is indecomposable and $\operatorname{char} k=0$.

First suppose that $\mathcal{E}$ is decomposable with $\mathfrak{e}$ non-torsion.
As in the proof of Lemma~\ref{lem: non-vanishing 2}, we can write $\Delta_{Y_0}=b_1T_1+b_2T_2$,
where $b_1,b_2\in \Q_{\ge 0}$ with $b_1+b_2=\alpha+2$ and $T_1=T_{\mathcal{E}}$ and $T_2\sim T_{\mathcal{E}}-p^{*}\mathfrak{e}$ are the two disjoint sections of $p$.
On the other hand, since $\Delta_{Y_0}  \sim_{\mathbb{Q}} (\alpha+2) T_{\mathcal{E}} - \frac{\alpha+2}{2}p^*\mathfrak{e}$, we have 
$$
b_1=b_2=\frac{\alpha+2}{2}>1.
$$
Thus, the same argument as in the proof of Lemma~\ref{lem: non-vanishing 2} shows that $\mathfrak{e}\sim_{\Q} 0$,
contradicting the assumption that $\mathfrak{e}$ is non-torsion.

Assume that $\mathcal{E}$ is indecomposable and $\operatorname{char} k=0$.
Then $\mathcal{E}$ is the non-trivial extension
\begin{equation}\label{eq: extension}
0\to \mathcal{O}_B\to \mathcal{E}\to \mathcal{O}_B\to 0.
\end{equation}
Since $\mathcal{O}_{Y_0}(T_{\mathcal{E}})|_{T_{\mathcal{E}}}\cong \mathcal{O}_{T_{\mathcal{E}}}$, 
we have the exact sequence
$$
0\to \mathcal{O}_{Y_0}\to \mathcal{O}_{Y_0}(T_{\mathcal{E}})\to \mathcal{O}_{T_{\mathcal{E}}}\to 0.
$$
Pushing this sequence forward by $p$ gives the extension \eqref{eq: extension}.

We will also need the following description of the same extension on the first infinitesimal neighborhood $2T_{\mathcal{E}}$.
Consider  the commutative diagram
\[
\xymatrix{
 & 0 \ar[d] & 0 \ar[d] & & \\
 & \mathcal{O}_{Y_0}(-T_{\mathcal{E}}) \ar@{=}[r] \ar[d] & \mathcal{O}_{Y_0}(-T_{\mathcal{E}}) \ar[d] & & \\
0 \ar[r] & \mathcal{O}_{Y_0} \ar[r] \ar[d] & \mathcal{O}_{Y_0}(T_{\mathcal{E}}) \ar[r] \ar[d] & \mathcal{O}_{Y_0}(T_{\mathcal{E}})|_{T_{\mathcal{E}}} \ar[r] \ar@{=}[d] & 0 \\
0 \ar[r] & \mathcal{O}_{T_{\mathcal{E}}} \ar[r] \ar[d] & \mathcal{O}_{Y_0}(T_{\mathcal{E}})|_{2T_{\mathcal{E}}} \ar[r] \ar[d] & \mathcal{O}_{Y_0}(T_{\mathcal{E}})|_{T_{\mathcal{E}}} \ar[r] & 0, \\
 & 0 & 0 & & 
}
\]
Pushing forward by $p$ shows that  \eqref{eq: extension} is also obtained from the exact sequence
\begin{equation} \label{eq: s.e.s}
0 \to \mathcal{O}_{T_{\mathcal{E}}} \to \mathcal{O}_{Y_0}(T_{\mathcal{E}})|_{2T_{\mathcal{E}}} \to \mathcal{O}_{Y_0}(T_{\mathcal{E}})|_{T_{\mathcal{E}}}\cong \mathcal{O}_{T_{\mathcal{E}}} \to 0.
\end{equation}
The connecting homomorphism associated with \eqref{eq: s.e.s} is
\[
\delta \colon H^0(T_{\mathcal{E}}, \mathcal{O}_{T_{\mathcal{E}}}) \longrightarrow H^1(T_{\mathcal{E}}, \mathcal{O}_{T_{\mathcal{E}}}) \cong H^1(B, \mathcal{O}_B).
\]
Under the standard identification $\mathrm{Ext}^1(\mathcal{O}_{B}, \mathcal{O}_{B}) \cong H^1(B, \mathcal{O}_{B})$, the class $\delta(1)$ is precisely the extension class $\xi \in \mathrm{Ext}^1(\mathcal{O}_{B}, \mathcal{O}_{B})$ of \eqref{eq: extension}.
Since \eqref{eq: extension} is non-split, we have $\delta(1)\neq 0$.

We now have $\Delta_{Y_0}=(\alpha+2)T_{\mathcal{E}}$.
Let $\widetilde{T}_{\mathcal{E}}$ denote the proper transform of $T_{\mathcal{E}}$ on $Y$.
Note that $\rho$ contracts $\widetilde{T}_{\mathcal{E}}$ together with some $\varphi$-exceptional curves.

Since $K_{X}+\Delta$ is $\Q$-Cartier and $\rho^{*}(K_X+\Delta)=K_{Y}+\Delta_{Y}\sim_{\Q}\alpha \varphi^{*}T_{\mathcal{E}}$,
there exists an open neighborhood $U\subset Y$ of the $\rho$-exceptional locus such that
$\mathcal{O}_{Y}(m \rho^{*}(K_X+\Delta))\cong \varphi^{*}\mathcal{O}_{Y_0}(m\alpha T_{\mathcal{E}})$ is trivial on $U$ for a sufficiently divisible $m>0$.
In particular, for any effective $\rho$-exceptional divisor $E$ on $Y$, 
\begin{equation} \label{eq: trivial on excep}
\left(\varphi^{*}\mathcal{O}_{Y_0}(m\alpha T_{\mathcal{E}})\right)|_{E}\cong \mathcal{O}_{E}.
\end{equation}
We need the following claim (for the definition of chain-connected divisors, see \cite{Kon10} or \cite{Eno23}):
\begin{claim}
There exists a $\rho$-exceptional and chain-connected divisor $T_1$ on $Y$  that agrees with $\varphi^{*}T_{\mathcal{E}}$ in a neighborhood of $\widetilde{T}_{\mathcal{E}}$.
\end{claim}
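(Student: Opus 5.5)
The plan is to show that $\varphi^{*}T_{\mathcal{E}}$ itself works, i.e.\ to take $T_1:=\varphi^{*}T_{\mathcal{E}}$. This divisor trivially agrees with $\varphi^{*}T_{\mathcal{E}}$ near $\widetilde{T}_{\mathcal{E}}$. So two things must be checked: that every component of $\operatorname{Supp}\varphi^{*}T_{\mathcal{E}}$ is $\rho$-exceptional, and that $\varphi^{*}T_{\mathcal{E}}$ is chain-connected.

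\emph{Step 1: $\rho$-exceptionality.} Write $\varphi^{*}T_{\mathcal{E}}=\widetilde{T}_{\mathcal{E}}+\sum_F m_F F$ and $K_Y=\varphi^{*}K_{Y_0}+\sum_F k_F F$, both sums running over the $\varphi$-exceptional curves $F$. Since all $a_i=0$ and $\Delta_{Y_0}=(\alpha+2)T_{\mathcal{E}}$, we get
$$
\Delta_Y=\varphi^{*}\Delta_{Y_0}-(K_Y-\varphi^{*}K_{Y_0})=(\alpha+2)\widetilde{T}_{\mathcal{E}}+\sum_F\bigl((\alpha+2)m_F-k_F\bigr)F .
$$
\begin{itemize}
\item The coefficient of $\widetilde{T}_{\mathcal{E}}$ is $\alpha+2>1$. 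Since $\Delta$ is a boundary, $\widetilde{T}_{\mathcal{E}}$ is $\rho$-exceptional, as already noted in the text.
\item For a $\varphi$-exceptional $F$ with $m_F\ge 1$, I use the standard inequality $m_F=\operatorname{ord}_F(\varphi^{*}T_{\mathcal{E}})\le k_F$. It holds because $(Y_0,T_{\mathcal{E}})$ is a smooth curve on a smooth surface, hence plt, so every log discrepancy $k_F+1-m_F$ is at least $1$. It also follows by induction on the blow-ups, since each blow-up raises $k$ by $k_{\rm old}+1$ and $m$ by at most $m_{\rm old}$ summed over at most two components.
\item Hence the coefficient of such an $F$ in $\Delta_Y$ is at least $(\alpha+1)m_F\ge\alpha+1>1$, and $F$ is $\rho$-exceptional as well.
\end{itemize}
Therefore $\varphi^{*}T_{\mathcal{E}}$ is a $\rho$-exceptional effective divisor.

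\emph{Step 2: chain-connectedness.} Its support is connected, being the total transform of the irreducible curve $T_{\mathcal{E}}$. Moreover $\varphi^{*}T_{\mathcal{E}}$ is numerically trivial on every component $C$ of its support:
\begin{itemize}
\item $\varphi^{*}T_{\mathcal{E}}\cdot F=0$ for every $\varphi$-exceptional curve $F$;
\item $\varphi^{*}T_{\mathcal{E}}\cdot\widetilde{T}_{\mathcal{E}}=T_{\mathcal{E}}^2=-e=0$.
\end{itemize}
I would then invoke the standard fact from \cite{Kon10} (see also \cite{Eno23}): an effective divisor $Z$ with connected support and $Z\cdot C\le 0$ for every component $C$ is chain-connected.

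If a direct argument is wanted, take $0<A<Z$ with $Z=\varphi^{*}T_{\mathcal{E}}$. By Zariski's lemma applied to the negative semidefinite intersection form on $\operatorname{Supp}Z$, we have $A^2\le 0$, with equality only if $A$ is proportional to $Z$. Hence $(Z-A)\cdot A=Z\cdot A-A^2=-A^2$. I would use this to produce, at each stage of the chain, a component $C\le Z-A$ with $(Z-A)\cdot C<0$, equivalently $A\cdot C>0$. Connectedness of the support guarantees that such a $C$ exists once one compares with the proportional multiple of $Z$. This is exactly the chain-connectedness criterion.

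\emph{Expected obstacle.} The main difficulty is Step 1, namely ensuring that no component of $\operatorname{Supp}\varphi^{*}T_{\mathcal{E}}$ fails to be $\rho$-exceptional. A failure there would force $T_1$ to be a proper truncation of $\varphi^{*}T_{\mathcal{E}}$ and complicate both the agreement near $\widetilde{T}_{\mathcal{E}}$ and the chain-connectedness. The inequality $m_F\le k_F$ together with $\alpha>0$ resolves this. A secondary point is matching the precise definition of chain-connectedness used in \cite{Kon10}, and verifying the strict positivity in Step 2 via Zariski's lemma.
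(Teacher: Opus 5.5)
Your Step 1 fails, and with it the choice $T_1=\varphi^{*}T_{\mathcal{E}}$. The inequality $m_F\le k_F$ is correct, since the discrepancies of $(Y_0,T_{\mathcal{E}})$ are nonnegative. But it gives the \emph{upper} bound $(\alpha+2)m_F-k_F\le(\alpha+1)m_F$ on $\operatorname{mult}_F\Delta_Y$, not the lower bound you need; your conclusion would require $k_F\le m_F$.

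The lower bound is in fact false in general. Let $\varphi_1$ blow up $P_1\in T_{\mathcal{E}}$ and let $\varphi_2$ blow up $P_2\in E_1\setminus\widetilde{T}_{\mathcal{E}}$ (the blow-up pattern of Example~\ref{ex: Mor26pre}). Then $E_2$ occurs in $\varphi^{*}T_{\mathcal{E}}$ with $m_{E_2}=1$ but $k_{E_2}=2$, so $\operatorname{mult}_{E_2}\Delta_Y=\alpha$, which may be $\le 1$. In that case the boundary condition on $\Delta$ says nothing about $E_2$, and nothing in the hypotheses used so far forces $E_2$ to be $\rho$-exceptional. So $\varphi^{*}T_{\mathcal{E}}$ is not known to be $\rho$-exceptional, and Step 1 has no valid argument.

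What is missing is exactly the truncation you flag as the ``expected obstacle''. The paper sets $T_1^{(0)}=T_{\mathcal{E}}$. It takes $T_1^{(i)}=\varphi_i^{*}T_1^{(i-1)}$ if $P_i$ lies on the proper transform of $T_{\mathcal{E}}$ or off $\operatorname{Supp}T_1^{(i-1)}$, and $T_1^{(i)}=\varphi_i^{*}T_1^{(i-1)}-E_i$ otherwise. Using $\operatorname{mult}_{E_i}\Delta_{Y_i}=\operatorname{mult}_{P_i}\Delta_{Y_{i-1}}-1$, one propagates by induction the invariant $\operatorname{mult}_C\Delta_{Y_i}>\operatorname{mult}_C T_1^{(i)}$ for every component $C$ of $T_1^{(i)}$. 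When $P_i$ lies on the proper transform of $T_{\mathcal{E}}$, the coefficient $\alpha+2$ absorbs the $-1$; this is where $\alpha>0$ is used. Otherwise the subtracted $E_i$ absorbs it. The invariant gives $\operatorname{mult}_C\Delta_Y>1$, hence $\rho$-exceptionality. Since subtractions occur only away from $\widetilde{T}_{\mathcal{E}}$, the divisor $T_1$ still equals $\varphi^{*}T_{\mathcal{E}}$ near $\widetilde{T}_{\mathcal{E}}$.

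The truncated divisor is no longer numerically trivial on its components, so your Step 2 does not apply to it. Chain-connectedness instead follows from its stability under pull-back and under subtracting the exceptional curve of a blow-up centred on the support (\cite[Proposition~3.7]{Eno23}, \cite[Lemma~3.2~(6)]{Kon10}). Separately, the general fact you quote in Step 2 is false as stated: a multiple fibre $2F_0$ of an elliptic fibration has connected support and is numerically trivial on its components, yet it is not chain-connected. Your direct argument for $\varphi^{*}T_{\mathcal{E}}$ works only because $\widetilde{T}_{\mathcal{E}}$ has multiplicity one, so no $0<A<Z$ is proportional to $Z$; that point would need to be stated.
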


\begin{proof}
We construct effective divisors $T_{1}^{(i)}$ on $Y_i$ inductively.
Set $T_{1}^{(0)}:=T_{\mathcal{E}}$.
Suppose that $i>0$ and $T_{1}^{(i-1)}$ has been constructed.
If the center of the blow-up $\varphi_{i}\colon Y_i\to Y_{i-1}$ is either contained in the proper transform of $T_{\mathcal{E}}$ or does not belong to the support of $T_{1}^{(i-1)}$,
set 
$$
T_{1}^{(i)}:=\varphi_{i}^{*}T_{1}^{(i-1)}.
$$
Otherwise, set 
$$
T_{1}^{(i)}:=\varphi_{i}^{*}T_{1}^{(i-1)}-E_{i}.
$$
Finally, put $T_{1}:=T_{1}^{(N)}$.

By {\cite[Proposition~3.7]{Eno23}} and {\cite[Lemma~3.2~(6)]{Kon10}},
it follows that each $T^{(i)}_{1}$ is chain-connected.
Since $\alpha$ is positive, $\Delta_{Y_i}=\varphi_{i}^{*}\Delta_{Y_{i-1}}-E_i$ and $\Delta_{Y_0}=(\alpha+2)T_{\mathcal{E}}$, it follows by induction on $i$ that,
for every irreducible component $C$ of $T_{1}^{(i)}$,
$$
\operatorname{mult}_{C}\Delta_{Y_i}>\operatorname{mult}_{C}T_{1}^{(i)}.
$$
Hence every irreducible component $C$ of $T_{1}$ satisfies $\operatorname{mult}_{C}\Delta_{Y}>1$.
Since the coefficients of $\Delta$ are at most $1$,
This implies that $T_{1}$ is $\rho$-exceptional.
Finally, by the definition of $T_1^{(i)}$, the subtraction of $E_i$
occurs only when the center of $\varphi_i$ lies on
$\operatorname{Supp}T_1^{(i-1)}$ away from the proper transform of
$T_{\mathcal{E}}$. Thus, near $\widetilde{T}_{\mathcal{E}}$, no such
subtraction occurs. Consequently,
\[
T_1=\varphi^*T_{\mathcal{E}}
\]
in a neighborhood of $\widetilde{T}_{\mathcal{E}}$.
This proves the claim.
\end{proof}

We now take $T_{1}$ as in the above claim, and set 
$$
T_{2}:=T_{1}+\widetilde{T}_{\mathcal{E}}.
$$
Both $T_1$ and $T_2$ are $\rho$-exceptional.
Moreover, the inclusion $T_{1}\subset T_{2}$ is a first order thickening, whose defining ideal sheaf is
$$
\mathcal{I}_{T_1/T_2} \cong \mathcal{O}_{Y}(-T_1)|_{\widetilde{T}_{\mathcal{E}}} 
\cong \mathcal{O}_{Y}(-\varphi^{*}T_{\mathcal{E}})|_{\widetilde{T}_{\mathcal{E}}} 
\cong \varphi^{*} \mathcal{O}_{Y_0}(-T_{\mathcal{E}})|_{T_{\mathcal{E}}} 
\cong \varphi^{*} \mathcal{I}_{T_{\mathcal{E}}/2T_{\mathcal{E}}} 
\cong \mathcal{O}_{\widetilde{T}_{\mathcal{E}}}.
$$
The morphism $\varphi$ induces a morphism of thickenings $(T_1\subset T_2)\to (T_{\mathcal{E}}\subset 2T_{\mathcal{E}})$, and hence
a commutative diagram 
\[
\xymatrix@C=40pt{
 0 \ar[r] & H^{1}(\mathcal{O}_{T_{\mathcal{E}}}) \ar^{\scriptstyle H^1(\exp)}[r] \ar^{\cong}[d] & \operatorname{Pic}(2T_{\mathcal{E}}) \ar[r] \ar^{\varphi^{*}}[d] & \operatorname{Pic}(T_{\mathcal{E}}) \ar[r] \ar^{\varphi^{*}}[d] & 0 \\
0 \ar[r] & H^{1}(\mathcal{O}_{\widetilde{T}_{\mathcal{E}}}) \ar^{\scriptstyle H^1(\exp)}[r]  &  \operatorname{Pic}(T_{2}) \ar[r] & \operatorname{Pic}(T_{1}) \ar[r] & 0. 
}
\]
Here the exactness on the left follows from the the fact that
$$
H^{0}(\mathcal{O}_{T_{\mathcal{E}}}^{\times})\cong k^{\times},\quad H^{0}(\mathcal{O}_{T_{1}}^{\times})\cong k^{\times},
$$
which in turn follows from the chain-connectivity of $T_{\mathcal{E}}$ and $T_{1}$ (see {\cite[Lemma~3.9]{Eno23}}).
Thus,
$$
H^{0}(\mathcal{O}_{2T_{\mathcal{E}}}^{\times}) \twoheadrightarrow H^{0}(\mathcal{O}_{T_{\mathcal{E}}}^{\times}),\quad H^{0}(\mathcal{O}_{T_{2}}^{\times}) \twoheadrightarrow H^{0}(\mathcal{O}_{T_{1}}^{\times}).
$$

By the construction of the extension class $\xi=\delta(1)$ and Lemma~\ref{lem: 1st order thickening},
 the line bundle $\mathcal{O}_{Y_0}(m\alpha T_{\mathcal{E}})|_{2T_{\mathcal{E}}}$ is the image of 
  $m\alpha \xi$ under  $H^{1}(\exp)$.
 By the commutativity of the above diagram,
 its pullback $\left(\varphi^{*}\mathcal{O}_{Y_0}(m\alpha T_{\mathcal{E}})\right)|_{T_{2}}$ is also the image of $m\alpha \xi$ under the bottom $H^{1}(\exp)$.
 Since $m\alpha>0$ and $\xi\neq 0$, we have $m\alpha \xi\neq 0$.
 Moreover,  since $H^{1}(\exp)$ is injective, $\varphi^{*}\mathcal{O}(m\alpha T_{\mathcal{E}})|_{2T_{\mathcal{E}}}$ is non-trivial.
 On the other hand, $T_2$ is $\rho$-exceptional, so \eqref{eq: trivial on excep} gives 
 $\left(\varphi^{*}\mathcal{O}_{Y_0}(m\alpha T_{\mathcal{E}})\right)|_{T_2}\cong \mathcal{O}_{T_2}$,
 a contradiction.
Therefore, $\alpha=0$, and the proof is complete.
\end{proof}

\section{Non-big case}\label{sec: non-big case}
In this section, we complete the proof of the abundance theorem when  $K_X+\Delta$ is not $\pi$-big.
When $\dim S=0$, we may assume that $\kappa(X, K_X+\Delta)=1$ by Theorems~\ref{thm: non-vanishing} and \ref{thm: log CY}.
The abundance in this case was already proved in {\cite[Theorem 4.1]{Fuj84}}.
Thus, we may assume that $S$ is a curve.
Here, we prove the following slightly stronger statement:

\begin{thm}\label{lem: abandance non-big}
Let $\pi\colon X\to S$ be a proper surjective morphism from a normal surface $X$ to a curve $S$ over $k$.
Let $\Delta$ be an effective $\Q$-divisor on $X$ such that $K_X+\Delta$ is $\pi$-nef but not $\pi$-big.
Then, $K_X+\Delta$ is $\Q$-Cartier and $\pi$-semiample.
\end{thm}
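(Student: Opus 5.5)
Here $\pi\colon X\to S$ is proper surjective onto a curve, so after Stein factorization we may assume $\pi_*\mathcal{O}_X=\mathcal{O}_S$. Then $S$ is normal, hence smooth, and the fibers of $\pi$ are connected curves. Put $L:=K_X+\Delta$. This is a Weil $\Q$-divisor with Mumford pullback to a resolution. Since $L$ is $\pi$-nef, not $\pi$-big, and $\dim X=2$, we get $L\cdot F=0$ for a general fiber $F$. Consequently $L\cdot C=0$ for every curve $C$ in every fiber. The plan has three steps: show $L\equiv_\pi 0$ in a strong local sense near each fiber, deduce that $L$ is $\Q$-Cartier, and finally show $L\sim_{\Q,\pi}0$, which gives $\pi$-semiampleness.

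\textbf{Step 1: vertical structure.} Fix $s\in S$ with fiber $F_s=\pi^*s=\sum m_jC_j$. Since $L\cdot C_j=0$ for all $j$, and the intersection matrix of $\{C_j\}$ is negative semidefinite with kernel spanned by $F_s$ (Zariski's lemma, valid for Mumford intersection on the minimal resolution $\rho\colon Y\to X$), the vertical part of $\rho^*L$ near the fiber over $s$ is numerically a rational multiple of $\rho^*F_s$. The horizontal part is the delicate piece. Restrict to the generic fiber $X_\eta$, a regular curve over $k(S)$. There $\deg(K_X+\Delta)|_{X_\eta}=0$ with $\Delta$ effective, so either $X_\eta$ has arithmetic genus one and $\Delta$ is vertical, or $X_\eta$ has genus zero and $\deg \Delta_\eta=2$. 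The latter includes the case where $\Delta$ has horizontal components with coefficient greater than one. This is why the statement only assumes $\Delta$ effective: the proof must not use a boundary hypothesis.

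\textbf{Step 2: $\Q$-Cartierness and linear triviality on the generic fiber.} On the regular curve $X_\eta$ a degree-zero divisor is not automatically torsion, so we need $mL|_{X_\eta}\sim 0$. In genus zero this holds because $\operatorname{Pic}$ of a regular conic is torsion-free and detected by degree, so after passing to a multiple degree zero implies triviality. In genus one with $\Delta$ vertical, $K_{X_\eta}\sim 0$ holds for any regular genus-one curve. Hence there is a rational function $f$ with $mL+\operatorname{div}(f)$ vertical. By Step 1 every vertical divisor $V$ with $V\cdot C=0$ for all fiber components is a $\Q$-combination of full fibers $\pi^*s$ near each fiber. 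Choosing $m$ divisible enough and accounting for the finitely many special fibers, we get $mL\sim \pi^*A$ for a $\Q$-divisor $A$ on $S$. Working on $Y$ first and pushing forward avoids needing $\Q$-Cartierness in advance. Because $\pi^*A$ is Cartier, this proves that $L$ is $\Q$-Cartier, and since $\pi^*A\sim_{\Q,\pi}0$, semiampleness over $S$ follows.

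\textbf{Main obstacle.} The hard step is Step 1: showing that the vertical part is a rational multiple of the fiber locally, and not merely numerically, on non-$\Q$-factorial $X$ where $\rho^*$ of a Weil divisor is only Mumford's $\Q$-pullback. Locally over $S$ the relevant divisors live on $Y$, and a Weil divisor on $X$ numerically trivial on all fiber components pulls back to a divisor on $Y$ supported on fibers and $\rho$-exceptional curves with zero intersection against all of them. By negative definiteness of the $\rho$-exceptional part together with Zariski's lemma, such a divisor must equal $c\,\rho^*F_s$ for some $c\in\Q$. I would carry this out on $Y$, where linear algebra on the intersection matrix of the full fiber of $Y\to S$ is rigorous, and then push forward. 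This mirrors the analytic argument of \cite{Mor26}.
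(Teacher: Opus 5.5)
Your proof is correct, but it takes a somewhat different route from the paper.

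\textbf{The paper's route.} It passes to the minimal resolution and applies Riemann--Roch on a general \emph{closed} fiber $X_t$. That fiber is an integral Gorenstein curve, which the paper gets from geometric integrality of the generic fiber. This gives $h^0(X_t,\mathcal{O}_{X_t}(m(K_X+\Delta)|_{X_t}))>0$. Semicontinuity and affineness of $S$ then produce an \emph{effective} vertical $D\sim m(K_X+\Delta)$. The paper concludes from the squeeze $0\le m(K_X+\Delta)\cdot D=D^2\le 0$ and the equality case of Zariski's lemma.

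\textbf{Your route.} You work on the generic fiber $X_\eta$ and show $mL|_{X_\eta}\sim 0$. You remove the horizontal part with a rational function $f$. Then you apply the kernel form of Zariski's lemma to the vertical divisor $m\rho^*L+\operatorname{div}_Y(f)$, which is numerically trivial on every fiber component of $Y\to S$.

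\textbf{What each approach buys.} Yours needs no effective member, no semicontinuity, and no integrality or Gorenstein statement about closed fibers. Indeed $X_\eta$ is regular simply because $X$ is normal, and Riemann--Roch and Serre duality over $k(S)$ suffice. In genus $0$, a degree-zero line bundle on $X_\eta$ has $\chi=1$, hence a nonzero section, hence is trivial; that is all your remark about conics needs. The cost is that you must use $\pi$-nefness up front to get $L\cdot C=0$ for every component $C$ of \emph{every} fiber. This relies on $\rho^*L\cdot Y_s$ being independent of $s$. That is standard (projection formula, or flatness of $Y\to S$), but you only assert it. The paper uses nefness only once, in $D^2=m(K_X+\Delta)\cdot D\ge 0$.

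\textbf{One correction.} The first claim of your Step~1 is wrong as written. If $\Delta$ has horizontal components, those components contribute to $\rho^*L\cdot C_j$. So the vertical part of $\rho^*L$ need not be numerically trivial on the fiber components, and need not be a multiple of the fiber. The ``vertical part'' is not even well defined until you fix a representative of $K_X$. What your argument actually uses, and what is true, is the corresponding statement for vertical divisors that are numerically trivial on all fiber components. You apply exactly that to $mL+\operatorname{div}(f)$ in Step~2, so the proof goes through once Step~1 is restated this way.
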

For this case, the proof parallels \cite[Corollary 4.4]{Mor26}.
To adapt the argument to arbitrary characteristic, we slightly modify the proof to avoid relying on Sard's theorem for the smoothness of general fibers.
For the reader's convenience, we give the complete details below.
\begin{proof}[Proof of Theorem \ref{lem: abandance non-big}]
Since semi-ampleness is local on $S$, we may replace $S$ with an affine open neighborhood.
Taking the Stein factorization of $\pi \colon X \to S$, we may assume that $S$ is a smooth affine curve and $\pi$ has connected fibers.
To prove the theorem, 
it suffices to show that $K_{X}+\Delta\sim_{\Q,\pi}0$.
Replacing $X$ with its minimal resolution, we may assume that $X$ is smooth.

Since $X$ is smooth and $S$ is a smooth curve, the morphism $\pi$ is flat and every closed fiber $X_t = \pi^{-1}(t)$ is an effective Cartier divisor on $X$.
Moreover, since $\pi_* \mathcal{O}_X = \mathcal{O}_S$, the generic fiber of $\pi$ is geometrically integral. 
Thus, any general closed fiber $X_t$ is an integral Gorenstein curve with dualizing sheaf $\omega_{X_t} \cong \mathcal{O}_X(K_X)|_{X_t}$.
We may also assume that $X_t$ shares no common components with $\Delta$, so $\Delta|_{X_t}$ defines an effective $\mathbb{Q}$-divisor on $X_t$.

Since $\dim S = 1$ and $K_X + \Delta$ is not $\pi$-big, we have
\[
\deg \left((K_X + \Delta)|_{X_t}\right) = 0.
\]
For a sufficiently divisible positive integer $m$, let $\mathcal{L}_m := \mathcal{O}_{X_t}(m(K_X + \Delta)|_{X_t})$.
By the Riemann--Roch theorem for Gorenstein curves, we have
\[
\chi(X_t, \mathcal{L}_m) = \deg(\mathcal{L}_m) + 1 - p_a(X_t) = 1 - p_a(X_t),
\]
where $p_a(X_t) = h^1(X_t, \mathcal{O}_{X_t})$ is the arithmetic genus of $X_t$.
We consider two cases:

\medskip
\noindent
\textit{Case 1: $p_a(X_t) = 0$.}
    In this case, $\chi(X_t, \mathcal{L}_m) = 1$, and hence $h^0(X_t, \mathcal{L}_m) \ge 1$.
    Indeed, $X_t\cong \mathbb{P}^1$ and $\mathcal{L}_m\cong \mathcal{O}_{X_t}$.

\medskip
\noindent
\textit{Case 2: $p_a(X_t) \ge 1$.}
By Serre duality on the Gorenstein curve $X_t$, we have $h^0(X_t, \omega_{X_t}) = h^1(X_t, \mathcal{O}_{X_t}) = p_a(X_t) \ge 1$. Since $\deg(\omega_{X_t}) = 2 p_a(X_t) - 2$ and $\Delta|_{X_t} \ge 0$, we observe that
\[
0 = \deg\left((K_X + \Delta)|_{X_t}\right) = \deg(\omega_{X_t}) + \deg(\Delta|_{X_t}) = (2 p_a(X_t) - 2) + \deg(\Delta|_{X_t}).
\]
Since $p_a(X_t) \ge 1$ and $\deg(\Delta|_{X_t}) \ge 0$, we must have $p_a(X_t) = 1$, $\deg(\omega_{X_t}) = 0$, and $\Delta|_{X_t} = 0$. Since $\omega_{X_t}$ is a line bundle of degree $0$ with a non-zero section, we obtain $\omega_{X_t} \cong \mathcal{O}_{X_t}$. Thus, $\mathcal{L}_{m} \cong \mathcal{O}_{X_t}$, which yields $h^0(X_t, \mathcal{L}_m) = 1 > 0$.

\medskip
In both cases, we obtain $h^0\left(X_t, \mathcal{O}_{X_t}(m(K_X + \Delta)|_{X_t})\right)>0$.
By the semi-continuity theorem, $\pi_* \mathcal{O}_X(m(K_X + \Delta)) \neq 0$.
Since $S$ is affine, there exists an effective Cartier divisor $D \ge 0$ on $X$ such that $m(K_X + \Delta) \sim D$.

Since $\deg (D|_{X_t})=\deg(m(K_X + \Delta)|_{X_t})=0$, the divisor $D$ consists of $\pi$-vertical components.
In particular, by {\cite[Corollary~2.6]{Bad01})}, $D^2 \le 0$.
On the other hand, since $K_X+\Delta$ is $\pi$-nef, we have
$$
D^2=m(K_X+\Delta)\cdot D\ge 0.
$$
Therefore, $D^2 = 0$.
By the equality case of Zariski's lemma, there exists an effective $\Q$-divisor $E$ on $S$ such that $D = \pi^* E$.
This completes the proof.
\end{proof}

\section{Vanishing}\label{sec: vanishing}

In this section, we recall the vanishing theorem of \cite{Eno24}, which will be used in the proof of the abundance theorem 
in the big case.
Let $\pi\colon X\to S$ be a proper surjective morphism from a normal surface $X$ to a variety $S$ over $k$.
Let $\Delta$ be a boundary $\mathbb{R}$-divisor on $X$.

\begin{defn}
An $\mathbb{R}$-divisor $D$ on $X$ is said to be \emph{$\pi$-$\mathbb{Z}$-positive} if for every $\pi$-exceptional negative definite $\mathbb{Z}$-divisor $B = \sum a_i C_i > 0$ (i.e., $\dim \pi(\operatorname{Supp} B) = 0$ and the intersection matrix $(C_i \cdot C_j)$ is negative definite in the sense of Mumford's intersection form), there exists an irreducible component $C_i$ of $B$ such that 
\[
(D - B) \cdot C_i > 0.
\]
\end{defn}

\begin{thm}[{$\mathbb{Z}$-Zariski decomposition, \cite[Theorem 3.5]{Eno24}}] \label{thm: Z-Zariski}
For any $\pi$-pseudo-effective $\mathbb{R}$-divisor $D$ on $X$, there exists a unique decomposition 
\[
D = P_{\mathbb{Z}} + N_{\mathbb{Z}},
\]
called the $\mathbb{Z}$-Zariski decomposition of $D$, such that:
\begin{enumerate}
    \item[\textup{(i)}] $P_{\mathbb{Z}}$ is $\pi$-$\mathbb{Z}$-positive,
    \item[\textup{(ii)}] $N_{\mathbb{Z}} = 0$, or $N_{\mathbb{Z}} > 0$ is a $\pi$-exceptional negative definite $\mathbb{Z}$-divisor, and
    \item[\textup{(iii)}] $P_{\mathbb{Z}} \cdot C \le 0$ for every irreducible component $C$ of $N_{\mathbb{Z}}$.
\end{enumerate}
\end{thm}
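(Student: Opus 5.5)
We plan to prove the existence and uniqueness of the $\mathbb{Z}$-Zariski decomposition by building $N_{\mathbb{Z}}$ greedily and then checking that this greedy construction can have only one outcome. Let $\mathcal{N}$ be the set of $\pi$-exceptional negative definite $\mathbb{Z}$-divisors $B$ with $B=0$ or $B>0$ such that $(D-B)\cdot C\le 0$ for every component $C$ of $B$. Then $0\in\mathcal{N}$.

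\emph{Existence.} We first show that $\mathcal{N}$ contains a largest element. Two facts are needed.

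\begin{itemize}
\item[(a)] $\mathcal{N}$ is closed under taking maxima: if $B_1,B_2\in\mathcal{N}$, then $B_1\vee B_2$ (the componentwise maximum) also lies in $\mathcal{N}$.
\item[(b)] Every element of $\mathcal{N}$ is bounded by $N(D)$, the negative part of the $\pi$-relative Fujita--Zariski decomposition of the $\pi$-pseudo-effective divisor $D$, rounded up.
\end{itemize}

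For (a), write $B=B_1\vee B_2=B_1+B_2-(B_1\wedge B_2)$. Take a component $C$ of $B$ along which $B$ agrees with $B_1$, say. Then $(D-B)\cdot C=(D-B_1)\cdot C-(B-B_1)\cdot C\le 0$, because $B-B_1\ge 0$ does not contain $C$. For negative definiteness of $B$, we expect to show that its support lies in the negative part, which is where (b) comes in.

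For (b), the plan is a standard negativity-lemma argument: if $B\in\mathcal{N}$, then the part of $B$ exceeding $N(D)$ pairs nonnegatively against its own components, forcing it to be $0$. This bound makes the set finite, and the supports of its members are all contained in the $\pi$-exceptional negative definite locus. Hence a maximal element $N_{\mathbb{Z}}$ exists, and we set $P_{\mathbb{Z}}=D-N_{\mathbb{Z}}$; condition (iii) then holds by construction.

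\emph{$\pi$-$\mathbb{Z}$-positivity of $P_{\mathbb{Z}}$.} This is the main obstacle. Suppose $B'>0$ is $\pi$-exceptional and negative definite with $(P_{\mathbb{Z}}-B')\cdot C\le 0$ for all components $C$ of $B'$. We claim that $N_{\mathbb{Z}}+B'\in\mathcal{N}$, which contradicts maximality.
\begin{itemize}
\item For components $C$ of $B'$, the required inequality is exactly the hypothesis on $B'$.
\item For components $C$ of $N_{\mathbb{Z}}$ not in $B'$, the inequality follows since $B'\cdot C\ge 0$.
\end{itemize}
We still need negative definiteness of $N_{\mathbb{Z}}+B'$. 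We would argue this via (b): the union is again bounded by $N(D)$ plus negative definite $\pi$-exceptional curves. If this fails, we would first try Enokizono's reduction to connected negative definite configurations and the chain-connected techniques of \cite{Kon10}.

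\emph{Uniqueness.} Suppose $D=P'+N'$ is another decomposition satisfying (i)--(iii). Condition (iii) shows $N'\in\mathcal{N}$, so $N'\le N_{\mathbb{Z}}$. Assume $N'\ne N_{\mathbb{Z}}$ and set $B=N_{\mathbb{Z}}-N'>0$, which is $\pi$-exceptional and negative definite. For every component $C$ of $B$ we have
$$
(P'-B)\cdot C=(D-N_{\mathbb{Z}})\cdot C=P_{\mathbb{Z}}\cdot C\le 0.
$$
This contradicts the $\pi$-$\mathbb{Z}$-positivity of $P'$, so $N'=N_{\mathbb{Z}}$.
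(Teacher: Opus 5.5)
The paper gives no proof of this statement; it quotes it from \cite[Theorem~3.5]{Eno24}, so your proposal can only be judged on its own terms. Your strategy of taking $N_{\mathbb{Z}}$ to be the largest element of $\mathcal{N}$ is sound:
\begin{itemize}
\item The join argument in (a) is correct.
\item The negativity argument in (b) is correct. It actually gives $B\le N$ with no rounding, where $D=P+N$ is the relative Zariski decomposition. Hence $\mathcal{N}$ is finite, closed under $\vee$, and all supports lie in $\operatorname{Supp}N$.
\item Once the largest element exists, your uniqueness argument is complete.
\end{itemize}
You use the existence of the $\pi$-relative Zariski decomposition of a $\pi$-pseudo-effective divisor as a black box; that is standard and fine.

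The gap is the negative definiteness of $N_{\mathbb{Z}}+B'$ in the $\mathbb{Z}$-positivity step, which you leave open. Knowing that $N_{\mathbb{Z}}+B'$ is bounded by $N$ plus the negative definite divisor $B'$ does not make its support negative definite, so as written the contradiction with maximality is not established.

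The step closes with your own argument (b), applied to $B'':=N_{\mathbb{Z}}+B'$ and using the sharp bound $N_{\mathbb{Z}}\le N$. You have already shown that $(D-B'')\cdot C\le 0$ for every component $C$ of $B''$; that check never used negative definiteness. Write $B''-N=E^{+}-E^{-}$ with $E^{\pm}\ge 0$ having no common components.
\begin{itemize}
\item Since $N_{\mathbb{Z}}\le N$, the coefficient of $B''-N$ at any curve outside $\operatorname{Supp}B'$ is $\le 0$. Hence $\operatorname{Supp}E^{+}\subseteq\operatorname{Supp}B'$.
\item For each component $C$ of $E^{+}$, we have $E^{+}\cdot C=(B''-N)\cdot C+E^{-}\cdot C\ge P\cdot C\ge 0$, since $C$ is $\pi$-exceptional and $P$ is $\pi$-nef. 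Therefore $(E^{+})^{2}\ge 0$.
\item Negative definiteness of $\operatorname{Supp}B'$ then forces $E^{+}=0$, that is, $B''\le N$.
\end{itemize}
Consequently $\operatorname{Supp}B''\subseteq\operatorname{Supp}N$ is negative definite, so $B''\in\mathcal{N}$. Since $B''>N_{\mathbb{Z}}$, this contradicts maximality.

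No chain-connectedness or reduction to connected configurations is needed. The one point to watch is that the rounded bound $B\le\lceil N\rceil$ stated in your (b) would not suffice for this step; you need the unrounded $N_{\mathbb{Z}}\le N$ that your argument actually proves.
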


\begin{thm}[{Vanishing Theorem, \cite[Theorem 4.1]{Eno24}}] \label{thm: vanishing}
Let $D$ be a $\pi$-big $\mathbb{Z}$-divisor, and let $D = P_{\mathbb{Z}} + N_{\mathbb{Z}}$ be its $\mathbb{Z}$-Zariski decomposition. Then
\[
R^1\pi_* \mathcal{O}_X(K_X + D) \cong H^1(N_{\mathbb{Z}}, \mathcal{L}_D),
\]
where $\mathcal{L}_D := \operatorname{Coker}\left( \mathcal{O}_X(K_X + P_{\mathbb{Z}}) \hookrightarrow \mathcal{O}_X(K_X + D) \right)$, unless $\dim Y = 0$ and $\operatorname{char} k > 0$.
\end{thm}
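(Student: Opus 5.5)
The plan is to compare $K_X+D$ with $K_X+P_{\mathbb{Z}}$. Since $N_{\mathbb{Z}}$ is an effective $\mathbb{Z}$-divisor, there is an inclusion $\mathcal{O}_X(K_X+P_{\mathbb{Z}})\hookrightarrow\mathcal{O}_X(K_X+D)$, and its cokernel $\mathcal{L}_D$ is supported on $\operatorname{Supp}N_{\mathbb{Z}}$, which is contracted to points by $\pi$. So $R^i\pi_*\mathcal{L}_D$ is the skyscraper $H^i(N_{\mathbb{Z}},\mathcal{L}_D)$. The short exact sequence
\[
0\to\mathcal{O}_X(K_X+P_{\mathbb{Z}})\to\mathcal{O}_X(K_X+D)\to\mathcal{L}_D\to 0
\]
then gives the exact sequence
\[
R^1\pi_*\mathcal{O}_X(K_X+P_{\mathbb{Z}})\to R^1\pi_*\mathcal{O}_X(K_X+D)\to H^1(N_{\mathbb{Z}},\mathcal{L}_D)\to R^2\pi_*\mathcal{O}_X(K_X+P_{\mathbb{Z}}).
\]
The theorem therefore reduces to two vanishings: $R^1\pi_*\mathcal{O}_X(K_X+P_{\mathbb{Z}})=0$ and $R^2\pi_*\mathcal{O}_X(K_X+P_{\mathbb{Z}})=0$.

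The $R^2$ term is easy.
\begin{itemize}
\item If $\dim S\ge 1$, the fibres of $\pi$ have dimension at most one, so $R^2\pi_*$ vanishes.
\item If $S$ is a point, Serre duality on the normal surface gives $H^2(X,\mathcal{O}_X(K_X+P_{\mathbb{Z}}))^\vee\cong H^0(X,\mathcal{O}_X(-P_{\mathbb{Z}}))$. This is zero because $P_{\mathbb{Z}}$ differs from the big divisor $D$ by the negative definite exceptional part, so it is big.
\end{itemize}

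The main step, and the one I expect to be the real obstacle, is the Kawamata--Viehweg type vanishing $R^1\pi_*\mathcal{O}_X(K_X+P_{\mathbb{Z}})=0$ for the $\pi$-$\mathbb{Z}$-positive, $\pi$-big divisor $P_{\mathbb{Z}}$.

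In the relative case, by the theorem on formal functions it suffices to show
\[
H^1\bigl(Z,\mathcal{O}_X(K_X+P_{\mathbb{Z}})|_Z\bigr)=0
\]
for every effective $\pi$-exceptional cycle $Z$ over a point; I would work on the minimal resolution and push forward. By duality on $Z$, this group is dual to $H^0(Z,\mathcal{O}_Z(Z-P_{\mathbb{Z}}))$. I would show the latter vanishes by induction on $Z$ in the style of Ramanujam/Konno. Given a nonzero section, pass to the minimal subcycle $B\le Z$ on which it is nowhere vanishing on components; such a $B$ is negative definite and chain-connected, so $B-P_{\mathbb{Z}}$ has nonnegative degree on every component of $B$. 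This contradicts $\mathbb{Z}$-positivity, which gives a component $C_i$ with $(P_{\mathbb{Z}}-B)\cdot C_i>0$. The bookkeeping with Mumford's rational intersection numbers at the singular points, and the use of chain-connectedness (\cite{Kon10}, \cite{Eno23}) to exclude sections vanishing on some components, is where care is needed.

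In the absolute case, $H^1(X,K_X+P_{\mathbb{Z}})$ is dual to $H^1(X,-P_{\mathbb{Z}})$. I would kill it by the same cycle argument together with a Mumford/Ramanujam-type vanishing for big and nef parts, which requires characteristic zero (Kodaira vanishing). This is why the case $\dim S=0$, $\operatorname{char}k>0$ is excluded. With both vanishings in hand, the displayed sequence yields the isomorphism $R^1\pi_*\mathcal{O}_X(K_X+D)\cong H^1(N_{\mathbb{Z}},\mathcal{L}_D)$.
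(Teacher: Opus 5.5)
The paper does not prove this statement; it quotes it from \cite[Theorem~4.1]{Eno24}. Your reduction via $0\to\mathcal{O}_X(K_X+P_{\mathbb Z})\to\mathcal{O}_X(K_X+D)\to\mathcal{L}_D\to 0$ to the vanishing of $R^i\pi_*\mathcal{O}_X(K_X+P_{\mathbb Z})$ for $i=1,2$ is sound, and so is your handling of $R^2$ (up to one fix below). But the substance of the theorem is $R^1\pi_*\mathcal{O}_X(K_X+P_{\mathbb Z})=0$ for a $\pi$-$\mathbb{Z}$-positive \emph{Weil} divisor on a possibly singular normal surface, and your proposal does not establish it.

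The cycle/duality argument you sketch is the one for line bundles on a smooth surface. Here $\mathcal{O}_X(K_X+P_{\mathbb Z})$ is not invertible, and $\mathbb{Z}$-positivity concerns Mumford's rational intersection numbers on $X$. This is exactly the case the paper needs: for example, $K_X+D-\Delta_M$ in Lemma~\ref{lem: component of coeff 1} need not be Cartier along $\Delta_M$.

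Your remedy, ``work on the minimal resolution $\rho\colon Y\to X$ and push forward'', does not work as stated.
\begin{itemize}
\item If you choose $L$ with $\rho_*\mathcal{O}_Y(K_Y+L)=\mathcal{O}_X(K_X+P_{\mathbb Z})$, vanishing upstairs is false in general. At a simple elliptic point with exceptional curve $E$ and $P_{\mathbb Z}$ Cartier there, $\rho_*\omega_Y\neq\omega_X$ forces $L=\rho^*P_{\mathbb Z}+aE$ near $E$ with $a\ge1$. Then $R^1\rho_*\mathcal{O}_Y(K_Y+L)\neq 0$; for $a=1$ it is $H^1(E,\omega_E)\cong k$. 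So for $\dim S\ge 1$, Leray gives $R^1(\pi\circ\rho)_*\mathcal{O}_Y(K_Y+L)\twoheadrightarrow\pi_*R^1\rho_*\mathcal{O}_Y(K_Y+L)\neq 0$.
\item If you only require $\rho_*L=P_{\mathbb Z}$ (e.g.\ $L=\lceil\rho^*P_{\mathbb Z}\rceil$), the push-forward step is harmless, since $\rho_*\mathcal{O}_Y(K_Y+L)\subseteq\mathcal{O}_X(K_X+P_{\mathbb Z})$ has finite-length cokernel. But you must then show $L$ is $\mathbb{Z}$-positive relative to $\pi\circ\rho$, on cycles mixing $\rho$-exceptional curves with strict transforms. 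Deducing this from $\mathbb{Z}$-positivity on $X$ is a genuine argument, and you leave it open.
\end{itemize}
The natural route is to run your argument on $X$ itself, using $\mathcal{O}_B(L):=\operatorname{Coker}(\mathcal{O}_X(L-B)\to\mathcal{O}_X(L))$, the formalism behind $\mathcal{L}_D$ (cf.\ \cite{Eno23}). Two facts are needed:
\begin{itemize}
\item Reflexive rank-one sheaves on a normal surface are maximal Cohen--Macaulay. Hence $\mathcal{E}xt^1_{\mathcal{O}_X}(\mathcal{O}_B(K_X+L),\omega_X)\cong\mathcal{O}_B(B-L)$, which gives $H^1(B,\mathcal{O}_B(K_X+L))^\vee\cong H^0(B,\mathcal{O}_B(B-L))$.
\item A nonzero section of $\mathcal{O}_C(L)$ on an integral curve $C$ yields one of $\mathcal{O}_{\widetilde C}(\lfloor\rho^*L\rfloor)$, whence $L\cdot C\ge\lfloor\rho^*L\rfloor\cdot\widetilde C\ge 0$.
\end{itemize}
These are the facts your ``bookkeeping'' has to supply.

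Three smaller points.
\begin{itemize}
\item Over a curve $S$, the subcycles $B\le nF$ your argument produces need not be negative definite (e.g.\ $B=F$), so $\mathbb{Z}$-positivity does not apply directly. Use $\pi$-bigness: if $\operatorname{Supp}B$ contains a connected component $F_j$ of the fibre, then $(P_{\mathbb Z}-B)\cdot F_j=P_{\mathbb Z}\cdot F_j>0$; otherwise $B$ is negative definite.
\item In the absolute case you need Kawamata--Viehweg for Weil $\mathbb{Q}$-divisors on $X$. With the Zariski decomposition $P_{\mathbb Z}=P'+N'$, this means $H^1(X,\mathcal{O}_X(K_X+\lceil P'\rceil))=0$. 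You can get it from Kawamata--Viehweg on $Y$ for $\lceil\rho^*P'\rceil$ together with the finite-colength inclusion above. Then apply the cycle argument on $\lfloor N'\rfloor=P_{\mathbb Z}-\lceil P'\rceil$. Mumford/Ramanujam vanishing for integral nef and big divisors does not handle the fractional part.
\item ``$P_{\mathbb Z}$ differs from $D$ by a negative definite part, so it is big'' is not a valid inference: $H-E$ on the blow-up of $\mathbb{P}^2$ at a point is not big. Argue instead that $-P_{\mathbb Z}\sim G\ge 0$ would give $h^0(mD)=h^0(m(N_{\mathbb Z}-G))\le h^0(mN_{\mathbb Z})=1$, contradicting bigness of $D$.
\end{itemize}
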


\begin{rem} \label{rem: vanishing in positive char}
When $\dim S = 0$ and $\operatorname{char} k > 0$, the same assertion holds under the assumption that 
\[
\dim |D| \ge \dim_k \left( \ker (F^m \colon H^1(\mathcal{O}_X) \to H^1(\mathcal{O}_X)) \right) \quad \text{for } m \gg 0,
\]
by {\cite[Theorem 4.17]{Eno23}}.
\end{rem}

In the remainder of this section, we reformulate this vanishing theorem in a form that is convenient for the proof of the abundance theorem 
in the big case.

\begin{defn}
Let $M$ be a $\pi$-nef $\mathbb{R}$-divisor on $X$ (not necessarily $\mathbb{R}$-Cartier). An irreducible component $C$ of $\Delta$ is called an \emph{$M$-trivial lc place} if
\[
\operatorname{coeff}_C \Delta = 1, \quad C \text{ is } \pi\text{-exceptional}, \quad \text{and} \quad M \cdot C = 0.
\]
\end{defn}

\begin{lem}
Let $M$ be a $\pi$-nef $\mathbb{R}$-divisor and put $D:=\Delta+M$. 
Then the $\mathbb{Z}$-Zariski decomposition of $D$ can be written as $D = P_{\mathbb{Z}} + N_{\mathbb{Z}}$, where
\[
N_{\mathbb{Z}} := \sum_{\lambda} \Delta_\lambda
\]
is the sum of all connected components $\Delta_\lambda$ of $\Delta$ such that:
\begin{itemize}
    \item every irreducible component of $\Delta_\lambda$ is an $M$-trivial lc place, and
    \item $\Delta_\lambda$ is negative definite (which holds automatically if $M$ is $\pi$-big).
\end{itemize}
\end{lem}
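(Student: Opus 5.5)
By the uniqueness part of Theorem~\ref{thm: Z-Zariski}, it suffices to check that the pair $P_{\mathbb{Z}}:=D-N_{\mathbb{Z}}$, $N_{\mathbb{Z}}:=\sum_\lambda\Delta_\lambda$ (with $\Delta_\lambda$ as in the statement) satisfies conditions (i)--(iii). Condition (ii) is immediate: each $\Delta_\lambda$ is a reduced $\pi$-exceptional divisor, all of its coefficients in $\Delta$ equal $1$, and it is negative definite by assumption. Distinct connected components are disjoint, so the intersection matrix of $N_{\mathbb{Z}}$ is block diagonal and hence negative definite.

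For (iii), take an irreducible component $C$ of some $\Delta_\lambda$. Then $P_{\mathbb{Z}}=M+(\Delta-N_{\mathbb{Z}})$. We have $M\cdot C=0$ because $C$ is an $M$-trivial lc place. Moreover $\Delta-N_{\mathbb{Z}}$ is effective, has no component equal to $C$, and has no component meeting $C$ from another part of $\Delta_\lambda$. However, components of $\Delta$ not in $\Delta_\lambda$ cannot meet $C$, since $\Delta_\lambda$ is a connected component of $\Delta$. Hence $(\Delta-N_{\mathbb{Z}})\cdot C=0$ and $P_{\mathbb{Z}}\cdot C=0\le 0$.

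The main work is (i), the $\pi$-$\mathbb{Z}$-positivity of $P_{\mathbb{Z}}$. Let $B=\sum b_iC_i>0$ be a $\pi$-exceptional negative definite $\mathbb{Z}$-divisor. We must find a $C_i$ with $(P_{\mathbb{Z}}-B)\cdot C_i>0$. Write $\Delta':=\Delta-N_{\mathbb{Z}}$, a boundary with no component in $N_{\mathbb{Z}}$, so $P_{\mathbb{Z}}-B=M+\Delta'-B$. Since $M$ is $\pi$-nef, it suffices to find $C_i$ with $(\Delta'-B)\cdot C_i>0$, or with $M\cdot C_i>0$ and $(\Delta'-B)\cdot C_i\ge 0$.

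The key step is to replace $B$ by $B':=B-\min(B,\lfloor\Delta'\rfloor)$-type truncations. More concretely, let $B_0:=B-(B\wedge \Delta'_{=1})$, where $\Delta'_{=1}$ is the reduced part of $\Delta'$. If $B_0\neq 0$, negative definiteness gives a component $C$ of $B_0$ with $B_0\cdot C<0$. For this $C$ one has $(\Delta'-B)\cdot C\ge -(B\wedge\Delta'_{=1})\cdot C+\Delta'\cdot C - B_0\cdot C$. Because $C\not\le\Delta'_{=1}$ or $C$ appears in $B$ with coefficient exceeding its coefficient in $\Delta'$, the combination $\Delta'-(B\wedge\Delta'_{=1})$ is effective away from $C$, and the estimate yields $>0$. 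This is the standard argument that a boundary is $\mathbb{Z}$-positive against integral divisors: coefficients at most $1$ mean $\Delta'-B$ has negative coefficient along each component of $B_0$.

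If instead $B\le\Delta'_{=1}$, then $B$ is a reduced $\pi$-exceptional sum of lc places of $\Delta'$. I then use connectedness. If some $C_i$ has $M\cdot C_i>0$, or meets a component of $\Delta'$ outside $B$, or if $B$ does not exhaust a connected component of $\Delta$, I choose $C_i$ at a boundary of $B$ inside its connected component to get $(\Delta'-B)\cdot C_i\ge -B\cdot C_i+(\text{positive})$. Otherwise $B$ is a union of connected components of $\Delta$ consisting of $M$-trivial lc places and is negative definite, so it was already included in $N_{\mathbb{Z}}$, which is a contradiction. I expect this case analysis, in particular handling $B$ contained in the reduced boundary but not a full component, to be the main obstacle, since one must produce strict positivity from $-B\cdot C_i$ using a negative definite chain argument.

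Finally, the parenthetical claim is that when $M$ is $\pi$-big, a reduced $\pi$-exceptional divisor $Z$ with $M\cdot C=0$ for all its components is negative definite. This follows from the Hodge index theorem: $M^2>0$ and $M\cdot Z=0$ force $Z^2<0$ unless $Z\equiv0$, and applying the same argument to every effective combination of the components shows the intersection matrix is negative definite.
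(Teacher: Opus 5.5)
Your treatment of (ii) and (iii) is fine and matches the paper, but the first case of (i) has a genuine gap. When $B_0\neq 0$ you pick a component $C$ of $B_0$ with $B_0\cdot C<0$. You then claim $(\Delta'-B)\cdot C>0$ because $\Delta'-(B\wedge\Delta'_{=1})$ is ``effective away from $C$''. That divisor contains $C$ itself with coefficient $c=\operatorname{coeff}_C\Delta'$ whenever $0<c<1$. This contributes $cC^2<0$, which $-B_0\cdot C>0$ need not compensate.

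Concretely, let $C_1,C_2$ be an $A_2$-chain of $(-2)$-curves contracted by $\pi$. Take $\Delta=0.9C_1$, any $\pi$-nef $M$ with $M\cdot C_i=0$, and $B=C_1+C_2$. Then $N_{\mathbb{Z}}=0$, $B_0=B$ and $B_0\cdot C_1=-1<0$, yet
$(P_{\mathbb{Z}}-B)\cdot C_1=(-0.1C_1-C_2)\cdot C_1=-0.8<0$.
So your selection rule can pick the wrong curve; here only $C_2$ works. The sign of $B_0\cdot C$ is simply not the right quantity to control.

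The missing idea, which is the paper's argument, is to apply negative definiteness to the negative part of $\Delta'-B$ rather than to $B_0$. Write $\Delta'-B=-X+Y$, where:
\begin{itemize}
\item $X=\sum_{C_i\subseteq\operatorname{Supp}B_0}(n_i-\operatorname{coeff}_{C_i}\Delta')C_i$ is $>0$ with support exactly $\operatorname{Supp}B_0$;
\item $Y\ge 0$ has no component in $\operatorname{Supp}B$.
\end{itemize}
Then $(P_{\mathbb{Z}}-B)\cdot X=-X^2+Y\cdot X+M\cdot X>0$, so some component of $B_0$ has positive intersection with $P_{\mathbb{Z}}-B$. In the paper's notation this $X$ is the divisor $C:=B'-\Delta'$.

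Your second case is easier than you expect, and your bound $(\Delta'-B)\cdot C_i\ge -B\cdot C_i+(\text{positive})$ is not the relevant one. If $B\le\lfloor\Delta'\rfloor$, then $\Delta'-B\ge 0$ shares no component with $B$. Hence $(P_{\mathbb{Z}}-B)\cdot C_i\ge 0$ for every $i$, with no chain argument needed. Strict positivity fails for all $i$ only if $B$ meets no other component of $\Delta'$ and is $M$-trivial. That is exactly the $N_{\mathbb{Z}}$ contradiction you describe. The paper packages this by showing $(P_{\mathbb{Z}}-B)\cdot B\ge 0$, with equality forcing that situation.
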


\begin{proof}
We check conditions (i)--(iii) in Theorem~\ref{thm: Z-Zariski} for the $\mathbb{Z}$-Zariski decomposition.
Condition (ii) is clear from the construction of $N_{\mathbb{Z}}$.
For (iii), take any irreducible component $C \le N_{\mathbb{Z}}$. We have
\[
-P_{\mathbb{Z}} \cdot C = (N_{\mathbb{Z}} - D) \cdot C = (N_{\mathbb{Z}} - \Delta) \cdot C - M \cdot C = 0,
\]
since $C$ and $\Delta-N_{\mathbb{Z}}$ do not intersect and $C$ is an $M$-trivial lc place. Thus $P_{\mathbb{Z}} \cdot C = 0$.
For (i), replacing $\Delta$ by $\Delta - N_{\mathbb{Z}}$, it suffices to show that $D$ is $\pi$-$\mathbb{Z}$-positive under the assumption $N_{\mathbb{Z}} = 0$. That is, for any negative definite $\pi$-exceptional $\mathbb{Z}$-divisor $B > 0$, there exists an effective divisor $C$ such that $\operatorname{Supp} C \subseteq \operatorname{Supp} B$ and $(D - B) \cdot C > 0$.
Let $B = \sum_i n_i C_i$ be the irreducible decomposition. Define
\[
B' := \sum_{n_i \ge 2 \text{ or } \operatorname{coeff}_{C_i}\Delta < 1} n_i C_i \quad \text{and} \quad B'' := B - B'.
\]
Let $\Delta'$ be the maximal subdivisor of $\Delta$ such that $\operatorname{Supp} \Delta' \subseteq \operatorname{Supp} B'$, and set $\Delta'' := \Delta - \Delta'$. Then we can write
\[
D - B = (\Delta' - B') + (\Delta'' - B'') + M.
\]
By construction, the following properties hold:
\begin{itemize}
    \item $\Delta' - B' \le 0$, with equality holding if and only if $B' = 0$.
    \item  $\Delta'' - B'' \ge 0$, and it shares no components with $B$.
\end{itemize}
Now we divide into two cases.

If $B' > 0$, set $C := B' - \Delta' > 0$. Then we have
    \[
    (D - B) \cdot C = -C^2 + (\Delta'' - B'') \cdot C + M \cdot C > 0,
    \]
    since $-C^2 > 0$, $(\Delta'' - B'') \cdot C \ge 0$, and $M \cdot C \ge 0$.

If $B' = 0$, then $B = B'' > 0$. Suppose to the contrary that $(D - B) \cdot B \le 0$. Since
    \[
    (D - B) \cdot B = (\Delta'' - B'') \cdot B'' + M \cdot B \ge 0
    \]
    (owing to $(\Delta'' - B'') \cdot B'' \ge 0$ and $M \cdot B \ge 0$), we must have $(D - B) \cdot B = 0$. This implies 
    \[
    \operatorname{Supp} B \cap \operatorname{Supp}(\Delta - B) = \emptyset
    \]
    and $M$ is numerically trivial on $B$. Thus, each connected component of $B$ is a connected component of $\Delta$, and every irreducible component of $B$ is an $M$-trivial lc place. Consequently, $N_{\mathbb{Z}} > 0$, which contradicts the hypothesis $N_{\mathbb{Z}} = 0$.
    Therefore, taking $C:=B$, we obtain $(D - B) \cdot C > 0$, completing the proof of (i).
\end{proof}

Combining this with Theorem~\ref{thm: vanishing} and Remark~\ref{rem: vanishing in positive char}, we obtain the following corollary:
\begin{cor}\label{cor: vanishing}
Let $M$ be a $\pi$-nef and $\pi$-big $\mathbb{R}$-divisor such that $D := \Delta + M$ is a $\mathbb{Z}$-divisor. Assume that $\Delta$ has no connected components consisting entirely of $M$-trivial lc places. 
If $\dim S = 0$ and $\operatorname{char} k > 0$, we additionally assume 
$$
\dim |D| \ge \dim_k \left( \ker (F^m \colon H^1(\mathcal{O}_X) \to H^1(\mathcal{O}_X)) \right)
$$
for $m \gg 0$.
Then
\[
R^1 \pi_* \mathcal{O}_X(K_X + D) = 0.
\]
\end{cor}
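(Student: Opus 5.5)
The plan is to derive Corollary~\ref{cor: vanishing} directly from the preceding lemma together with Theorem~\ref{thm: vanishing} (and Remark~\ref{rem: vanishing in positive char} in the absolute, positive characteristic case). First, since $M$ is $\pi$-nef and $\pi$-big and $\Delta$ is effective, $D=\Delta+M$ is a $\pi$-big $\mathbb{Z}$-divisor, hence in particular $\pi$-pseudo-effective. So its $\mathbb{Z}$-Zariski decomposition $D=P_{\mathbb{Z}}+N_{\mathbb{Z}}$ exists by Theorem~\ref{thm: Z-Zariski}.

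By the preceding lemma, $N_{\mathbb{Z}}$ is the sum of those connected components $\Delta_\lambda$ of $\Delta$ all of whose irreducible components are $M$-trivial lc places and which are negative definite. The negative definiteness is automatic here because $M$ is $\pi$-big. Let me justify this briefly. A $\pi$-exceptional connected curve configuration $Z$ on which $M$ is numerically trivial must be negative definite. Otherwise, by the Hodge index theorem applied to the intersection form on $\pi$-exceptional curves over a point, some nonzero combination $Z'$ supported on $Z$ would satisfy $Z'^2\ge 0$. For $\dim S>0$ this is impossible, since $\pi$-exceptional curves over a point are negative definite anyway (Zariski's lemma). For $\dim S=0$, a nef and big $M$ with $M\cdot Z'=0$ forces $Z'^2<0$ by Hodge index. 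The hypothesis says that no connected component of $\Delta$ consists entirely of $M$-trivial lc places, so no such $\Delta_\lambda$ exists and $N_{\mathbb{Z}}=0$.

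Now apply Theorem~\ref{thm: vanishing}. Since $N_{\mathbb{Z}}=0$, we have $P_{\mathbb{Z}}=D$, so the sheaf $\mathcal{L}_D$ is the cokernel of the identity map and vanishes. We also have $H^1(N_{\mathbb{Z}},\mathcal{L}_D)=0$ because $N_{\mathbb{Z}}$ is the empty scheme. Therefore $R^1\pi_*\mathcal{O}_X(K_X+D)=0$. In the exceptional case $\dim S=0$ and $\operatorname{char}k>0$, the additional hypothesis on $\dim|D|$ is exactly the one required in Remark~\ref{rem: vanishing in positive char}, so the same identification holds and the same conclusion follows.

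The only point needing care is the claim that negative definiteness is automatic. I would present it as the main (mild) obstacle. Even if that claim were shaky, it does not affect the argument: the lemma includes components in $N_{\mathbb{Z}}$ only if they are negative definite and consist of $M$-trivial lc places, and the hypothesis already excludes every component consisting of $M$-trivial lc places. So $N_{\mathbb{Z}}=0$ in any case, and the corollary follows.
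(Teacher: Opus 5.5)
Your argument is correct and is exactly the paper's: the preceding lemma together with the hypothesis forces $N_{\mathbb{Z}}=0$, so $P_{\mathbb{Z}}=D$ and $\mathcal{L}_D=0$, and then Theorem~\ref{thm: vanishing} (or Remark~\ref{rem: vanishing in positive char} when $\dim S=0$ and $\operatorname{char}k>0$) gives $R^1\pi_*\mathcal{O}_X(K_X+D)=0$. One small inaccuracy in your aside, which as you note does not affect the argument: when $\dim S=1$, a connected configuration supported on a whole fiber $F$ is only negative semidefinite by Zariski's lemma, and it is the $\pi$-bigness of $M$ (which gives $M\cdot F>0$) that rules out $M$ being numerically trivial on all of its components.
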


\section{Big case} \label{sec: big case}
In this section, we prove the abundance theorem for the big case:

\begin{thm}\label{thm: abundance big case}
Let $\pi\colon X\to S$ be a proper surjective morphism from a normal surface $X$ to a variety $S$ over $k$.
Let $\Delta$ be a boundary $\mathbb{Q}$-divisor on $X$ such that $K_X+\Delta$ is $\mathbb{Q}$-Cartier.
If $K_X+\Delta$ is $\pi$-nef and $\pi$-big, then it is $\pi$-semiample.
\end{thm}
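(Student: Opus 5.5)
Throughout, write $L:=K_X+\Delta$; it is $\pi$-nef and $\pi$-big. The plan follows the overall structure of Fujino's proof for $\Q$-factorial log surfaces: first show that $mL$ is $\pi$-generated away from the null locus, then handle the null locus. The MMP steps in that argument are replaced by Corollary~\ref{cor: vanishing}.

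\textbf{Reduction to the null locus.} Let $\Sigma$ be the union of the $\pi$-exceptional curves $C$ with $L\cdot C=0$. Since $L$ is $\pi$-big, Hodge index shows $\Sigma$ is negative definite, so it has finitely many connected components $\Sigma_1,\dots,\Sigma_r$. Take $m$ sufficiently divisible and a point $x\in X$.
\begin{itemize}
\item If $x\notin\Sigma$, blow up $x$ (or use $\mathfrak m_x$-twisted vanishing), write $mL=K_X+\Delta+(m-1)L$, and apply Corollary~\ref{cor: vanishing} with $M=(m-1)L$.
\item In this application, the $M$-trivial lc places are exactly the coefficient-one components of $\Delta$ lying in $\Sigma$, so the hypothesis of the corollary can fail only along connected components of $\Sigma$.
\item Away from $\Sigma$, the vanishing $R^1\pi_*\mathcal O_X(mL-\mathfrak m_x\text{-twist})=0$ lifts a section that does not vanish at $x$.
\end{itemize}
So the whole problem concentrates on $\Sigma$. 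Contracting each $\Sigma_i$ (Artin/Grauert, possibly only to an algebraic space), it is enough to show that $L$ is $\Q$-linearly trivial on a neighbourhood of each $\Sigma_i$ relative to its contraction, i.e.\ that $mL|_{Z}\cong\mathcal O_Z$ for all thickenings $Z$ supported on $\Sigma_i$. Then the sections produced above do not vanish on $\Sigma_i$ either.

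\textbf{The key step on $\Sigma$.} This is the step I expect to be the main obstacle; the paper flags an adjunction result (Lemma~\ref{lem: adjunction}) for it. Let $\Sigma_i^{=1}$ be the part of $\Sigma_i$ consisting of coefficient-one components of $\Delta$ (the would-be lc centres).
\begin{itemize}
\item \emph{When $\Sigma_i^{=1}=\emptyset$:} I would show directly that $H^1$ of twists of $\mathcal O$ on the fundamental cycle of $\Sigma_i$ vanishes. The argument is an $\mathcal O$-version of the vanishing on the cycle, using $\Delta$ with coefficients $<1$ there, which gives rationality-type behaviour. Then $\operatorname{Pic}$ of every thickening injects into $\operatorname{Pic}$ of the reduced curve. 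Since $L$ is numerically trivial on each component, which is rational, $L$ is trivial there after passing to a multiple.
\item \emph{When $\Sigma_i^{=1}\neq\emptyset$:} I would use adjunction. Set $C:=\Sigma_i^{=1}$ and write $(K_X+\Delta)|_C=K_C+\Delta_C$ for a suitable different $\Delta_C$ on the (possibly reducible) curve $C$. Then $K_C+\Delta_C\equiv0$, and the one-dimensional abundance (an easy curve computation, including cycles of rational curves and elliptic curves) gives torsion. The delicate case is an elliptic curve or a cycle of rational curves with $\Delta_C=0$. There one must show that the degree-zero bundle $L|_C$ is torsion and not just numerically trivial. For this I would use the $\Q$-Cartier-ness: $mL$ restricted to a $\rho$-exceptional neighbourhood is trivial, mirroring the thickening argument with Lemma~\ref{lem: 1st order thickening} in the proof of Theorem~\ref{thm: log CY}.
\item \emph{Lifting to thickenings.} Once $mL|_C$ is trivial, I would lift the trivialisation to all infinitesimal thickenings through the sequences of Lemma~\ref{lem: 1st order thickening}. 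The obstruction groups $H^1(\mathcal I)$ are killed by Corollary~\ref{cor: vanishing}, applied to $D=$ (suitable round-down) with the lc places removed so that the hypothesis holds.
\end{itemize}

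\textbf{Positive characteristic and conclusion.} In the case $\dim S=0$, $\operatorname{char}k>0$, the extra hypothesis of Corollary~\ref{cor: vanishing} requires $\dim|D|$ to be large. This is achieved by taking $m\gg0$, since $L$ is big and $h^0(mL)$ grows quadratically, while the Frobenius kernel bound is a fixed constant. The vanishing then yields base-point freeness of $|mL|$ relative to $S$, so $L$ is $\pi$-semiample.
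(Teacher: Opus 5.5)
Your route differs from the paper's, and it has a genuine gap at exactly the step you flag.

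\textbf{The missing idea.} For a connected curve $C\subseteq\Sigma_i^{=1}$ with $H^0(\omega_C)\neq 0$, you propose to show $L|_C$ is torsion using $\Q$-Cartierness, ``mirroring'' the thickening argument in Theorem~\ref{thm: log CY}. That argument worked only because the curve there was $\rho$-exceptional, i.e.\ contracted to a point of $X$. So $\Q$-Cartierness of $K_X+\Delta$ forced triviality on a whole neighbourhood of that curve. Here $C$ is a complete curve on $X$ itself, and $\Q$-Cartierness of $L$ gives nothing along it. Contracting $\Sigma_i$ first would presuppose that $L$ descends, which is what you want to prove. Nor does a different $\Delta_C$ help: on a non-$\Q$-factorial $X$ it is defined only numerically, so it yields $K_C+\Delta_C\equiv 0$ and no more.

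The paper's Lemma~\ref{lem: restriction is trivial} argues with sections rather than torsion in $\operatorname{Pic}$:
\begin{enumerate}
\item By Lemma~\ref{lem: adjunction}, $\omega_X(\Delta_\lambda)\otimes\mathcal{O}_{\Delta_\lambda}\to\omega_{\Delta_\lambda}$ is surjective with zero-dimensional kernel, so a nonzero section $s$ of $\omega_{\Delta_\lambda}$ lifts.
\item With $t$ the canonical section of $m(\Delta-\Delta_\lambda)$, the section $(t|_{\Delta_\lambda}\otimes s^{\otimes m})^{**}$ is a nonzero section of $\mathcal{O}_X(m(K_X+\Delta))|_{\Delta_\lambda}$.
\item A nonzero section of a line bundle of degree $0$ on every component of a connected reduced curve vanishes nowhere, so the bundle is trivial.
\item If instead $H^0(\omega_{\Delta_\lambda})=0$, then $H^1(\mathcal{O}_{\Delta_\lambda})=0$, so $\operatorname{Pic}^0(\Delta_\lambda)=0$ and the bundle is again trivial.
\end{enumerate}

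\textbf{Other unproved links.}
\begin{enumerate}
\item Triviality of $mL$ on thickenings of $\Sigma_i$ does not make ``the sections produced above'' nonvanishing on $\Sigma_i$. It only shows each section restricts to a constant there, possibly $0$. To close this you would have to descend $mL$ to the contraction of $\Sigma_i$, which may only be an algebraic space, and apply a relative Nakai--Moishezon criterion there. Alternatively you would need a vanishing that lifts from a thickening $Z$ supported on $\Sigma_i$, which needs an extra positivity argument for $(m-1)L-Z$.
\item Your pointwise $\mathfrak m_x$-twisted lifting off $\Sigma$ silently needs a Seshadri-type bound for $L$ at $x$.
\item At points where $(X,\Delta)$ is not lc (e.g.\ non-lc singular points of $X$), the pulled-back boundary on a resolution has coefficients $>1$, so Corollary~\ref{cor: vanishing} does not apply as stated. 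The paper handles this in Lemma~\ref{lem: 0-dim non-klt} by subtracting $\lfloor\Delta_Y\rfloor-\rho_*^{-1}(\lfloor\Delta\rfloor-\Delta_M)$.
\item You never explain how to pass from $\Sigma_i^{=1}$ to the coefficient-$<1$ curves of the same $\Sigma_i$. That step adds new reduced components, not just first-order thickenings.
\end{enumerate}

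\textbf{How the paper avoids this.} It proves only that no non-klt centre of $(X,\Delta)$ lies in $\operatorname{Bs}|m(K_X+\Delta)/S|$ (Lemmas~\ref{lem: component of coeff 1} and \ref{lem: 0-dim non-klt}, using Corollary~\ref{cor: vanishing} after removing $\Delta_M$). If the base locus is nonempty, let $\Theta$ be a sum of three general members and $c$ the lc threshold. Adding $c\Theta$ creates an lc centre inside the base locus, and the same lemmas applied to $(X,\Delta+c\Theta)$ remove it. Noetherian induction then finishes, with no analysis of thickenings or of the coefficient-$<1$ part of the null locus.
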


To prove this, we use the following adjunction result, which generalizes {\cite[Lemma~4.4]{Fuj12}} from integral curves to reduced curves:

\begin{lem}[{Adjunction}] \label{lem: adjunction}
Let $X$ be a normal surface, and $\Delta_\lambda$ a reduced closed subscheme of pure dimension one. 
Then we have the short exact sequence
$$
0 \to \mathcal{T} \to \omega_X(\Delta_\lambda)\otimes\mathcal{O}_{\Delta_\lambda}\to \omega_{\Delta_\lambda} \to 0
$$
where $\mathcal{T}$ is a sheaf whose support has dimension zero.
\end{lem}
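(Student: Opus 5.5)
The plan is to apply duality theory on the Cohen--Macaulay scheme $X$ to the inclusion of the Weil divisor $\Delta_\lambda$, and to compare the reflexive sheaf $\omega_X(\Delta_\lambda)$ with its restriction away from a finite set. Since the question is local on $X$, we work on a neighborhood of $\Delta_\lambda$. Recall that $X$ is normal of dimension two, hence Cohen--Macaulay, and that $\omega_X$ is a dualizing sheaf. The subscheme $\Delta_\lambda$ is a reduced curve, hence Cohen--Macaulay of dimension one, so its dualizing sheaf is
$$
\omega_{\Delta_\lambda}\cong \mathcal{E}xt^1_{\mathcal{O}_X}(\mathcal{O}_{\Delta_\lambda},\omega_X).
$$

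First we apply $\mathcal{H}om(-,\omega_X)$ to $0\to \mathcal{O}_X(-\Delta_\lambda)\to \mathcal{O}_X\to \mathcal{O}_{\Delta_\lambda}\to 0$, where $\mathcal{O}_X(-\Delta_\lambda)$ denotes the ideal sheaf of $\Delta_\lambda$. Since $\mathcal{H}om(\mathcal{O}_{\Delta_\lambda},\omega_X)=0$ (torsion into torsion-free) and $\mathcal{E}xt^1(\mathcal{O}_X,\omega_X)=0$, this yields
$$
0\to \omega_X\to \mathcal{H}om(\mathcal{O}_X(-\Delta_\lambda),\omega_X)\to \omega_{\Delta_\lambda}\to 0 .
$$
The middle term is reflexive of rank one and agrees with $\omega_X(\Delta_\lambda):=\mathcal{O}_X(K_X+\Delta_\lambda)$, because both are reflexive and coincide on the smooth locus, whose complement has codimension two. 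We therefore obtain a surjection $\omega_X(\Delta_\lambda)\to\omega_{\Delta_\lambda}$ whose kernel is $\omega_X$. This is the residue map.

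Next we take the natural map $\omega_X\otimes\mathcal{O}_X(-\Delta_\lambda)\to \omega_X$ (reflexive hull). Its image lands in $\omega_X$, and the cokernel of $\omega_X(\Delta_\lambda)\otimes\mathcal{O}_X(-\Delta_\lambda)\to\omega_X(\Delta_\lambda)$ is exactly $\omega_X(\Delta_\lambda)\otimes\mathcal{O}_{\Delta_\lambda}$. Composing, the surjection $\omega_X(\Delta_\lambda)\to\omega_{\Delta_\lambda}$ kills the image of $\omega_X(\Delta_\lambda)\otimes\mathcal{O}_X(-\Delta_\lambda)$, since that image lies in $\omega_X$. It therefore factors through a surjection
$$
\omega_X(\Delta_\lambda)\otimes\mathcal{O}_{\Delta_\lambda}\to\omega_{\Delta_\lambda},
$$
and we define $\mathcal{T}$ to be its kernel. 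Explicitly, $\mathcal{T}=\omega_X/\operatorname{Im}(\omega_X(\Delta_\lambda)\otimes\mathcal{O}_X(-\Delta_\lambda))$.

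It remains to show that $\mathcal{T}$ has zero-dimensional support, and we expect this to be the only real point to check. Let $Z$ be the finite set consisting of $\operatorname{Sing}X$ together with the points of $\Delta_\lambda$ that are singular or lie on another component. Outside $Z$, both $X$ and $\Delta_\lambda$ are smooth, $\Delta_\lambda$ is Cartier, and $\mathcal{O}_X(-\Delta_\lambda)$ and $\omega_X$ are invertible. There the map $\omega_X(\Delta_\lambda)\otimes\mathcal{O}_X(-\Delta_\lambda)\to\omega_X$ is an isomorphism, so $\mathcal{T}$ vanishes off $Z$. This is just the classical adjunction $\omega_{\Delta_\lambda}=(\omega_X\otimes\mathcal{O}_X(\Delta_\lambda))|_{\Delta_\lambda}$ for Cartier divisors.

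In fact it suffices to remove only $\operatorname{Sing}X$, since on a smooth surface any reduced curve is Cartier and the same argument applies. Hence $\operatorname{Supp}\mathcal{T}\subseteq \operatorname{Sing}X\cap\Delta_\lambda$, which is finite because $X$ is normal. This gives the asserted exact sequence.
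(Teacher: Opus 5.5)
Your argument is correct and is essentially the paper's. The paper likewise uses that $\Delta_\lambda$ is Cohen--Macaulay to get $\omega_{\Delta_\lambda}\cong\mathcal{E}xt^1_{\mathcal{O}_X}(\mathcal{O}_{\Delta_\lambda},\omega_X)$, and then runs the argument of \cite[Lemma~4.4]{Fuj12}: dualize the ideal-sheaf sequence into $\omega_X$, identify $\mathcal{H}om(\mathcal{O}_X(-\Delta_\lambda),\omega_X)$ with the reflexive sheaf $\omega_X(\Delta_\lambda)$, and factor the residue map through $\omega_X(\Delta_\lambda)\otimes\mathcal{O}_{\Delta_\lambda}$, whose kernel is supported on $\operatorname{Sing}X\cap\Delta_\lambda$. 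There is one notational slip in your factoring step: the map you mean is $\omega_X(\Delta_\lambda)\otimes\mathcal{O}_X(-\Delta_\lambda)\to\omega_X$, not $\omega_X\otimes\mathcal{O}_X(-\Delta_\lambda)\to\omega_X$.
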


\begin{proof}
Since $X$ is a normal surface, it satisfies Serre's condition $(S_2)$ and is thus Cohen--Macaulay. 
Similarly, $\Delta_\lambda$ is Cohen--Macaulay because it is a reduced curve ($R_0 + S_1$). 
Using the theory of dualizing complexes, we obtain
\[
\omega_{\Delta_\lambda} \cong \mathcal{E}xt^1_{\mathcal{O}_X}(\mathcal{O}_{\Delta_\lambda}, \omega_X).
\]
With this duality at hand, the assertion follows by applying exactly the same argument as in {\cite[Lemma~4.4]{Fuj12}}. 
\end{proof}

Take $m\in\Z_{>0}$ such that $m(K_X+\Delta)$ is Cartier.
Put 
$$
M:=(m-1)(K_X+\Delta),\quad D:=m(K_X+\Delta)-K_X=M+\Delta,
$$
where $M$ is a $\pi$-nef and $\pi$-big $\Q$-Cartier $\Q$-divisor and $D$ is a $\mathbb{Z}$-divisor.
Let $\Delta_M$ be the sum of all $M$-trivial lc places of $\Delta$.
Let $\Delta_M=\sum_\lambda\Delta_\lambda$ be the connected component decomposition.

\begin{lem}\label{lem: restriction is trivial} $\mathcal{O}_X(m(K_X+\Delta))|_{\Delta_\lambda}\cong\mathcal{O}_{\Delta_\lambda}$
\end{lem}
\begin{proof}

\setcounter{stepcounter}{0}
\step \label{step: non-zero is enough}
We first show that $H^0(\Delta_\lambda, \mathcal{O}_X(K_X+D)|_{\Delta_\lambda})\neq 0$ implies $\mathcal{O}_X(K_X+D)|_{\Delta_\lambda}\cong\mathcal{O}_{\Delta_\lambda}$.
Let 
$$
\nu\colon \widetilde{\Delta}_{\lambda}=\bigsqcup_{i} \widetilde{C}_{i}
\to\Delta_\lambda=\bigcup_{i} C_i
$$
be the normalization. 
Thus,
\begin{align*}
H^0(\Delta_\lambda, \mathcal{O}_X(K_X+D)|_{\Delta_\lambda})
&\hookrightarrow
H^0(\widetilde{\Delta}_{\lambda}, \mathcal{O}_X(K_X+D)|_{\widetilde{\Delta}_{\lambda}})\\
&\cong \oplus_i
H^0(\widetilde{C}_{i}, \mathcal{O}_X(K_X+D)|_{\widetilde{C}_{i}}).
\end{align*}
Since each $C_i$ is an $M$-trivial lc place, we have
 $$
 \deg \nu^*(K_X+D)|_{\widetilde{C}_{i}}=(K_X+D)\cdot C_i=0.
 $$
This implies that either $\mathcal{O}_X(K_X+D)|_{\widetilde{C}_{i}}\cong\mathcal{O}_{\widetilde{C}_{i}}$ or $H^0(\widetilde{C}_{i}, \mathcal{O}_X(K_X+D)|_{\widetilde{C}_{i}})=0$.
If $H^0(X, \mathcal{O}_X(K_X+D)|_{\Delta_\lambda})$ has a non-zero global section $s$, then $s|_{\widetilde{C}_{i}}\neq 0$ for some $i$.
Thus, $s|_{\widetilde{C}_{i}}$ is a non-zero constant.
If $C_j$ intersects $C_i$, then the two restrictions
$s|_{\widetilde{C}_i}$ and $s|_{\widetilde{C}_j}$ agree at a point lying
over $C_i\cap C_j$. 
Since $s|_{\widetilde{C}_i}\neq0$, it follows that $s|_{\widetilde{C}_j}\neq 0$.
Since $\Delta_\lambda$ is connected, the section $\nu^*s$ nowhere vanishes. Since $\nu$ is surjective, so does $s$.
Therefore, we have $\mathcal{O}_X(K_X+D)|_{\Delta_\lambda}\cong\mathcal{O}_{\Delta_\lambda}$.

\step
We show that $H^0(\Delta_\lambda, \omega_{\Delta_\lambda})\neq 0$ implies $\mathcal{O}_X(K_X+D)|_{\Delta_\lambda}\cong\mathcal{O}_{\Delta_\lambda}$.
By Lemma \ref{lem: adjunction}, the natural map
$$
H^0(\Delta_\lambda, \omega_X(\Delta_\lambda)|_{\Delta_\lambda})
\twoheadrightarrow
H^0(\Delta_\lambda, \omega_{\Delta_\lambda})
$$
is surjective. 
Hence, since $H^0(\Delta_\lambda, \omega_{\Delta_\lambda})\neq 0$, we can choose a non-zero section $s\in H^0(\Delta_\lambda, \omega_X(\Delta_\lambda)|_{\Delta_\lambda})$. 
Let $t\in H^0(X, \mathcal{O}_X(m(\Delta-\Delta_\lambda)))$ be the canonical section corresponding to the effective divisor $m(\Delta-\Delta_\lambda)$.
Restricting $t$ to $\Delta_\lambda$ and taking its tensor product with $s^{\otimes m}$, we obtain a section 
\begin{align*}
(t|_{\Delta_\lambda}\otimes s^{\otimes m})^{**} &\in 
H^0\left(\Delta_\lambda, \left(\mathcal{O}_X(m(\Delta-\Delta_\lambda))\otimes\omega_X(\Delta_\lambda)^{\otimes m}\right)^{**}|_{\Delta_\lambda}\right) \\
&= H^0(\Delta_\lambda, \mathcal{O}_X(m(K_X+\Delta))|_{\Delta_\lambda}).
\end{align*}
This section is non-zero, since
 $t$ is non-zero on each irreducible component of $\Delta_{\lambda}$ and $s^{\otimes m}$ is non-zero.
Therefore, by Step \ref{step: non-zero is enough}, the proof is complete.

\step
Finally, we show that $H^0(\Delta_\lambda, \omega_{\Delta_\lambda}) = 0$ also implies $\mathcal{O}_X(K_X+D)|_{\Delta_\lambda}\cong\mathcal{O}_{\Delta_\lambda}$.
By Serre duality, we have $H^1(\Delta_\lambda, \mathcal{O}_{\Delta_\lambda}) = 0$.
By \cite[Section 8.4, Theorem 1]{BLR90}, the Picard functor of $\Delta_\lambda$ is representable by a group scheme, and its Lie algebra is isomorphic to $H^1(\Delta_\lambda, \mathcal{O}_{\Delta_\lambda})$.
Since $H^1(\Delta_\lambda, \mathcal{O}_{\Delta_\lambda}) = 0$, we have $\operatorname{Pic}^0(\Delta_\lambda) = 0$.
Since $\mathcal{O}_X(K_X+D)|_{\Delta_\lambda}$ lies in $\operatorname{Pic}^0(\Delta_\lambda) = 0$, we conclude that 
$$
\mathcal{O}_X(K_X+D)|_{\Delta_\lambda} \cong \mathcal{O}_{\Delta_\lambda}.
$$
\end{proof}

We recall that the base locus of the relative linear system $|L/S|$ for a Cartier divisor $L$ on $X$ is defined as 
$$
\operatorname{Bs}|L/S|:=
\operatorname{Supp}\operatorname{Coker}(\pi^*\pi_*\mathcal{O}_X(L)\rightarrow\mathcal{O}_X(L)))\subset X.
$$

\begin{lem}\label{lem: component of coeff 1}
Let $C$ be an irreducible component of $\lfloor \Delta \rfloor$.
Then for $m\gg 0$, the following hold:
\begin{itemize}
    \item[$(1)$] If $C$ is an $M$-trivial lc place, then $\operatorname{Bs}|m(K_X+\Delta)/S|\cap C=\emptyset$.
    \item[$(2)$] $C\nsubseteq\operatorname{Bs}|m(K_X+\Delta)/S|$.
\end{itemize}
\end{lem}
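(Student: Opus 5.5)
The plan is to combine Corollary~\ref{cor: vanishing} with Lemma~\ref{lem: restriction is trivial} and the usual lifting argument along the restriction sequence to $C$ or $\Delta_\lambda$. Fix $m_0$ with $m_0(K_X+\Delta)$ Cartier, and work with multiples $m=km_0$. Put $M:=(m-1)(K_X+\Delta)$ and $D:=m(K_X+\Delta)-K_X=M+\Delta$ as above. Since $M$ is a positive multiple of $K_X+\Delta$, a curve is an $M$-trivial lc place if and only if it is a $(K_X+\Delta)$-trivial lc place. Hence $\Delta_M$ and its decomposition $\Delta_M=\sum_\lambda\Delta_\lambda$ do not depend on $m$.

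For (1), let $C\subseteq\Delta_\lambda$. The divisor $D':=D-\Delta_\lambda=M+(\Delta-\Delta_\lambda)$ is again of the form (nef and big)$+$(boundary), and it is a $\mathbb{Z}$-divisor because $\Delta_\lambda$ is reduced. I claim $\Delta-\Delta_\lambda$ has no connected component consisting entirely of $M$-trivial lc places. The connected components of $\Delta-\Delta_\lambda$ are those of $\Delta$ other than $\Delta_\lambda$. Those made only of $M$-trivial lc places are exactly the other $\Delta_\mu$. These are not excluded automatically, so I would replace $\Delta_\lambda$ by the whole $\Delta_M$ and set $D':=M+(\Delta-\Delta_M)$. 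Then no component of $\Delta-\Delta_M$ is of the bad type, and Corollary~\ref{cor: vanishing} gives $R^1\pi_*\mathcal{O}_X(K_X+D-\Delta_M)=0$. The sequence
$$0\to\mathcal{O}_X(K_X+D-\Delta_M)\to\mathcal{O}_X(K_X+D)\to\mathcal{O}_X(K_X+D)|_{\Delta_M}\to0$$
is exact, because $K_X+D$ is Cartier and tensoring the ideal sequence of $\Delta_M$ is fine. So $\pi_*\mathcal{O}_X(m(K_X+\Delta))\to\pi_*(\mathcal{O}_X(m(K_X+\Delta))|_{\Delta_M})$ is surjective. By Lemma~\ref{lem: restriction is trivial} the right-hand side is $\bigoplus_\lambda\pi_*\mathcal{O}_{\Delta_\lambda}$, and each $\Delta_\lambda$ is $\pi$-exceptional and maps to a point. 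The section restricting to $1$ on $\Delta_\lambda$ therefore lifts to a section near $\pi(\Delta_\lambda)$ that is nowhere zero on $\Delta_\lambda$, which gives $\operatorname{Bs}|m(K_X+\Delta)/S|\cap C=\emptyset$.

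The main obstacle is the extra hypothesis of Corollary~\ref{cor: vanishing} when $\dim S=0$ and $\operatorname{char}k>0$, namely $\dim|D'|\ge\dim_k\ker(F^e)$. The right-hand side is a constant independent of $m$. The left-hand side grows like $m^2(K_X+\Delta)^2/2$ because $K_X+\Delta$ is big: $D'$ differs from $m(K_X+\Delta)-K_X$ by subtracting a fixed effective divisor, so Riemann--Roch on the minimal resolution gives quadratic growth. This is why we take $m\gg0$. One should check that $D'$ is effective up to linear equivalence with many sections, using $\kappa=2$.

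For (2), let $C$ be a component of $\lfloor\Delta\rfloor$ that is not an $M$-trivial lc place. I would run the same argument with $D'':=D-C-\Delta_M$, which is $M+(\Delta-C-\Delta_M)$, still a boundary plus $M$. Removing $C$ cannot create a bad connected component: a component of $\Delta-C-\Delta_M$ made entirely of $M$-trivial lc places would already be a connected component of $\Delta$ containing no $C$, unless it touched $C$. Here Corollary~\ref{cor: vanishing} only needs to be applied after checking the $\mathbb{Z}$-Zariski description from the preceding lemma; if such pieces arise, I would subtract them as well, since their restrictions are already handled. The vanishing gives surjectivity of
$$\pi_*\mathcal{O}_X(m(K_X+\Delta))\to\pi_*\big(\mathcal{O}_X(m(K_X+\Delta))|_{C\cup\Delta_M}\big).$$
It then suffices to produce a section on $C$ that is not identically zero and is compatible with the trivializations on $\Delta_M$. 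There are two cases. If $C$ is not $\pi$-exceptional, $\pi_*$ of the restriction to $C$ is nonzero locally on $S$. If $C$ is $\pi$-exceptional with $M\cdot C>0$, then $\deg m(K_X+\Delta)|_C\to\infty$, which gives sections vanishing at the finitely many points $C\cap\Delta_M$, and these glue with $0$ on $\Delta_M$. In either case $C\not\subseteq\operatorname{Bs}$.
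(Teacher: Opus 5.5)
Your proof is correct. Part (1) is exactly the paper's argument: subtract all of $\Delta_M$, apply Corollary~\ref{cor: vanishing} to $M+(\Delta-\Delta_M)$, and lift the trivializing section of $\mathcal{O}_X(K_X+D)|_{\Delta_M}\cong\mathcal{O}_{\Delta_M}$. The extra hypothesis for $\dim S=0$, $\operatorname{char}k>0$ is handled by the quadratic growth of $\dim|D'|$, as in the remark following Lemma~\ref{lem: 0-dim non-klt}.

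In part (2) you take a slightly different route.

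The paper splits into two cases. If $C$ meets $\Delta_M$, part (1) already puts the points of $C\cap\Delta_M$ outside the base locus. If $C\cap\Delta_M=\emptyset$, then $K_X+D-\Delta_M$ is Cartier near $C$, and vanishing for $D-\Delta_M-C$ gives surjectivity onto $\pi_*(\mathcal{O}_X(K_X+D)|_C)$. Relative global generation of this restriction (ample if $\pi(C)$ is a point, $\pi|_C$ finite otherwise) then gives the stronger conclusion $C\cap\operatorname{Bs}|m(K_X+\Delta)/S|=\emptyset$.

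You instead restrict to the reduced curve $C\cup\Delta_M$ in one step. This avoids the case split and the local Cartier check, but it costs you a gluing condition. A section $s_C$ of $\mathcal{O}_X(K_X+D)|_C$ glues with $0$ on $\Delta_M$ only if it vanishes on the scheme-theoretic intersection cut out by $\mathcal{I}_C+\mathcal{I}_{\Delta_M}$. That intersection can be non-reduced, so ``vanishing at the finitely many points'' must be read in this sense. Since the subscheme has length independent of $m$ and $(K_X+\Delta)\cdot C>0$, nonzero such $s_C$ exist for $m\gg0$. In the non-exceptional case, which your write-up does not treat, you should impose the same vanishing, or simply shrink $S$ away from the finite set $\pi(\Delta_M)$.

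Your worry about new ``bad'' connected components is unnecessary. $\Delta_M$ is by definition the sum of \emph{all} $M$-trivial lc places, so $\Delta-\Delta_M$ and $\Delta-C-\Delta_M$ contain none. Relatedly, $\Delta_\lambda$ is a connected component of $\Delta_M$, not of $\Delta$. Your description of the connected components of $\Delta-\Delta_\lambda$ is therefore inaccurate, though harmless since you abandon that route.
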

\begin{proof}
We have
$$
D-\Delta_M=\Delta-\Delta_M+(m-1)(K_X+\Delta),
$$
where $\Delta-\Delta_M$ has no $M$-trivial lc places.
By Corollary \ref{cor: vanishing}, it follows that
$$
R^1\pi_*\mathcal{O}_X(K_X+D-\Delta_M)=0.
$$
Considering the short exact sequence
$$
0\to\mathcal{O}_X(K_X+D-\Delta_M)\to\mathcal{O}_X(K_X+D)\to\mathcal{O}_X(K_X+D)|_{\Delta_M}\to 0,
$$
we obtain a surjection
$$
\pi_*\mathcal{O}_X(K_X+D)\twoheadrightarrow \pi_*(\mathcal{O}_X(K_X+D)|_{\Delta_M}).
$$
We consider the commutative diagram
$$
\xymatrix{
\pi^*\pi_*\mathcal{O}_X(K_X+D) \ar[r] \ar[d]&
\mathcal{O}_X(K_X+D) \ar[d]
\\
\pi^*\pi_*(\mathcal{O}_X(K_X+D)|_{\Delta_M})\ar[r]&
\mathcal{O}_X(K_X+D)|_{\Delta_M}
}
$$
We note that $\pi^*\pi_*\mathcal{O}_X(K_X+D) \rightarrow \pi^*\pi_*(\mathcal{O}_X(K_X+D)|_{\Delta_M})$ is surjective because $\pi^*$ is a right exact functor.
By Lemma \ref{lem: restriction is trivial},
$\mathcal{O}_X(K_X+D)|_{\Delta_M}\cong\mathcal{O}_{\Delta_M}$.
In particular,
$\pi^*\pi_*(\mathcal{O}_X(K_X+D)|_{\Delta_M})\to 
\mathcal{O}_X(K_X+D)|_{\Delta_M}$ is surjective.
Then $\pi^*\pi_*\mathcal{O}_X(K_X+D) \to
\mathcal{O}_X(K_X+D)$ is surjective along $\Delta_{M}$.
Therefore, $\operatorname{Bs}|m(K_X+\Delta)/S|\cap \Delta_M=\emptyset$.
This proves the first statement (1).

We next prove the second statement (2).
Take an irreducible component $C$ of $\lfloor \Delta \rfloor$ that is not an $M$-trivial lc place.
If $C$ intersects with $\Delta_{M}$, then the assertion follows from (1).
Thus, we may assume that $C\cap\Delta_M=\emptyset$.
Since $\Delta-\Delta_M-C$ has no $M$-trivial lc places, by Corollary~\ref{cor: vanishing}, it follows that
$$
R^1\pi_*\mathcal{O}_X(K_X+D-\Delta_M-C)=0.
$$
Note that $K_X+D-\Delta_M$ is Cartier around $C$ since $C$ is disjoint from $\Delta_M$.
Considering the short exact sequence
$$
0\to\mathcal{O}_X(K_X+D-\Delta_M-C)\to\mathcal{O}_X(K_X+D-\Delta_M)\to\mathcal{O}_X(K_X+D-\Delta_M)|_C\to 0,
$$
we obtain a surjection
$$
\pi_*\mathcal{O}_X(K_X+D-\Delta_M)\twoheadrightarrow \pi_*(\mathcal{O}_X(K_X+D-\Delta_M)|_C).
$$
We consider the commutative diagram
$$
\xymatrix{
\pi_*\mathcal{O}_X(K_X+D-\Delta_M) \ar[r] \ar[d]&
\pi_*(\mathcal{O}_X(K_X+D-\Delta_M)|_C) \ar[d]^{\cong}
\\
\pi_*\mathcal{O}_X(K_X+D)\ar[r]&
\pi_*(\mathcal{O}_X(K_X+D)|_C)
}
$$
which yields the surjection
$\pi_*\mathcal{O}_X(K_X+D)\to
\pi_*(\mathcal{O}_X(K_X+D)|_C)$.
When $\pi(C)$ is a point, then $(K_X+\Delta)\cdot C>0$ since $C$ is not an $M$-trivial lc place.
Thus, $\mathcal{O}_X(K_X+D)|_C$ is ample and we may assume that $\pi^*\pi_*(\mathcal{O}_{X}(m(K_X+\Delta))|_C)\to\mathcal{O}_{X}(m(K_X+\Delta))|_C$ is surjective after replacing $m$ if necessary.
When $\pi(C)$ is a curve, $\pi|_{C}$ is a finite morphism. Then $\pi^*\pi_*(\mathcal{O}_{X}(m(K_X+\Delta))|_C)\to\mathcal{O}_{X}(m(K_X+\Delta))|_C$  is surjective.
In both cases, 
$$
\pi^*\pi_*\mathcal{O}_X(K_X+D)\to
\pi^*\pi_*(\mathcal{O}_X(K_X+D)|_C)\to\mathcal{O}_{X}(m(K_X+\Delta))|_C
$$
is surjective.
In particular,
$C\nsubseteq\operatorname{Bs}|m(K_X+\Delta)/S|$
(or more strongly, $C\cap \operatorname{Bs}|m(K_X+\Delta)/S|=\emptyset$ if $C$ is disjoint from $\Delta_{M}$).
\end{proof}

\begin{lem}\label{lem: 0-dim non-klt}
Let $x\in X$ be a zero-dimensional non-klt center of $(X, \Delta)$.
Then  $x\notin\operatorname{Bs}|m(K_X+\Delta)/S|$ for $m\gg 0$.
\end{lem}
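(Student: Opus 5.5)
We argue as in Lemma~\ref{lem: component of coeff 1}, replacing the curve $C$ by the point $x$ and applying the vanishing theorem on a suitable modification. If $x$ lies on a component of $\lfloor\Delta\rfloor$ that is an $M$-trivial lc place, Lemma~\ref{lem: component of coeff 1}~(1) already gives $x\notin\operatorname{Bs}|m(K_X+\Delta)/S|$. So we may assume $x$ lies on no component of $\Delta_M$; in particular $x\notin\Delta_M$, since $\Delta_M$ is a union of curves.

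The plan is to produce a surjection of the form
$$
\pi_*\mathcal{O}_X(m(K_X+\Delta))\twoheadrightarrow \mathcal{O}_X(m(K_X+\Delta))\otimes k(x)
$$
by killing an $R^1$. Let $f\colon Z\to X$ be the minimal resolution of a neighbourhood of $x$, or the global minimal resolution. Write
$$
K_Z+\Delta_Z=f^*(K_X+\Delta).
$$
Since $x$ is a non-klt center, some $f$-exceptional divisor over $x$ has coefficient at least $1$ in $\Delta_Z$; if $x$ is a smooth point, take one blow-up. Let $\Xi$ be the reduced $f$-exceptional divisor over $x$ formed by the components of coefficient at least $1$, or better the part of $\lfloor\Delta_Z\rfloor$ over $x$. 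Set
$$
\Delta_Z':=\Delta_Z-\lfloor\Delta_Z^{\ge 1}\rfloor_{\text{exc}}+\Xi,
$$
a boundary near $f^{-1}(x)$, and consider the $\mathbb{Z}$-divisor
$$
D_Z:=mf^*(K_X+\Delta)-K_Z=f^*M+\Delta_Z.
$$
Write $\Delta_Z=\Delta_Z^{<1}+F$, where $F\ge 0$ is an integral divisor with $F\ge\Xi$. Then we apply Corollary~\ref{cor: vanishing} on $Z$ over $S$ to
$$
D_Z-F=f^*M+\Delta_Z^{<1}+\{\text{fractional part}\},
$$
with the boundary $\Delta_Z^{<1}$, which has no lc places at all. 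Here $f^*M$ is $\pi\circ f$-nef and big. This gives
$$
R^1(\pi f)_*\mathcal{O}_Z(K_Z+D_Z-F)=0.
$$
In fact we want to apply it to $D_Z-F+\Xi$ with boundary $\Delta_Z^{<1}+\Xi$. The components of $\Xi$ are $f^*M$-trivial lc places, but $\Xi$ is a connected component of the boundary only if it is disjoint from the rest, which may fail. The cleaner route is to vanish with $F$ removed entirely, and then use the exact sequence
$$
0\to\mathcal{O}_Z(K_Z+D_Z-F)\to \mathcal{O}_Z(K_Z+D_Z-F+\Xi')\to(\ldots)|_{\Xi'}\to0
$$
for a minimal connected piece $\Xi'$ over $x$. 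This surjects global sections onto $H^0(\Xi',\mathcal{O}_Z(f^*m(K_X+\Delta)-F+\Xi')|_{\Xi'})$.

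Next we identify the pushforward. Since $F-\Xi'$ is effective, $f$-exceptional near $x$, and $m(K_X+\Delta)$ is Cartier, we have
$$
f_*\mathcal{O}_Z(mf^*(K_X+\Delta)-(F-\Xi'))\subseteq\mathcal{O}_X(m(K_X+\Delta)),
$$
with equality away from $x$. A section of the left side that is nonvanishing somewhere on $\Xi'$ gives, after composing with the inclusion, a section of $m(K_X+\Delta)$ not vanishing at $x$. So it suffices that the line bundle $\mathcal{O}_Z(f^*m(K_X+\Delta))|_{\Xi'}\cong\mathcal{O}_{\Xi'}$ (pullback of a Cartier divisor to a fiber) twisted by $\mathcal{O}(\Xi'-F)|_{\Xi'}$ has a nonzero section not vanishing identically. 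To arrange this, choose $\Xi'$ as in the chain-connected framework of Konno--Enokizono: take $F$ replaced by a maximal chain-connected $\Xi'$ with $(F-\Xi')\cdot C\le 0$ on components. This is where I expect the main obstacle. One must pick the integral divisor to subtract so that simultaneously (a) the vanishing hypothesis of Corollary~\ref{cor: vanishing} holds, i.e.\ no connected component of the remaining boundary consists of $f^*M$-trivial lc places, and (b) the restricted bundle on the curve over $x$ has sections. I would handle both by mimicking Lemma~\ref{lem: restriction is trivial}: use Lemma~\ref{lem: adjunction} on $\Xi'$, with Steps 1--3 there ($\omega_{\Xi'}$ having sections, or else $\operatorname{Pic}^0(\Xi')=0$), to show the restriction is trivial, hence has the constant section. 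Finally, in positive characteristic with $\dim S=0$, the extra hypothesis of Corollary~\ref{cor: vanishing} holds for $m\gg0$ because $K_X+\Delta$ is big, so $\dim|D|$ grows linearly in $m$.
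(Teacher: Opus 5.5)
\textbf{There is a genuine gap.} It sits at the step you flag, but the obstruction is not the one you describe. Your vanishing $R^1(\pi f)_*\mathcal{O}_Z(f^*m(K_X+\Delta)-F)=0$ is fine. What fails is the claim that lifting a section which is nonzero somewhere on $\Xi'$ gives a section of $m(K_X+\Delta)$ not vanishing at $x$: this is false as soon as $F-\Xi'$ has a component lying over $x$. The lift is a section $t$ of $\mathcal{O}_X(m(K_X+\Delta))$ such that $f^*t$ vanishes along $F-\Xi'$. But in a local trivialization, $f^*t$ restricted to the connected fibre $f^{-1}(x)$ is the constant $t(x)$, so $t(x)=0$. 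Taking $\Xi'$ to be a minimal connected or chain-connected piece with $(F-\Xi')\cdot C\le 0$ therefore works against you, and the detour through Lemma~\ref{lem: adjunction} and Lemma~\ref{lem: restriction is trivial} does not help. What works is to take the whole integral part $F_x$ of $\Delta_Z$ over $x$, with multiplicities. From
\[
0\to\mathcal{O}_Z(f^*m(K_X+\Delta)-F)\to\mathcal{O}_Z(f^*m(K_X+\Delta))\to\mathcal{O}_F(f^*m(K_X+\Delta))\to 0
\]
you can lift the section that is $1$ on $F_x$ and $0$ on the rest of $F$. This uses only that $f^*m(K_X+\Delta)$ is trivial near $f^{-1}(x)$, which holds because $m(K_X+\Delta)$ is Cartier; no adjunction is needed. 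This is the paper's argument. There it is phrased as $R^1\pi_*(\mathcal{O}_X(m(K_X+\Delta))\otimes\rho_*\mathcal{O}_Y(-Z))=0$ via Leray, together with the fact that $x$ is an isolated point of the cosupport of $\rho_*\mathcal{O}_Y(-Z)$.

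The fix above needs $F_x$ to be disjoint from the rest of $F$, and with your $F=\lfloor\Delta_Z\rfloor$ this fails when $x$ lies on a component $C$ of $\lfloor\Delta\rfloor$ not contained in $\Delta_M$. Such points do occur: a node of $C$, or a point where two such components meet, is a zero-dimensional lc center. Lemma~\ref{lem: component of coeff 1}~(2) only gives $C\not\subseteq\operatorname{Bs}|m(K_X+\Delta)/S|$, which says nothing about $x$. In that case $F-F_x$ contains $f^{-1}_*C$, which meets $f^{-1}(x)$, so every section you produce vanishes at $x$. The paper avoids this by subtracting only $Z=\lfloor\Delta_Y\rfloor-\rho_*^{-1}(\lfloor\Delta\rfloor-\Delta_M)$. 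The strict transforms of the components of $\lfloor\Delta\rfloor-\Delta_M$ stay in the boundary with coefficient $1$, but they are not $\rho^*M$-trivial lc places, so Corollary~\ref{cor: vanishing} still applies. Insisting on a boundary with ``no lc places at all'' throws away exactly this freedom.

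A smaller point: the minimal resolution, plus one blow-up at smooth points, need not exhibit a divisor over $x$ with coefficient $\ge 1$. For example, take two smooth curves with contact order $\ge 3$ at a smooth point, each with coefficient $2/3$: the first blow-up gives coefficient $1/3$, and coefficient $1$ appears only after the third. You should take a resolution on which some divisor with discrepancy $\le -1$ and center $x$ actually appears.
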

\begin{proof}
Take a resolution $\rho\colon Y\to X$ at $x$.
Write $K_Y+\Delta_Y=\rho^*(K_X+\Delta)$.
By Lemma \ref{lem: component of coeff 1}, we may assume that $x\notin \operatorname{Supp}\Delta_M$.
Put
$$
Z:=\lfloor\Delta_Y\rfloor-\rho_*^{-1}(\lfloor \Delta \rfloor-\Delta_M).
$$
The boundary $\Q$-divisor
$$
\Delta_Y-Z=\{\Delta_Y\}+\rho_*^{-1}(\lfloor \Delta \rfloor-\Delta_M)
$$
has no $\rho^*M$-trivial lc places.
Using Corollary~\ref{cor: vanishing} on
$$
\rho^*(m(K_X+\Delta))-Z-K_{Y}=\Delta_Y-Z+\rho^*M,
$$
we have
\begin{align}
R^1(\pi\circ\rho)_*\mathcal{O}_{Y}(\rho^*(m(K_X+\Delta))-Z)=0.
\end{align}
By the Leray spectral sequence,
\begin{align*}
R^1\pi_*\mathcal{O}_X(m(K_X+\Delta))\otimes\rho_*\mathcal{O}_Y(-Z))=R^1\pi_*(\rho_*(\mathcal{O}_Y(\rho^*(m(K_X+\Delta))-Z))=0.
\end{align*}
Hence, 
$$
\pi^*\pi_*\mathcal{O}_X(m(K_X+\Delta)) \to
\pi^*\pi_*(\mathcal{O}_X(m(K_X+\Delta))\otimes(\mathcal{O}_X/\rho_*\mathcal{O}_Y(-Z)))
$$
 is surjective.
Note that $x$ is an isolated point on $\operatorname{Supp}(\mathcal{O}_X/\rho_*\mathcal{O}_Y(-Z))$.
In particular, 
$$
\pi^*\pi_*(\mathcal{O}_X(m(K_X+\Delta))\otimes(\mathcal{O}_X/\rho_*\mathcal{O}_Y(-Z)))\to
\mathcal{O}_X(m(K_X+\Delta))\otimes(\mathcal{O}_X/\rho_*\mathcal{O}_Y(-Z))
$$
is surjective at $x$.
Thus, considering the commutative diagram
$$
\xymatrix{
\pi^*\pi_*\mathcal{O}_X(m(K_X+\Delta)) \ar[r] \ar[d]&
\pi^*\pi_*(\mathcal{O}_X(m(K_X+\Delta))\otimes(\mathcal{O}_X/\rho_*\mathcal{O}_Y(-Z))) \ar[d]
\\
\mathcal{O}_X(m(K_X+\Delta))\ar[r]&
\mathcal{O}_X(m(K_X+\Delta))\otimes(\mathcal{O}_X/\rho_*\mathcal{O}_Y(-Z)),
}
$$
we conclude that $x\notin \operatorname{Bs}|m(K_X+\Delta)/S|$.
\end{proof}

\begin{rem}
\begin{itemize}
\item[(1)]
Suppose that $\dim S=0$ and $\operatorname{char}k>0$.
In applications of Corollary~\ref{cor: vanishing} in the proofs of Lemmas~\ref{lem: component of coeff 1} and \ref{lem: 0-dim non-klt}, one needs to verify
an inequality of the form
\[
\dim |L_m|
\geq
\dim_k\ker\left(
F^e\colon H^1(V,\mathcal O_V)\to H^1(V,\mathcal O_V)
\right)
\qquad\text{for }e\gg0,
\]
where $V$ is a fixed surface and $L_m$ is a divisor depending on $m$.
The right-hand side is bounded independently of $m$, since
$H^1(V,\mathcal O_V)$ is finite-dimensional. 
On the other hand, in our
situations, $L_m$ is obtained from
the birational pullback of $m(K_X+\Delta)$ by subtracting a divisor independent of $m$. 
Since
$K_X+\Delta$ is big, we have
\[
\dim |L_m| \sim O(m^2)\longrightarrow\infty
\qquad\text{as }m\to\infty.
\]
Thus, after taking $m\gg0$, the required inequality is automatically
satisfied.

\item[(2)]
When $\operatorname{char}k=0$ or $\dim S>0$,
the assumption $m\gg 0$ in Lemma~\ref{lem: component of coeff 1}~(1) and Lemma~\ref{lem: 0-dim non-klt} can be dropped.
Moreover, in Lemma~\ref{lem: component of coeff 1}~(2), the same conclusion holds under the weaker assumption $\pi_*(\mathcal{O}_X(K_X+D)|_C)\neq 0$.
\end{itemize}
\end{rem}

By Lemmas~\ref{lem: component of coeff 1} and \ref{lem: 0-dim non-klt}, we have shown that no non-klt center of $(X,\Delta)$ is contained in the base locus of $|m(K_X+\Delta)/S|$ for sufficiently large and divisible $m$.
We are now ready to prove Theorem~\ref{thm: abundance big case}:

\begin{proof}[Proof of Theorem~\ref{thm: abundance big case}]
In the relative case, by shrinking $S$ to an open neighborhood of each point if necessary, we may assume that $S$ is affine. 

Fix an integer $m \gg 0$, and suppose to the contrary that $\operatorname{Bs}|m(K_X+\Delta)/S| \neq \emptyset$. 
Take general members $\Xi_1, \Xi_2, \Xi_3 \in |m(K_X+\Delta)/S|$, and set 
\[
\Theta := \Xi_1 + \Xi_2 + \Xi_3.
\]
By Lemmas~\ref{lem: component of coeff 1} and \ref{lem: 0-dim non-klt}, we may assume that $\Theta$ does not contain any non-klt center of $(X, \Delta)$. 
On the other hand, $(X, \Delta + \Theta)$ is not lc at the generic points of $\operatorname{Bs}|m(K_X+\Delta)/S|$. 
Set
\[
c := \max \left\{ t \in \mathbb{R} \;\middle|\; (X, \Delta + t\Theta) \text{ is lc
outside the non-lc locus of } (X, \Delta) \right\}.
\]
Then $c \in \mathbb{Q}$ and $0 < c < 1$. Moreover, $\Delta + c\Theta$ is a boundary $\mathbb{Q}$-divisor satisfying
\[
K_X + \Delta + c\Theta \sim_{\mathbb{Q}, \pi} (1 + 3cm)(K_X + \Delta).
\]
Furthermore, since $\Xi_i$'s are general, there exists an lc center $C$ of $(X, \Delta + c\Theta)$ such that $C \subseteq \operatorname{Bs}|m(K_X+\Delta)/S|$.
Choose positive integers $l, n \in \mathbb{Z}_{>0}$ such that 
\[
l(K_X + \Delta + c\Theta) \sim_{\pi} nm(K_X+\Delta).
\]
By Lemmas \ref{lem: component of coeff 1} and \ref{lem: 0-dim non-klt}, we have
\[
C \nsubseteq \operatorname{Bs}|kl(K_X + \Delta + c\Theta)/S| \quad \text{for } k \gg 0.
\]
Therefore, we obtain the strict inclusion
\[
\operatorname{Bs}|knm(K_X+\Delta)/S| \subsetneq \operatorname{Bs}|m(K_X+\Delta)/S|
\]
since $C \nsubseteq \operatorname{Bs}|knm(K_X+\Delta)/S|$ while $C \subseteq \operatorname{Bs}|m(K_X+\Delta)/S|$.
By Noetherian induction, this strictly decreasing sequence of base loci implies that $\operatorname{Bs}|m(K_X+\Delta)/S| = \emptyset$ for all $m \gg 0$. Hence, $K_X+\Delta$ is $\pi$-semiample.
\end{proof}

As an immediate corollary of this theorem, we obtain the finite generation of the log canonical ring:

\begin{cor}
Let $\pi\colon X\to S$ be a proper surjective morphism from a normal surface $X$ to a variety $S$ over $k$.
Let $\Delta$ be a boundary $\mathbb{Q}$-divisor on $X$ such that $K_X+\Delta$ is $\mathbb{Q}$-Cartier and $\pi$-nef.
Then, the log canonical ring
\[
\bigoplus_{m \ge 0} \pi_* \mathcal{O}_X(\lfloor m(K_X + \Delta)\rfloor)
\]
is a locally finitely generated $\mathcal{O}_S$-algebra.
\end{cor}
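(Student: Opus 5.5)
Since $\pi$-semiampleness of $K_X+\Delta$ yields a proper morphism $f\colon X\to Z$ over $S$ and an $\pi_Z$-ample $\Q$-Cartier divisor $A$ on $Z$ with $K_X+\Delta\sim_{\Q}f^*A$, I reduce finite generation to the ring of an ample divisor. First I shrink $S$ to an affine open, since local finite generation is local on $S$. Then Theorem~\ref{thm: abundance} (equivalently, Theorem~\ref{thm: abundance big case} in the big case and Theorem~\ref{lem: abandance non-big} together with Theorems~\ref{thm: non-vanishing}, \ref{thm: log CY} in the non-big case) shows that $K_X+\Delta$ is $\pi$-semiample. I fix $r>0$ with $r(K_X+\Delta)$ Cartier and $\pi$-base point free. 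Let $f\colon X\to Z$ be the Stein factorization of the morphism over $S$ defined by $|r(K_X+\Delta)/S|$. Then $r(K_X+\Delta)\sim f^*A$ with $A$ a Cartier divisor on $Z$ that is ample over $S$, and $f_*\mathcal{O}_X=\mathcal{O}_Z$.

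Next I reduce to the Veronese subring. Each summand $\pi_*\mathcal{O}_X(\lfloor m(K_X+\Delta)\rfloor)$ is a coherent $\mathcal{O}_S$-module. Writing $m=qr+j$ with $0\le j<r$, I get $\lfloor m(K_X+\Delta)\rfloor = qr(K_X+\Delta)+\lfloor j(K_X+\Delta)\rfloor$, because $qr(K_X+\Delta)$ is integral. Hence the full ring $R$ is a module over the Veronese subring $R^{(r)}=\bigoplus_q \pi_*\mathcal{O}_X(qr(K_X+\Delta))$, and it is the direct sum of the $r$ modules
$$
M_j=\bigoplus_q \pi_*\mathcal{O}_X\bigl(qr(K_X+\Delta)+\lfloor j(K_X+\Delta)\rfloor\bigr).
$$
By the projection formula, $R^{(r)}\cong\bigoplus_q \pi_{Z*}\mathcal{O}_Z(qA)$, and this is finitely generated over $\mathcal{O}_S$ because $A$ is $\pi_Z$-ample. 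Similarly, $M_j\cong\bigoplus_q \pi_{Z*}\bigl(\mathcal{F}_j\otimes\mathcal{O}_Z(qA)\bigr)$ for the coherent sheaf $\mathcal{F}_j=f_*\mathcal{O}_X(\lfloor j(K_X+\Delta)\rfloor)$. Here I use that $qr(K_X+\Delta)$ is Cartier, so it can be pulled out of the reflexive sheaf. By Serre's theorem for the relatively ample divisor $A$, each $M_j$ is a finitely generated graded module over $R^{(r)}$.

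Finally, $R$ is an $R^{(r)}$-algebra that is finite as a module. So $R$ is finitely generated as an $\mathcal{O}_S$-algebra: generators of $R^{(r)}$ together with module generators of the $M_j$ suffice. The main point to watch is the identification of $M_j$ with twists of a coherent sheaf on $Z$, which relies on $qr(K_X+\Delta)$ being Cartier. Everything else is routine once the semiampleness from Theorem~\ref{thm: abundance} is available.
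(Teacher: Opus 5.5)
Your proof is correct and is essentially the paper's argument. The paper's proof is a single line citing Theorem~\ref{thm: abundance big case} for the big case and \cite[Theorem~4.1]{Fuj84} for the non-big case; it leaves implicit the passage from semiampleness to finite generation (Stein factorization, Veronese subring, Serre's theorem applied to the round-downs $\lfloor j(K_X+\Delta)\rfloor$) that you carry out. Citing Theorem~\ref{thm: abundance} for all cases is fine, but your parenthetical unpacking of it omits the absolute case $\kappa(K_X+\Delta)=1$, which is exactly where \cite[Theorem~4.1]{Fuj84} is needed.
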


\begin{proof}
This follows from Theorem~\ref{thm: abundance big case} and {\cite[Theorem~4.1]{Fuj84}}.
\end{proof}

\section{\texorpdfstring{Abundance for $\R$-divisors}{Abundance for R-divisors}} \label{sec: R-div}
In this section, we reduce the abundance theorem for $\R$-divisors to the case of $\Q$-divisors and complete the proof of the main theorem.

\begin{proof}[Proof of Theorem\ref{thm: abundance}]
  If $\Delta$ is a $\Q$-divisor, then the assertion follows from Theorems~\ref{thm: non-vanishing}, \ref{thm: log CY}, \ref{lem: abandance non-big} and \ref{thm: abundance big case}, together with {\cite[Theorem~4.1]{Fuj84}}.

  In general, by applying the Shokurov polytope argument, the case of $\mathbb{R}$-divisors can be reduced to that of $\mathbb{Q}$-divisors. 
For completeness, we briefly describe the relevant Shokurov polytope argument.
Let $V$ be the real vector space freely generated by the irreducible components of $\Delta$. 
Let $\mathcal{P}\subset V$ be any rational polytope consisting of $\R$-divisors $D$ in $V$ such that $D$ is boundary and $K_X+D$ is $\R$-Cartier.
Let $m>0$ be a positive number, and $\{C_{t}\}_{t\in T}$ be any set of $\pi$-exceptional curves on $X$ which satisfies $-(K_{X}+D)\cdot C_{t}\le m$ for each $D\in \mathcal{P}$.
Then, the subset
$$
\mathcal{N}:=\{D\in \mathcal{P}\ |\ \text{$(K_{X}+D)\cdot C_{t}\ge 0$ for any $t\in T$ } \}
$$
is a rational polytope.
Indeed, this can be proved by induction on $\dim\mathcal{P}$, using the same argument as in the proof of \cite[Proposition~3.2]{Bir11} (replacing a family of extremal rays by $\{C_{t}\}_{t\in T}$).
In our situation, we may take $\mathcal{P}$ to be the largest rational polytope consisting of boundary divisors $D\in V$ for which $K_X+D$ is $\mathbb{R}$-Cartier.
Let 
$$
m:=\max{\left\{3, -(K_{X}+D_{j})\cdot D_{j}, -K_{X}\cdot D_{j}\right\} },
$$
where $D_{j}$ runs through the irreducible components of $\Delta$,
and take $\{C_t\}_{t\in T}$ to be the set of all $\pi$-exceptional curves on $X$ 
that satisfies $-(K_{X}+D)\cdot C_{t}\le m$ for each $D\in \mathcal{P}$.

\begin{claim}
    For each $D\in \mathcal{P}$, the divisor $K_{X}+D$ is $\pi$-nef if and only if $(K_{X}+D)\cdot C_{t}\ge 0$ for any $t\in T$.
\end{claim}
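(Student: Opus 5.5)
The forward direction is immediate: if $K_X+D$ is $\pi$-nef, then $(K_X+D)\cdot C\ge 0$ for every $\pi$-exceptional curve $C$, in particular for each $C_t$. For the converse, assume $(K_X+D)\cdot C_t\ge 0$ for all $t\in T$, and suppose some $\pi$-exceptional curve $C$ has $(K_X+D)\cdot C<0$. We will show $C=C_t$ for some $t$, i.e.\ that $-(K_X+D')\cdot C\le m$ for every $D'\in\mathcal{P}$. This gives a contradiction.

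\textbf{Case 1: $C$ is not a component of $\Delta$.} For every $D'\in\mathcal{P}$ we have $D'\cdot C\ge 0$, since $D'$ is effective and supported on components of $\Delta$. Hence $-(K_X+D')\cdot C\le -K_X\cdot C$, and it remains to bound $-K_X\cdot C$. From $(K_X+D)\cdot C<0$ and $D\cdot C\ge 0$ we get $K_X\cdot C<0$. Now $C$ is a $\pi$-exceptional curve, so $C^2<0$ by Mumford's negative definiteness. Adjunction on the normal surface (via the minimal resolution, or the standard inequality $K_X\cdot C+C^2\ge -2$ for Mumford intersection numbers) then gives
\[
-K_X\cdot C\le 2+C^2<2\le m.
\]
I expect this step to be the main obstacle, since it needs the adjunction inequality for Weil divisors on non-$\Q$-factorial normal surfaces. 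I would prove it by pulling back to the minimal resolution $\rho\colon Y\to X$: write $K_Y=\rho^*K_X-E$ with $E\ge 0$, and use that for the strict transform $\widetilde C$ one has $\rho^*C=\widetilde C+F$ with $F\ge 0$ exceptional. Then
\[
K_X\cdot C=K_Y\cdot\widetilde C+E\cdot\widetilde C\ge K_Y\cdot\widetilde C\ge -2-\widetilde C^2.
\]
If $\widetilde C^2\ge -1$, this already gives $-K_X\cdot C\le 2+\widetilde C^2\le 1\le m$. If $\widetilde C^2\le -2$, then $K_Y\cdot\widetilde C\ge 0$ unless $\widetilde C$ is a $(-2)$-curve or worse, and we still need a crude uniform bound; in either subcase $K_Y\cdot\widetilde C\ge -1$ or $K_X\cdot C\ge K_Y\cdot\widetilde C\ge 0$ by minimality-type arguments. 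This is why $m\ge 3$ suffices.

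\textbf{Case 2: $C=D_j$ is a component of $\Delta$.} Write $D'=d'_jD_j+R'$, where $R'$ does not contain $D_j$ and $0\le d'_j\le 1$. Then
\[
-(K_X+D')\cdot D_j=-K_X\cdot D_j-d'_jD_j^2-R'\cdot D_j\le -K_X\cdot D_j-d'_jD_j^2.
\]
Since $D_j^2<0$, the right-hand side is an affine function of $d'_j\in[0,1]$. It is therefore bounded by its value at an endpoint, namely by $\max\{-K_X\cdot D_j,\,-(K_X+D_j)\cdot D_j\}\le m$. Hence $D_j=C_t$ for some $t$.

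In both cases $C$ belongs to $\{C_t\}$, contradicting the assumption that $(K_X+D)\cdot C_t\ge 0$ for all $t$. This proves the claim.
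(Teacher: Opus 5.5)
Your proof has a genuine gap in the absolute case $\dim S=0$. In this paper, $\pi$-exceptional only means that the curve is mapped to a point. So when $S$ is a point, every curve on $X$ is $\pi$-exceptional, there is no negative definiteness, $C^2$ may be positive, and $-K_X\cdot C$ is unbounded. Your strategy is to show that the given $(K_X+D)$-negative curve $C$ itself lies in $\{C_t\}_{t\in T}$, and in this case that is false, not just unproved. Take $X=\mathbb{P}^2$, $S$ a point and $\Delta=0$. Then $\mathcal{P}=\{0\}$ and $m=3$, and a smooth conic $Q$ satisfies $K_X\cdot Q=-6<0$ but $-K_X\cdot Q=6>m$, so $Q\notin\{C_t\}$. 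The claim holds there only because a \emph{different} curve, a line $\ell$ with $-K_X\cdot\ell=3$, lies in $\{C_t\}$ and is $K_X$-negative.

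The rest of your argument is sound. For $\dim S\ge 1$, Case 1 works once you replace $C^2<0$ by $C^2\le 0$, which is all Zariski's lemma gives when $\dim S=1$. The chain $-K_X\cdot C\le -K_Y\cdot\widetilde C\le 2+\widetilde C^2\le 2+C^2\le 2$, using $\widetilde C^2\le C^2$, already suffices, so your case analysis on $\widetilde C^2$ is unnecessary. This is the paper's argument for $\dim S\ge 1$. Your Case 2 is also correct, and it does not need $D_j^2<0$, since an affine function on $[0,1]$ is bounded by its endpoint values.

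The missing idea is to replace $C$ by a negative curve of bounded anticanonical degree. The paper passes to the minimal resolution $\rho\colon Y\to X$ and writes $K_Y+D_Y=\rho^*(K_X+D)$ with $D_Y\ge 0$. It then applies Bend-and-Break to the proper transform of $C$, obtaining a rational curve $Z\subset Y$ with $-K_Y\cdot Z\le 3$ and $-(K_Y+D_Y)\cdot Z>0$. Such a $Z$ cannot be $\rho$-exceptional, since $\rho^*(K_X+D)$ is trivial on those curves. Hence $C':=\rho_*Z$ is a curve with $(K_X+D)\cdot C'<0$. For every $D'\in\mathcal{P}$ there are two cases:
\begin{itemize}
\item if $C'=D_j$ for some $j$, your Case 2 estimate bounds $-(K_X+D')\cdot C'$;
\item if $C'$ is not a component of $\Delta$, then $Z\not\subset\operatorname{Supp}D'_Y$, so $-(K_X+D')\cdot C'=-(K_Y+D'_Y)\cdot Z\le -K_Y\cdot Z\le 3$.
\end{itemize}
This is where the constant $3$ in the definition of $m$ comes from. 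No adjunction bound on $C$ itself can produce it.
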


\begin{proof}
It suffices to prove the converse implication.
If $\dim S\ge 1$, then $\{C_{t}\}_{t\in T}$ consists of all $\pi$-exceptional curves.
Indeed, any $\pi$-exceptional curve $C$ satisfies $C^{2}\le 0$.
Thus, the standard computation (see {\cite[Lemma~3.9]{Tan14}}) shows that $-(K_{X}+D)\cdot C\le 2$ for every $D\in \mathcal{P}$.
Thus, every $\pi$-exceptional curve $C$ belongs to $\{C_t\}_{t\in T}$, and the claim follows.

Hence we may assume that $\dim S=0$.
Assume that $-(K_{X}+D)\cdot C>0$ for some curve $C$.
Let $\rho\colon Y\to X$ be the minimal resolution of $X$ and let $D_Y$ be an effective divisor defined as $K_{Y}+D_{Y}=\rho^{*}(K_X+D)$.
By the Bend-and-Break theorem ({\cite[Theorem~1.13]{KoMo98}}, see also {\cite[Theorem~3.7]{Tan14}}), applied to the proper transform of $C$,
there exists a rational curve $Z$ on $Y$ such that $-K_{Y}\cdot Z\le 3$ and $-(K_{Y}+D_{Y})\cdot Z>0$.
Set $C':=\rho_{*}Z$.
Then, 
$$
-(K_{X}+D)\cdot C'=-(K_{Y}+D_{Y})\cdot Z>0.
$$
It remains to show that $C'=C_{t}$ for some $t\in T$.
To this end, let $D'\in \mathcal{P}$ be arbitrary, and define $D'_{Y}$ as  $K_{Y}+D'_{Y}=\rho^{*}(K_X+D')$.
If $C'=D_{j}$ for some $j$, then
$$
-(K_X+D')\cdot C'\le \max{\{ -(K_X+D_j)\cdot D_j, -K_{X}\cdot D_j\}}.
$$
If $C'\neq D_j$ for any $j$, then
$$
-(K_X+D')\cdot C'=-(K_{Y}+D'_{Y})\cdot Z\le -K_{Y}\cdot Z\le 3.
$$
In either case, the length bound in the definition of $\{C_t\}_{t\in T}$ shows that $C'=C_t$ for some $t\in T$.
\end{proof}

Thus, the resulting rational polytope $\mathcal{N}$ consists of all $D\in \mathcal{P}$ such that $K_{X}+D$ is $\pi$-nef, and then contains $\Delta$. Hence, writing $\Delta$ as a convex combination of rational points of $\mathcal{N}$, we obtain
$$
\Delta=\sum_i r_i\Delta_i,\quad r_i\ge 0,\quad \sum_i r_i=1,
$$
where each $\Delta_i$ is a boundary $\mathbb{Q}$-divisor and $K_X+\Delta_i$ is $\Q$-Cartier and $\pi$-nef. 
The abundance theorem for $\mathbb{Q}$-divisors therefore applies to each $\Delta_i$, which yields the desired statement for $\Delta$.
\end{proof}

\section{Examples} \label{sec: example}
In this section, we provide concrete examples showing that the assumptions in Theorem~\ref{thm: abundance} are natural.
In this section, suppose that the base field $k$ is not the algebraic closure of a finite field.

\begin{ex}
By {\cite[Example~A.3]{Tan14}} (resp.\  {\cite[Example~4.28]{Tan14}} or {\cite[Example~6.1]{Has25}}), for any $\varepsilon>0$, there exist a smooth projective surface $X$ and an effective $\Q$-divisor $\Delta$ on $X$ such that $(1-\varepsilon)\Delta$ is a boundary divisor, while $\Delta$ itself is not a boundary, and $K_{X}+\Delta$ is nef with $\kappa(X, K_{X}+\Delta)=2$ (resp.\ $\kappa(X, K_{X}+\Delta)=0$), but not semiample.
These examples show that the assumption that $\Delta$ is a boundary is essential in Theorems~\ref{thm: log CY} and \ref{thm: abundance big case}.
\end{ex}

\begin{ex} \label{ex: Mor26pre}
Recall the example constructed in \cite[Example~4.5]{Sak87}. Note that this example is also treated including the case of positive characteristic in \cite{Mor26pre}.
Let $p\colon Y_0=\mathbb{P}_{B}(\mathcal{E})\to B$ be a $\mathbb{P}^1$-bundle over an elliptic curve $B$ associated with the decomposable vector bundle $\mathcal{E}=\mathcal{O}_{B}\oplus \mathcal{O}_{B}(\mathfrak{e})$, where $\mathfrak{e}$ is a non-torsion divisor of degree $0$.
Let $T_1=T_{\mathcal{E}}$ and $T_2\sim T_{\mathcal{E}}-p^{*}\mathfrak{e}$ be the two sections of $p$.
We consider the composition of two blow-ups 
$$
\varphi\colon Y=Y_{2}\xrightarrow{\varphi_{2}}Y_{1}\xrightarrow{\varphi_{1}} Y_0
$$
defined as follows.
Let $\varphi_{1}$ be the blow-up at a point $P_1\in T_{1}$, and denote its exceptional curve by $E_1$.
Let $\varphi_{2}$ be the blow-up at a point $P_{2}\in E_{1}$ which does not lie on the proper transform of $T_{1}$.
We continue to denote by $T_1$ and $E_1$ their proper transforms on $Y$.
By {\cite[Lemmas~4.2 and 4.3]{Mor26pre}}, there exists a projective contraction $\rho\colon Y\to X$ onto a normal projective non-$\Q$-Gorentein surface $X$ which contracts precisely $T_1$ and $E_1$.
By the standard calculation of the Mumford pullback, we have 
$$
K_{Y}+2T_1+E_1=\rho^{*}K_{X}.
$$
On the other hand, 
$$
K_{Y}+2T_1+E_1=\varphi^{*}(K_{Y_0}+2T_1)\sim p^{*}\mathfrak{e}.
$$
Since $\mathfrak{e}$ is non-torsion,
 $K_{X}$ is numerically trivial but not $\Q$-linearly equivalent to any effective $\Q$-divisor.
Thus, the examples $(X, 0)$ and $(K_{Y_0}, 2T_1)$ show, respectively, that the $\Q$-Cartierness condition and the boundary condition in Theorem~\ref{thm: non-vanishing} cannot be omitted.

Moreover, by {\cite[Lemmas~4.4]{Mor26pre}}, the blow-ups $\varphi_{i}$ can be chosen so that there exist a normal projective $\Q$-Gorenstein surface $X'$ and a $K_{X'}$-negative extremal contraction $\psi\colon X'\to X$.
In particular, $K_{X'}$ is $\Q$-Cartier and pseudo-effective but not $\Q$-linearly equivalent to any effective $\Q$-divisor.
Thus, the non-vanishing theorem fails for log surfaces in full generality.
\end{ex}

\bibliographystyle{amsalpha}
\bibliography{ref}

\providecommand{\bysame}{\leavevmode\hbox to3em{\hrulefill}\thinspace}
\providecommand{\MR}{\relax\ifhmode\unskip\space\fi MR }
% \MRhref is called by the amsart/book/proc definition of \MR.
\providecommand{\MRhref}[2]{%
  \href{http://www.ams.org/mathscinet-getitem?mr=#1}{#2}
}
\providecommand{\href}[2]{#2}
\begin{thebibliography}{KMM94}

\bibitem[Ati57]{Ati57}
M.~F. Atiyah, \emph{Vector bundles over an elliptic curve}, Proc. London Math. Soc. (3) \textbf{7} (1957), 414--452. \MR{131423}

\bibitem[B{\u{a}}d01]{Bad01}
Lucian B{\u{a}}descu, \emph{Algebraic surfaces}, Universitext, Springer-Verlag, New York, 2001, Translated from the 1981 Romanian original by Vladimir Ma\c sek and revised by the author. \MR{1805816}

\bibitem[Bir11]{Bir11}
Caucher Birkar, \emph{On existence of log minimal models {II}}, J. Reine Angew. Math. \textbf{658} (2011), 99--113. \MR{2831514}

\bibitem[BLR90]{BLR90}
Siegfried Bosch, Werner L\"utkebohmert, and Michel Raynaud, \emph{N\'eron models}, Ergebnisse der Mathematik und ihrer Grenzgebiete (3) [Results in Mathematics and Related Areas (3)], vol.~21, Springer-Verlag, Berlin, 1990. \MR{1045822}

\bibitem[Eno23]{Eno23}
Makoto Enokizono, \emph{Vanishing theorems and adjoint linear systems on normal surfaces in positive characteristic}, Pacific J. Math. \textbf{324} (2023), no.~1, 71--110. \MR{4604667}

\bibitem[Eno24]{Eno24}
\bysame, \emph{An integral version of {Z}ariski decompositions on normal surfaces}, Eur. J. Math. \textbf{10} (2024), no.~2, Paper No. 38, 50. \MR{4754327}

\bibitem[FM92]{FoMc92}
Lung-Ying Fong and James M\textsuperscript{c}Kernan, \emph{Log abundance for surfaces}, Flips and abundance for algebraic threefolds (J{\'a}nos Koll{\'a}r, ed.), Ast{\'e}risque, no. 211, Soci{\'e}t{\'e} Math{\'e}matique de France, 1992, pp.~127--137.

\bibitem[Fuj84]{Fuj84}
Takao Fujita, \emph{Fractionally logarithmic canonical rings of algebraic surfaces}, J. Fac. Sci. Univ. Tokyo Sect. IA Math. \textbf{30} (1984), no.~3, 685--696. \MR{731524}

\bibitem[Fuj12]{Fuj12}
Osamu Fujino, \emph{Minimal model theory for log surfaces}, Publ. Res. Inst. Math. Sci. \textbf{48} (2012), no.~2, 339--371. \MR{2928144}

\bibitem[GP96]{GP96}
Francisco~Javier Gallego and B.~P. Purnaprajna, \emph{Normal presentation on elliptic ruled surfaces}, J. Algebra \textbf{186} (1996), no.~2, 597--625. \MR{1423277}

\bibitem[Har77]{Har77}
Robin Hartshorne, \emph{Algebraic geometry}, Graduate Texts in Mathematics, vol. No. 52, Springer-Verlag, New York-Heidelberg, 1977. \MR{463157}

\bibitem[Has25]{Has25}
Kenta Hashizume, \emph{Minimal model program for normal pairs along log canonical locus}, Forum Math. Sigma \textbf{13} (2025), Paper No. e143, 71. \MR{4958721}

\bibitem[KM98]{KoMo98}
J\'anos Koll\'ar and Shigefumi Mori, \emph{Birational geometry of algebraic varieties}, Cambridge Tracts in Mathematics, vol. 134, Cambridge University Press, Cambridge, 1998, With the collaboration of C. H. Clemens and A. Corti, Translated from the 1998 Japanese original. \MR{1658959}

\bibitem[KMM94]{KMM94}
Sean Keel, Kenji Matsuki, and James M\textsuperscript{c}Kernan, \emph{Log abundance theorem for threefolds}, Duke Math. J. \textbf{75} (1994), no.~1, 99--119. \MR{1284817}

\bibitem[Kon10]{Kon10}
Kazuhiro Konno, \emph{Chain-connected component decomposition of curves on surfaces}, J. Math. Soc. Japan \textbf{62} (2010), no.~2, 467--486. \MR{2662852}

\bibitem[Mor26a]{Mor26pre}
Nao Moriyama, \emph{A note on $\mathbb{Q}$-{G}orenstein surfaces}, 2026, preprint.

\bibitem[Mor26b]{Mor26}
\bysame, \emph{Remarks on the minimal model theory for log surfaces in the analytic setting}, Nagoya Math. J. \textbf{261} (2026), Paper No. e27, 20. \MR{5057703}

\bibitem[Sak87]{Sak87}
Fumio Sakai, \emph{Classification of normal surfaces}, Algebraic geometry, {B}owdoin, 1985 ({B}runswick, {M}aine, 1985), Proc. Sympos. Pure Math., vol. 46, Part 1, Amer. Math. Soc., Providence, RI, 1987, pp.~451--465. \MR{927967}

\bibitem[Tan14]{Tan14}
Hiromu Tanaka, \emph{Minimal models and abundance for positive characteristic log surfaces}, Nagoya Math. J. \textbf{216} (2014), 1--70. \MR{3319838}

\end{thebibliography}

\end{document}